\newtheorem{theorem}{Theorem}[section]
\newtheorem{lemma}[theorem]{Lemma}
\newtheorem{proposition}[theorem]{Proposition}
\newtheorem{assumption}[theorem]{Assumption}
\theoremstyle{definition}
\newtheorem{defn}[theorem]{Definition}
\theoremstyle{remark}
\newtheorem*{remark}{Remark}
\numberwithin{equation}{section}
\crefname{example}{Example}{Examples}
\newcommand{\R}{\ensuremath{\mathbb{R}}}
\newcommand{\norm}[1]{\ensuremath\left\|#1\right\|}
\renewcommand{\geq}{\geqslant}
\renewcommand{\leq}{\leqslant}
\providecommand{\keywords}[1]
{
  \textbf{Keywords---} #1
}
\begin{document}
\title{An SPDE with Robin-type boundary for a system of elastically killed diffusions on the positive half-line}
\author{Ben Hambly\footnote{Mathematical Institute, University of Oxford, Email: hambly@maths.ox.ac.uk} , Julian Meier\footnote{Mathematical Institute, University of Oxford, Email: julian.meier@maths.ox.ac.uk (corresponding author)} , and Andreas S{\o}jmark\footnote{Department of Statistics, London School of Economics, Email: a.sojmark@lse.ac.uk}} 
\date{\today} 

\maketitle

\begin{abstract}
 We consider a system of particles undergoing correlated diffusion with elastic boundary conditions on the half-line. By taking the large particle limit we establish existence and uniqueness for the limiting empirical measure valued process for the surviving particles. This process can be viewed as the weak form for an SPDE with a noisy Robin boundary condition satisfied by the particle density. We establish results on the $L^2$-regularity properties of this density process, showing that it is well behaved in the interior of the domain but may exhibit singularities on the boundary at a dense set of times.
We also show existence of limit points for the empirical measure in the non-linear case where the particles have a measure dependent drift. We make connections for our linear problem to the corresponding absorbing and reflecting SPDEs, as the elastic parameter takes its extreme values.  
\end{abstract}

\keywords{Particle System, Common Noise, Mean-field type SPDE, Elastic killing, Robin boundary}

\section{Introduction}

Consider a conditionally i.i.d.~system of $N $ reflected It{\^o} diffusions $\{X^i\}_{1\leq i \leq N}$ living on the positive half-line $[0,\infty)$ with a given finite time horizon $T$. The 
initial values $\{X_0^{i}\}_{1 \leq i \leq N}$ are chosen independently from 
a common distribution $\nu_0$ on $(0,\infty)$, and each $X^i$ has diffusive dynamics
\begin{align} \label{particle SDE}
\begin{split}
    X_t^{i} &= X_0^i + \int_0^t \mu(s,X_s^{i})ds + \int_0^t \sigma(s,X_s^{i}) \rho(s) dW_s^0 \\
    &+ \int_0^t \sigma(s,X_s^{i})(1- \rho(s)^2)^{\frac{1}{2}}dW_s^i+L_t^i, \quad i=1,\ldots, n,
\end{split}
\end{align}
where $L^i$ is the local time of $X^i$ at the origin and $W^0,W^1,\ldots,W^N$ are independent Brownian motions. The precise assumptions on the coefficients are left for Section 2.

We shall also refer to each $X^i$ as the $i$'th particle. On top of the dynamics \eqref{particle SDE}, we wish to consider the `killing' of each particle after a suitable (random) amount of time has been spent at the boundary. To this end, we fix a parameter $\kappa >0$ and define
\begin{equation}\label{eq:default|_time}
\tau^i \coloneqq \inf\{t>0 : L_t^i > \chi^i \},\quad i=1,\ldots,n,
\end{equation}
for a family $(\chi^i)_{1 \leq i \leq N}$ of i.i.d.~exponential random variables with rate $\kappa>0$. This captures the idea of each particle being killed elastically at the origin with parameter $\kappa >0$, and we say $\tau^i$ is the elastic killing time of $X^i$. Note that the measure $dL^i_t$, induced by the local time, is supported on the set $\{t:X_t^i = 0 \}$, so we know that $X_{\tau^i}^i=0$, meaning that each particle can only be killed when it is at the boundary.
 
The main object of study in this paper is the family of empirical measures for the surviving particles, for every time $t$ and any number of particles $N$,
given by

\begin{equation}\label{eq:empirical_measures}
\nu_t^N(dx) := \frac{1}{N} \sum_{i=1}^N \delta_{X_t^i}(dx) \mathbbm{1}_{t<\tau^i},\quad t\in[0,T],\quad N\geq 1.
\end{equation}
A similar particle system with elastic killing but without a common noise has been studied in \cite{SojmarkWorkingPaper22} to model an epidemic advancing through a population.

In financial applications, $\tau^i$ could represent the \emph{default time} of an entity whose \emph{financial health} is modelled by $X^i$. We then say that an entity is in \emph{distress} when $X^i_t$ is zero. In practice, an entity could for example be an asset such as a credit instrument forming part of a larger credit default obligation or a defaultable loan in a mortgage backed security. It could also represent an entire company or a financial institution. Either way, if enough time is spent in this distressed state, the inevitable will happen: the clock $\tau^i$ rings and the $i$'th entity is declared to be in default. Due to the structure of \eqref{eq:default|_time}, where the underlying exponential random variables are not observed, we obtain the desirable properties that the default times are not predictable and their laws are memoryless.

In order to mange the risk of investments and regulate or monitor a financial system, it becomes important to understand, respectively, the pricing of credit derivatives on portfolios of a large number of defaultable entities or the computation of risk measures related to the overall health of a large number of financial institutions. Both cases will typically involve the computation of expected values $\mathbb{E}[F(\nu^N)]$ for some functional $F$ of the paths $t\mapsto \nu_t^N$ up until a terminal time $T$. Viewing $\nu^N:=(\nu_t^N)_{t\geq 0}$ as a measure-valued c\`adl\`ag stochastic process, it is then natural to look for a functional limit theorem as $N\rightarrow \infty$ and thereby seek to identify a good approximation $\mathbb{E}[F(\nu)]$, where the limit $\nu=(\nu_t)_{t\geq 0}$ satisfies an evolution equation driven only by the common factor $W^0$.

Closely related to our setting, \cite{HamblyetAl201} provides such a result motivated by the pricing of credit default swap indices, which involves computing the expected value of suitable functionals of the loss process 
$t\mapsto \mathcal{L}^N_t := 1- \nu^N_t(0,\infty)$, 
giving the proportion of defaults.
However, \cite{HamblyetAl201} does not work with elastic default times for reflected dynamics as we do here. Instead, they consider a constant coefficient version of the system \eqref{particle SDE} without reflection, where defaults are declared to happen immediately at the first time the financial health $X^i$ hits zero. The paper \cite{Ledger2014} studies precise regularity properties of solutions to the resulting SPDE with absorbing boundary satisfied by $\nu$ (in the limit as $N\rightarrow \infty$). Moreover, \cite{Giles_Reisinger, Wang} have proposed and analysed efficient numerical schemes for simulating this SPDE, based on finite difference schemes in combination with multi-level or multi-index Monte Carlo methods, which can yield substantial gains over the simulation of the finite system for reasonable sizes of $N$. Finally, \cite{HamblyLedger2017, HamblySojmark2019} have considered the well-posedness of broader classes of such parabolic SPDEs with absorbing boundary and \cite{AhmadHamblyLedger2018} considers an SPDE with an absorbing boundary on a compact interval which is applied to the pricing of Mortgage Backed Securities.

In a class of sufficiently regular solutions, uniqueness of the SPDE with absorbing boundary in \cite{HamblyetAl201} can be deduced from the general Sobolev theory on Dirichlet problems for SPDEs developed in \cite{Krylov1994, Krylov1998}. As far as we are aware, no such theory exists for the type of SPDEs with reflecting or elastic boundaries that we consider in the present paper. For details on the specific type, see \eqref{eq:classical_SPDE}-\eqref{eq:classical_SPDE_BC} below with $\kappa=0$ or $\kappa>0$, respectively.

In \cite{HamblyetAl201}, adapting the approach of \cite{Kotelenez, Kurtz_Xiong} for the whole space to a half-line with absorbing boundary, uniqueness of a priori measure-valued solutions is proved by means of uniform $L^2$ energy estimates for suitable mollifications of the solutions. This method only works because the authors can quantify the second moment of the mass near the boundary as having decay of at least order $\mathbb{E}[\int_0^T|\nu_t(0,\varepsilon)|^2dt]=O(\varepsilon^{3+\gamma})$, as $\varepsilon \downarrow 0$,  for some $\gamma>0$. Even with constant coefficients except for a time dependent correlation $t\mapsto \rho(t)$, it becomes unclear how to obtain such a strong order of decay. In \cite{HamblyLedger2017}, it was instead realised that, by working in the dual, $H^{-1}$, of the first Sobolev space, $H^1$, uniqueness in the case of an absorbing boundary can be obtained from energy estimates in $H^{-1}$ as soon as we know that the first moment of the mass near the boundary satisfies $\mathbb{E}[\int_0^T|\nu_t(0,\varepsilon)|dt]=O(\varepsilon^{1+\gamma})$, as $\varepsilon \downarrow 0$,  for some $\gamma>0$, which is very easy to verify for the limit points of the empirical measures in the absorbing case.

The aforementioned first moment control, however, is much too strong a requirement for elastic and reflecting boundaries. Indeed, the absolute best one could hope for is that $\mathbb{E}[\int_0^T|\nu_t(0,\varepsilon)|dt]$ is of order $O(\varepsilon)$ as $\varepsilon \downarrow 0$. Fortunately, returning instead to work with the second moment, we are able to implement the $H^{-1}$ approach from \cite{HamblyLedger2017} by showing that limit points of \eqref{eq:empirical_measures} must satisfy $\mathbb{E}[\int_0^T|\nu_t(0,\varepsilon)|^2dt]=O(\varepsilon^{1+\gamma})$, as $\varepsilon \downarrow 0$,  for some $\gamma>0$. This is enough for us to succeed in establishing uniqueness within a broad class of measure-valued solutions that includes the limit points of the empirical measures.

\subsection{Summary of main results}

By analogy with the connection between an elastically killed Brownian motion and the heat equation with a convective Robin boundary, one heuristically expects limit points of \eqref{eq:empirical_measures} to take the form of a stochastic process $(V_t)_{t\geq0}$, where each $V_t$ is a random sub-probability density on $[0,\infty)$ governed, in a suitable sense, by the evolution equation
\begin{equation}\label{eq:classical_SPDE}
dV_t =   \bigl(  \partial_{xx} \bigl( \frac{\sigma^2_t}{2} V_t\bigr)  - \partial_x ( \mu_t  V_t)  \bigr)dt  - \rho_t  \partial_x \left( \sigma_t V_t \right) dW^0_t,
\end{equation}
in the interior, together with the noisy Robin-type boundary condition
\begin{equation}\label{eq:classical_SPDE_BC}
\partial_x \bigl(\frac{\sigma^2(\cdot,t)}{2}  V_t \bigr)(0) = \kappa \frac{\sigma^2(t,0)}{2}V_t(0) + \mu(t,0) V_t(0) +\rho(t)\sigma(t,0)V_t(0) \Dot{W}^0_t.
\end{equation}
Note that, to simplify notation, we will often suppress the spatial dependence and simply write $\mu_t = \mu(t,x)$ and $\sigma_t = \sigma(t,x)$ as well as $\rho_t = \rho(t)$.


Our main contributions in this paper are as follows. Firstly, we identify a suitable  weak formulation of the SPDE \eqref{eq:classical_SPDE}-\eqref{eq:classical_SPDE_BC}, which is shown to uniquely characterize a functional limit theorem for the empirical measures \eqref{eq:empirical_measures} as $N\rightarrow \infty$ (Theorems \ref{existenceThm} and \ref{uniqueness}). Secondly, we show that the unique solution $\nu$ of our weak formulation has a density $V_t$ at all times, that this density is square-integrable for almost all times, and that it is square-integrable away from the boundary at all times (Theorem \ref{thm:L2-density} and Proposition \ref{prop:Burdzy}). Thirdly, we connect the study of absorbing, reflecting, and elastic boundaries within the class of linear parabolic SPDEs on a half-line analysed here. We do this by showing that well-posedness with a reflecting boundary is also captured by our arguments (Theorem \ref{UniquenessReflecting}) and that the cases of a reflecting or an absorbing boundary emerge as limiting cases when sending the parameter of elastic killing $\kappa$ to $0$ or $+\infty$, respectively (Theorem \ref{CaseConvergenceThm}). Finally, towards the end of the paper, we show that our arguments for identifying a limiting SPDE extend to drifts depending continuously on the empirical measures (Theorem \ref{NonLinExistenceThm}), and we show that the resulting limiting solutions have McKean--Vlasov type probabilistic representations (Theorem \ref{probrep}). However, it does not seem straightforward for our $H^{-1}$ approach to yield uniqueness in that setting.

\subsection{Making sense of the elastic boundary at the origin}\label{sect:elastic_intro}

In this section, we present some heuristic arguments that motivate our notion of solution in Section 2.1 and highlight the reasons for working with a relaxed interpretation of the boundary condition \eqref{eq:classical_SPDE_BC}.

If we had a smooth noise $W^0$ and \eqref{eq:classical_SPDE}-\eqref{eq:classical_SPDE_BC} hold in the classical sense, then they imply a loss of mass at the rate
\begin{equation}\label{eq:loss_mass_classical}
\frac{d}{dt}\nu_t([0,\infty)) =  - \kappa \frac{\sigma^2(t,0)}{2} V_t(0), \quad \text{for all} \quad t> 0,
\end{equation}
almost surely, after integration by parts. This reveals a natural and slightly weaker way of imposing the elastic boundary condition without asking for spatial differentiability.

Nonetheless, we will have to drop the differentiability in time implied by \eqref{eq:loss_mass_classical}, and, moreover, we will have to avoid evaluating the solution at the boundary, for reasons we discuss below. Thus, rather than taking \eqref{eq:loss_mass_classical} as a definition we derive a relaxed version for the case where the noise $W^0$ is a Brownian motion.  Consider the density $G^{E,\kappa}_\varepsilon(x,y)$, at time $\varepsilon>0$, of a Brownian motion on $[0,\infty)$ started at $y\geq 0$ and killed elastically at $0$ with rate $\kappa>0$ (see \eqref{ElasticKernel}). Then $y\mapsto G_{\varepsilon}^{E,\kappa}(0,y)$ approximates a Dirac mass at $0$ from the right, tending to infinity at $y=0$, while the function
\begin{equation} \label{elasticTestFunc}
\phi^{E,\kappa}_\varepsilon(x) := \int_0^{\infty}  G^{E,\kappa}_{\varepsilon}(x,y)\mathbbm{1}_{[0,\infty)}(y)dy = \int_0^{\infty}  G^{E,\kappa}_{\varepsilon}(x,y)dy.
\end{equation}
defines a particular smooth approximation of $ \mathbbm{1}_{[0,\infty)}$ which is close to $1$ at $x=0$. In Theorem \ref{MeasureSPDE}, we introduce a succinct weak formulation of \eqref{eq:classical_SPDE}-\eqref{eq:classical_SPDE_BC} that will hold for the limit points of our particle system. As we will see in Section \ref{subsec:weak_boundary}, a key step in our uniqueness proof is that these weak solutions can be shown to satisfy 
\begin{equation}\label{eq:weak_BC}
d\langle \nu_t, \phi^{E,\kappa}_{\varepsilon} \rangle  = - \kappa \langle \nu_t,\frac{\sigma_t^2}{2}G_{\varepsilon}^{E,\kappa}(0,\cdot)\rangle dt +  \langle \nu_t,\mu_t \bar{g}_{\varepsilon}^{E,\kappa}\rangle dt + \langle \nu_t,\rho_t \sigma_t \bar{g}_{\varepsilon}^{E,\kappa}\rangle dW_t^0
\end{equation}
for any $\varepsilon>0$, for a suitable correction function  $\bar{g}_{\varepsilon}^{E,\kappa}:=g_\varepsilon^{E,\kappa}(0,\cdot)$ defined in \eqref{ElasticKernel}.
\begin{remark}
Through out we use the notation $\langle \xi, \phi \rangle$ for the application of a distribution $\xi$ to the test function $\phi$. In the case where $\xi$ is a measure this represents the integration of the test function against the measure.
\end{remark}
Equation \eqref{eq:weak_BC} may be viewed as a relaxed version of \eqref{eq:loss_mass_classical}, where we have shifted attention from the boundary to arbitrarily small neighbourhoods of the boundary, at the cost of introducing two correction terms: an absolutely continuous term due to the drift $\mu$ and a local martingale term due to the driving noise $W^0$. As per the arguments in Section \ref{subsec:weak_boundary}, we can take an expectation and send $\varepsilon$ to zero, to obtain
\begin{equation}\label{eq:loss_mass_expectation}
\mathbb{E}[\nu_t([0,\infty))]  = 1 - \lim_{\varepsilon\downarrow 0}  \int_0^t \kappa  \mathbb{E}\bigl[\langle \nu_s,\frac{\sigma_s^2}{2}G_{\varepsilon}^{E,\kappa}(0,\cdot)\rangle\bigr] ds.
\end{equation}
Therefore, we may instructively think of \eqref{eq:weak_BC} as enforcing \eqref{eq:loss_mass_classical} in a generalized sense in time and space, subject to taking expectations, when we assume nothing about the existence or regularity of a density or the noise being smooth.

While this is a much weaker formulation of the boundary condition \eqref{eq:classical_SPDE_BC} than \eqref{eq:loss_mass_classical}, it will be enough for us to carry through our uniqueness proof, as we see in Section \ref{subsec:uniqueness_proof}. Besides allowing for uniqueness, the relaxed formulation \eqref{eq:weak_BC} is crucial for two reasons. Firstly, we need a sufficiently relaxed formulation so that it can be guaranteed to be satisfied by weak limit points of the particle system, for which it would seem intractable to ask for much more than the above.  Secondly, even if we had more precise knowledge of the density for the limiting solutions, it would not be possible to have \eqref{eq:loss_mass_classical} holding for all $t\in[0,T]$ with probability 1. As it turns out, for each realisation, the density can blow up as we approach the boundary for certain times (albeit a set of times of measure zero). We discuss this further in Section 2.1 together with our main results.

Intuitively, suppose $dW^0_t$ is badly behaved at some time $t$, pushing a non-negligible amount of mass towards the boundary in an infinitesimal amount of time while not pushing any mass the other way (due to a Brownian path for which $B_s-B_t<0$ for all $s$ in some small right-neighbourhood of $t$). Since only a fraction of this mass leaves the domain, the rest must be instantaneously accommodated for near the boundary, which may then force $\limsup_{\varepsilon\downarrow 0}\langle \nu_t , G^{E,\kappa}_\varepsilon(0,\cdot)\rangle=+\infty$. In particular, this will also imply non-differentiability of the loss of mass at that time. As the paths $t\mapsto \nu_t([0,\infty))$ are decreasing, they are automatically differentiable at almost every time, but, for any given realisation, there can still be a dense set of times of measure zero where this fails for the aforementioned reasons.

\subsection{Literature on SPDEs with noisy boundary conditions}\label{intro:existing_litt}

Our focus in this paper is on identifying the limit of the empirical measures \eqref{eq:empirical_measures} as the unique solution of a suitable weak formulation of the SPDE \eqref{eq:classical_SPDE}-\eqref{eq:classical_SPDE_BC}. Working with the relaxed condition \eqref{eq:weak_BC}, we will make no attempt at understanding \eqref{eq:classical_SPDE_BC} in a stronger sense. Nevertheless, it is worth emphasising that there are several results in the literature on SPDEs with irregular noise terms in the boundary condition.

Al{\`o}s and Bonaccorsi \cite{Alos2002} study SPDEs with a white noise Dirichlet boundary condition on the half-line, using the theory of fundamental solutions for linear stochastic evolution equations from \cite{Nualart2000} and techniques from Malliavin calculus. They prove existence of solutions in a weighted $L^p$ space and determine the blow-up rate at the boundary. Da Prato and Zabczyk \cite{DaPratoZabczyk1993} study nonlinear stochastic evolution equations with white noise boundary conditions. They employ a stochastic version of the semigroup approach developed in \cite{Balakrishnan} to show global existence and uniqueness of mild solutions. Moreover, DaPrato and Zabczyk establish that the solution in the Neumann case exhibits a higher regularity near the boundary. Maslowski \cite{Maslowski1995} uses similar techniques based on the semigroup approach to analyse SPDEs on a bounded domain driven by space-dependent Gaussian noise and Robin-type boundary condition with bounded operators and independent noise at the boundary and shows existence and uniqueness of mild solutions. Sowers \cite{Sowers1994} studies stochastic reaction diffusion equations on an $n$-dimensional manifold with additive noise in the boundary conditions showing existence and uniqueness of solutions. In work on stochastic dynamical boundary conditions, Chueshow and Schmalfus \cite{ChueshovSchmalfuss04} \cite{ChueshovSchmalfuss07} show well-posedness of a system of quasi-linear parabolic SPDEs on bounded domains with noisy boundary dynamics, and verify that solutions give rise to a random dynamical system. In more recent work, Shirikyan \cite{Shirikyan21} studies 2d Navier--Stokes systems on a bounded domain driven by boundary noise with a piecewise independent structure.

The SPDE  \eqref{eq:classical_SPDE}-\eqref{eq:classical_SPDE_BC}  differs from these problems in a number of ways. The equation involves a stochastic transport term where the derivative of the solution appears in front of the Brownian motion. Moreover, the boundary noise is the same as the noise driving the equation in the interior while most of the literature is focused on the case of an independent noise at the boundary. 

\section{Convergence to an SPDE with elastic boundary}

To have a unique strong solution of the finite particle system and establish the desired results on convergence and well-posedness of the limit, we impose the following assumptions on the initial condition and the coefficients of the system.

\begin{assumption}[Initial Condition]\label{AssumptionInitial}
 The initial condition $\nu_0$ is supported on $(0, \infty)$, and satisfies
    \begin{equation*}
        \nu_0(\lambda, \infty) = o(\exp(-\alpha \lambda)) \quad \text{ as } \lambda \to + \infty,
    \end{equation*}
    \noindent
    for every $\alpha >0$. Furthermore, we assume that $\nu_0$ has an $L^2$-density $V_0$.
\end{assumption}

\begin{assumption}[Regularity in Space and Time] \label{AssumptionSpace} 
The coefficients $\mu$, $\sigma$ and $\rho$ have the following regularity in the space and time variables 
\begin{enumerate}
     \item  The maps  $x \mapsto \mu(t,x)$ and  $x \mapsto\sigma(t,x)$ are in $C^2([0,\infty))$ with
    \begin{equation*}
        \lvert \partial_x^n \mu(t,x) \rvert, \lvert \partial_x^n \sigma(t,x) \rvert \leq C_{\sigma,\mu}
    \end{equation*}
    \noindent
    for all $t \in [0,T], x \in \R$ and $n= 0,1,2$, for a constant $C_{\sigma,\mu}>0$,
    \item  For all $t \in [0,T], x \in \R$, $\sigma(t,x) \geq C^{-1}_{\sigma,\mu} > 0$ and $ 0 \leq \rho(t) < 1$,
    \item  The map $t \mapsto \sigma(t,x)$ is in $C^1([0,T])$ and  $\sup_{s \in [0,T]} \int_0^{\infty} \lvert \partial_t \sigma(s,y) \rvert dy < \infty$.
\end{enumerate}
\end{assumption}

Given these assumptions, strong existence and uniqueness for the system \eqref{particle SDE} follows by classical results. 
Next, we define a class of processes capturing the fundamental regularity conditions satisfied by limit points of the empirical measure processes. Our uniqueness result will be for solutions to the limiting SPDE belonging to this class.

\begin{defn}[The class $\Lambda$] \label{regularityClass}
We say that a distribution-valued c\`adl\`ag process $(\nu_t)_{t \in [0,T]}$ is of class $\Lambda$ if $\nu$ takes values in the space of sub-probability measures $\mathbf{M}_{\leq 1} (\R)$ and the following conditions are satisfied:
\begin{enumerate}
    \item \label{assptSupp} (Support on positive half-line) For every $t \in [0,T]$, the sub-probability measure $\nu_t$ is supported on the positive half-line $[0,\infty)$, \\[-15pt]
    \item \label{assptTail} (Exponential tails) For every $\alpha >0$, we have 
    \begin{equation*}
        \mathbb{E} \left[ \int_0^T \nu_t (\lambda,+ \infty) dt \right] = o(e^{- \alpha \lambda}), \quad \text{as } \lambda \to \infty, 
    \end{equation*}
    \item \label{assptBound} (Boundary decay) There exists $\gamma>0$ such that
    \begin{equation*}
        \mathbb{E} \left[ \int_0^T (\nu_t(0,\varepsilon))^2 dt \right] = O(\varepsilon^{1+\gamma}), \quad \text{as } \varepsilon \to 0,
    \end{equation*}
    \item \label{assptSpat} (Spatial concentration) There exist constants $C>0$ and $\delta \in (0,1)$  such that
    \begin{equation*}
        \mathbb{E}  \left[ \int_0^T \nu_t(a,b) dt \right] \leq C \lvert b-a \rvert^{\delta}, \quad \text{for all pairs } 0< a < b.
    \end{equation*}
\end{enumerate}

\end{defn}

\subsection{Functional convergence and well-posedness of the SPDE}

As discussed in the introduction, we are interested in a functional limit theorem for the empirical measures $\nu^N=(\nu^N_t)_{t\in[0,T]}$ seen as c\`adl\`ag stochastic processes. To this end, we proceed as in \cite{HamblyLedger2017, HamblySojmark2019, Ledger2016} and establish weak convergence on the Skorokhod space of $\mathcal{S}^\prime$-valued c\`adl\`ag paths, denoted $D_{\mathcal{S}^\prime}=D_{\mathcal{S}^\prime}[0,T]$. As usual, $\mathcal{S}^\prime$ is the space of tempered distributions, forming the dual of $\mathcal{S}$, the space of Schwarz functions on $\mathbb{R}$. As per \cite{Ledger2016}, we endow $D_{\mathcal{S}^\prime}$ with Skorokhod's M1 topology and the corresponding Borel $\sigma$-field.

\begin{theorem}[Functional limit theorem]\label{existenceThm}
Let $\nu^N=(\nu^N_t)_{t\in[0,T]}$ be given by \eqref{eq:empirical_measures} with Assumptions \ref{AssumptionInitial}-\ref{AssumptionSpace} in place. Every subsequence of $(\nu^N,W^0)$ has a further subsequence converging in law on $(D_{\mathcal{S}^{\prime}},\text{M1}) \times (C_{\R},\norm{}_{\infty})$. Moreover, for any limit point $(\nu,W^0)$, the marginal $\nu$ is in the class $\Lambda$ and there is a filtration $\mathcal{F}^{\nu,W^0}$, for which the marginal $W^0$ is a Brownian motion and $\nu$ is adapted, so that the pair $(\nu,W^0)$ satisfies the SPDE
\begin{align} \label{MeasureSPDE}
    \begin{split}
        \langle \nu_t,\phi \rangle = \langle \nu_0, \phi \rangle &+ \int_0^t  \langle \nu_s,\mu(s,\cdot) \phi^\prime \rangle ds + \frac{1}{2} \int_0^t \langle \nu_s, \sigma^2(s,\cdot)  \phi^{\prime \prime} \rangle ds\\
        &+ \int_0^t \langle \nu_s, \rho(s) \sigma(s,\cdot) \phi^\prime \rangle dW_s^0
    \end{split}
\end{align}
for all times $t \in [0,T]$ and all test functions $\phi \in \mathcal{C}^{E,\kappa}_0(\mathbb{R})$, with probability 1, where
\[
   \mathcal{C}^{E,\kappa}_0(\mathbb{R}) = \{\phi \in \mathcal{S}: \partial_x \phi(0) = \kappa \phi(0)  \}.
\]
\end{theorem}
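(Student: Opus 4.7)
The plan is to start from a pre-limit SPDE satisfied by $\nu^N$, establish tightness of $(\nu^N, W^0)$ and pass to the limit to identify the SPDE, then verify the class $\Lambda$ properties of the limit.

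First I would derive the pre-limit SPDE by applying It\^o--Tanaka to $\phi(X^i_t)\mathbbm{1}_{\{t<\tau^i\}}$ for $\phi \in \mathcal{C}^{E,\kappa}_0(\mathbb{R})$. The reflection at the origin produces a local-time contribution $\phi'(0)\,dL^i_t$, while the elastic killing is a pure-jump process whose compensator is $\kappa\phi(0)\,dL^i_t$. The Robin condition $\phi'(0)=\kappa\phi(0)$ built into $\mathcal{C}^{E,\kappa}_0(\mathbb{R})$ causes these two local-time terms to cancel exactly; this is the structural reason for the choice of test-function class. Averaging over $i$ and splitting the correlated noise into common and idiosyncratic parts gives the right-hand side of \eqref{MeasureSPDE} for $\nu^N$ plus a martingale residual $M^{N,\phi}$ driven by $W^1,\dots,W^N$ with quadratic variation of order $1/N$.

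For tightness of $(\nu^N, W^0)$ in $(D_{\mathcal{S}'}, \mathrm{M1})\times(C_\mathbb{R}, \|\cdot\|_\infty)$, I would use the Mitoma/Jakubowski criterion, reducing it to tightness of $\langle\nu^N, \phi\rangle$ in $D_\mathbb{R}[0,T]$ for each $\phi$ in a countable dense subset of $\mathcal{S}$; the pre-limit SPDE with an Aldous-type modulus-of-continuity estimate then suffices, noting that $\mathrm{M1}$ (rather than $\mathrm{J1}$) is needed to accommodate the atoms of size $\phi(0)/N$ that appear at each killing time and accumulate in the limit. Passing to the limit on an extended Skorokhod space is then routine: the drift and $W^0$-integral terms are continuous in the appropriate topology (after a standard truncation), $M^{N,\phi}$ vanishes in $L^2$ by its variance bound, and $\langle \nu_0^N, \phi\rangle \to \langle\nu_0,\phi\rangle$ by the strong law of large numbers. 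The marginal $\nu$ is automatically adapted to $\mathcal{F}^{\nu,W^0}$, and $W^0$ remains a Brownian motion there by L\'evy's characterization.

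Finally I would verify $\nu \in \Lambda$. The support condition and the exponential tails \eqref{assptTail} follow from $X^i\geq 0$ and sub-Gaussian bounds on $\sup_{t\leq T}X^i_t$ via Assumption \ref{AssumptionInitial} and BDG. The spatial concentration \eqref{assptSpat} follows from Girsanov reduction to a reflected Brownian motion and standard H\"older estimates on the associated heat kernel. The main obstacle is the boundary decay \eqref{assptBound}. Writing
\[
\mathbb{E}\Bigl[\int_0^T (\nu^N_t(0,\varepsilon))^2\,dt\Bigr] = \frac{1}{N^2}\sum_{i,j=1}^N\int_0^T\mathbb{E}\bigl[q^i_t(\varepsilon;W^0)\,q^j_t(\varepsilon;W^0)\bigr]\,dt,
\]
with $q^i_t(\varepsilon;W^0):=\mathbb{P}(X^i_t\in(0,\varepsilon),\,t<\tau^i\mid W^0)$, the conditional i.i.d.\ structure of the particles given $W^0$ factorises the off-diagonal terms as $\mathbb{E}[q^i_t(\varepsilon;W^0)^2]$. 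I would estimate $q^i_t(\varepsilon;W^0)$ by interpolating between an $L^2$-bound $q^i_t\lesssim \sqrt{\varepsilon}\,\|V_0\|_{L^2}$ (from contractivity of the conditional elastic semigroup in $L^2$) and a pointwise Gaussian-kernel bound $q^i_t\lesssim \varepsilon$ valid for $t$ away from the initial layer, yielding an $L^2$-in-time bound of order $\varepsilon^{(1+\gamma)/2}$ for some $\gamma>0$. Squaring produces $O(\varepsilon^{1+\gamma})$ on the off-diagonals and $O(\varepsilon/N)$ on the diagonal, establishing \eqref{assptBound} at the particle level, and the bound transfers to $\nu$ via mollification of $\mathbbm{1}_{(0,\varepsilon)}$ by a smooth bump together with Fatou's lemma.
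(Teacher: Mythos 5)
Your overall architecture matches the paper's: the pre-limit evolution equation obtained from It\^o's formula, with the local-time term $\phi'(0)\,dL^i$ offset against the compensator $\kappa\phi(0)\,dL^i_{t\wedge\tau^i}$ of the killing indicator thanks to the Robin condition on the test functions (leaving the martingale residual that the paper calls $J^N(\phi)$, plus the idiosyncratic noise term $I^N(\phi)$); M1 tightness via reduction to the real-valued processes $\langle\nu^N,\phi\rangle$ and the monotone loss component; identification of the limit; and verification of the class $\Lambda$ via scale transformation and Girsanov. Two points, however, deserve scrutiny. The minor one: the $W^0$-stochastic-integral term is not a continuous functional of the paths $(\nu^N,W^0)$, so ``continuity after truncation'' does not suffice; the paper circumvents this by the martingale-problem formulation (showing $M^\phi$, $S^\phi$, $C^\phi$ are martingales for the limit, which identifies the quadratic variation and the covariation with $W^0$). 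This is fixable along standard lines, so I would not call it a gap by itself.

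The genuine gap is in your argument for the boundary decay \eqref{assptBound}. The conditional factorisation $\mathbb{E}[q^i_tq^j_t]=\mathbb{E}[(q^1_t)^2]$ is correct, but both bounds you propose for $q^1_t(\varepsilon;W^0)=\mathbb{P}(X^1_t\in(0,\varepsilon),\,t<\tau^1\mid W^0)$ are unavailable. The ``contractivity of the conditional elastic semigroup in $L^2$'' would require a pointwise-in-time bound $\Vert V_t\Vert_{L^2(0,\infty)}\lesssim\Vert V_0\Vert_{L^2}$ for the conditional density given $W^0$ --- but that density is precisely the SPDE solution whose $L^2$ regularity is one of the paper's main theorems, is proved \emph{after} (and using) the class-$\Lambda$ boundary decay, and is only obtained in time-integrated form; the paper moreover conjectures that uniform $L^2$ control up to the boundary fails. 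The argument is therefore circular. The pointwise bound $q^1_t\lesssim\varepsilon$ is worse: it amounts to boundedness of the conditional density near the origin for fixed $t$ and a.e.\ $W^0$, and the paper explicitly argues (via the Burdzy et al.\ results on Brownian motion reflected off a Brownian path) that $\limsup_{x\downarrow 0}V_t(x)=+\infty$ on a dense set of times, so no such bound holds. The paper's route avoids conditioning on $W^0$ altogether: it estimates the \emph{unconditional} two-particle probability $\mathbb{P}(0<X^i_t<\varepsilon,\,0<X^j_t<\varepsilon)$ directly, reducing via the scale transformation and a multidimensional Girsanov change of measure to two reflected Brownian motions with correlation $\rho$, $|\rho|<1$, whose joint density is the explicit folded bivariate Gaussian; this gives a bound of order $\varepsilon^2/(t\sqrt{1-\rho^2})$, hence $t^{-1/q}O(\varepsilon^{2/q})$ after H\"older, and choosing $q>1$ close to $1$ yields $O(\varepsilon^{1+\gamma})$ with an integrable time singularity. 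You would need to replace your conditional-density estimates with such a joint two-particle computation (or an equivalent device exploiting $|\rho|<1$) for the boundary decay to go through.
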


Our next result establishes uniqueness for measure-valued solutions to the SPDE \eqref{MeasureSPDE} in the regularity class $\Lambda$. In particular, we thus obtain the full convergence in law of the empirical measure processes to the unique solution of \eqref{MeasureSPDE} in this class.

\begin{theorem}[Uniqueness] \label{uniqueness}
Let $(\nu,W^0)$ and $(\tilde{\nu},\tilde{W}^0)$ be solutions to the SPDE \eqref{MeasureSPDE} such that $\nu$ and $\tilde{\nu}$ are in the class $\Lambda$, for Brownian motions $W^0$ and $\tilde{W}^0$. Then, we have
\begin{enumerate}
    \item pathwise uniqueness if $W^0=\tilde{W}^0$, that is,
    \begin{equation}
        \nu_t(S) = \tilde{\nu}_t(S) \quad \text{ for every } t \in [0,T] \text{ and every Borel sets } S \subseteq \R.
    \end{equation}
    \item uniqueness in law, that is, $\mathrm{Law}((\nu,W^0))= \mathrm{Law}((\tilde{\nu},\tilde{W}^0))$.
\end{enumerate}
\end{theorem}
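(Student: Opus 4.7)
My plan is to implement the $H^{-1}$-type energy estimate strategy of \cite{HamblyLedger2017}, now adapted to the elastic boundary by taking the elastic heat kernel $G^{E,\kappa}_{\varepsilon}$ as the mollifier. The key point is that $y \mapsto G^{E,\kappa}_{\varepsilon}(x,y)$ satisfies the Robin condition $\partial_y G^{E,\kappa}_{\varepsilon}(x,0) = \kappa G^{E,\kappa}_{\varepsilon}(x,0)$ for every $x$, so it is an admissible test function (up to a harmless Schwartz cut-off at infinity, justified via the tail condition (ii) of class $\Lambda$). Setting $V_t^{\varepsilon}(x) := \langle \nu_t, G^{E,\kappa}_{\varepsilon}(x,\cdot) \rangle$ and $\tilde V_t^{\varepsilon}(x) := \langle \tilde\nu_t, G^{E,\kappa}_{\varepsilon}(x,\cdot) \rangle$, the weak equation \eqref{MeasureSPDE} translates into genuine It\^o evolution equations in $L^2(0,\infty)$, driven by the common Brownian motion $W^0$ under the hypothesis of statement (i).

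For pathwise uniqueness I would then apply It\^o's formula to $\norm{U_t^{\varepsilon}}_{L^2}^2$, where $U_t^{\varepsilon} := V_t^{\varepsilon} - \tilde V_t^{\varepsilon}$, noting that $U_0^{\varepsilon} \equiv 0$ since both solutions share the initial datum $\nu_0$. The $dt$-drift splits into three contributions: a dissipative diffusion/transport term which, after integration by parts and using the bounds on $\mu$ and $\sigma$ from Assumption \ref{AssumptionSpace}, is controlled by $C \, \norm{U_t^{\varepsilon}}_{L^2}^2$; a martingale term of zero expectation; and boundary correction terms arising because the elastic mollification commutes only approximately with the generator. Taking expectations and applying Gr\"onwall in $t$ reduces the problem to showing that these boundary corrections vanish, in expectation and uniformly in $t \in [0,T]$, as $\varepsilon \downarrow 0$.

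The main obstacle, and the reason the second-moment bound (iii) of Definition \ref{regularityClass} is required here rather than the first-moment version used in \cite{HamblyLedger2017}, lies in controlling these boundary corrections. Each such term is bilinear in the values of $V_t^{\varepsilon}, \tilde V_t^{\varepsilon}$ in a neighbourhood of $x=0$, and these values are heuristically of order $\nu_t(0,\varepsilon)\cdot\varepsilon^{-1/2}$ because $G^{E,\kappa}_{\varepsilon}(0,\cdot)$ has an $\varepsilon^{-1/2}$ singularity at zero. Squaring then produces an $\varepsilon^{-1}$ factor, so one needs $\mathbb{E}\int_0^T \nu_t(0,\varepsilon)^2 \, dt = o(\varepsilon)$, which is precisely what (iii) supplies with room $\varepsilon^{\gamma}$ to spare; the tail and spatial concentration conditions (ii) and (iv) dispose of the complementary far-field and interior remainders. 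From the resulting $L^2$-convergence, testing against a countable dense family of continuous functions yields $\nu_t(S) = \tilde\nu_t(S)$ almost surely for every $t$ and every Borel $S$, i.e.\ (i). For uniqueness in law (ii), weak existence of a solution in $\Lambda$ is guaranteed by Theorem \ref{existenceThm}, so a Yamada--Watanabe argument in its measure-valued form (as employed in \cite{HamblyLedger2017, HamblySojmark2019}), combined with the pathwise uniqueness just established, yields equality of laws.
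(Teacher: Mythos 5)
Your overall architecture (mollify with the elastic kernel, apply It\^o's formula, Gr\"onwall, send $\varepsilon\downarrow 0$) is the right one, but the choice of energy functional is where the argument breaks. You propose to run the estimate on $\norm{U_t^{\varepsilon}}_{L^2}^2$ with $U_t^\varepsilon = T^{E,\kappa}_\varepsilon(\nu_t-\tilde\nu_t)$, i.e.\ the direct $L^2$ scheme of \cite{HamblyetAl201}. The boundary terms this produces after integration by parts are proportional to $(T^{E,\kappa}_\varepsilon\Delta_t(0))^2$, with prefactors involving $\mu(0)$, $\partial_x\sigma(0)$ and $\kappa$ that have no definite sign in general. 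Since $G^{E,\kappa}_\varepsilon$ has variance $\varepsilon$, hence bandwidth $\sqrt\varepsilon$, one only has $\lvert T^{E,\kappa}_\varepsilon\Delta_t(0)\rvert \leq C\varepsilon^{-1/2}\bigl(\Delta_t(0,\varepsilon^\eta)+\exp(-\varepsilon^{2\eta-1}/2)\bigr)$ for $\eta<1/2$, so the squared boundary term is of order $\varepsilon^{-1}\nu_t(0,\varepsilon^\eta)^2$ with $\eta<1/2$; condition (iii) of Definition \ref{regularityClass} then gives $\mathbb{E}\int_0^T\nu_t(0,\varepsilon^\eta)^2dt = O(\varepsilon^{\eta(1+\gamma)})$, which beats $\varepsilon^{-1}$ only if $\gamma>1$. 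Your heuristic ``squaring produces $\varepsilon^{-1}$, so $o(\varepsilon)$ suffices and (iii) supplies $\varepsilon^{1+\gamma}$'' implicitly takes the relevant mass window to be $(0,\varepsilon)$ rather than $(0,\sqrt\varepsilon)$; with the correct window the class-$\Lambda$ decay is insufficient. This is exactly the obstruction flagged in the introduction: the direct $L^2$ route of \cite{HamblyetAl201} needs boundary decay of order $\varepsilon^{3+\gamma}$ and is therefore abandoned here.

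The paper instead performs the energy estimate on the anti-derivative $\partial_x^{-1}T^{E,\kappa}_\varepsilon\Delta_t$, i.e.\ an $H^{-1}$-type estimate via Lemma \ref{H1lemma}. Two things are gained. First, the boundary evaluation $\partial_x^{-1}T^{E,\kappa}_\varepsilon\Delta_t(0)$ is bounded by the total mass $\lvert\Delta_t(0,\infty)\rvert\le 2$, so the boundary term from integration by parts, $\partial_x^{-1}T^{E,\kappa}_\varepsilon\Delta_s(0)\cdot\sigma_s^2(0)\,T^{E,\kappa}_\varepsilon\Delta_s(0)$, carries only one singular factor. Second, and crucially, this term is not merely estimated: by the weak Robin boundary identity \eqref{eq:weak_BC} (Proposition \ref{prop:aprrox_BC}), the time integral $\int_0^s\sigma_u^2(0)T^{E,\kappa}_\varepsilon\Delta_u(0)\,du$ equals $\frac{2}{\kappa}\partial_x^{-1}T^{E,\kappa}_\varepsilon\Delta_s(0)$ up to corrections controlled by $\nu_s(0,\varepsilon^{1/4})$ with no $\varepsilon^{-1/2}$ prefactor, whence the entire boundary contribution is identified as the \emph{negative} quantity $-\frac1\kappa\mathbb{E}\bigl[(\partial_x^{-1}T^{E,\kappa}_\varepsilon\Delta_t(0))^2\bigr]+o(1)$ (Lemma \ref{IBPBoundary}) and is moved to the left-hand side of the Gr\"onwall inequality. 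Only in this formulation does $O(\varepsilon^{1+\gamma})$ with an arbitrary $\gamma>0$ suffice. A second, smaller gap: $L^2$ control on $(0,\infty)$ alone cannot rule out that $\nu_t$ and $\tilde\nu_t$ differ by an atom at the origin (the solutions are a priori only sub-probability measures); the estimate \eqref{H-1Estimate} handles this through the extra term $\lvert\int_0^\infty T^{E,\kappa}_\varepsilon\Delta_t(y)\,dy\rvert$, which forces the total masses, and hence any atoms at $0$, to coincide. Your plan would need a separate argument for this point as well.
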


So far, our existence and uniqueness statements have been phrased solely in terms of properties of measure-valued solutions. Our third result concerns the extent to which we can guarantee $L^2$-regularity of the unique solution to the SPDE in the class $\Lambda$.

\begin{theorem}[$L^2$-regularity]\label{thm:L2-density}
	Let $(\nu,W^0)$ be the unique solution to the SPDE \eqref{MeasureSPDE} in the class $\Lambda$. With probability 1, it holds for all $t\in[0,T]$ that the measure $\nu_t$ restricted to $(0,\infty)$ has a density $V_t$, which is in $L^2$ away from the origin. Furthermore, it holds for almost every $t\in[0,T]$ that $V_t$ is the density of $\nu_t$ on all of $[0,\infty)$, and we have
	\[
	\mathbb{E}\Bigl[\int_0^T \Vert V_t \Vert_{L^2(0,\infty)}dt \Bigr]  <\infty.
	\]
\end{theorem}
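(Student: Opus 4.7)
The plan is to mollify the measure $\nu_t$ against the elastic heat kernel $G^{E,\kappa}_\varepsilon$, which is tailored to the Robin boundary condition, and extract uniform-in-$\varepsilon$ $L^2$ estimates via an It\^o energy argument before passing to the limit $\varepsilon\downarrow 0$. For $\varepsilon>0$ and $x\geq 0$, set $V^\varepsilon_t(x):=\langle \nu_t, G^{E,\kappa}_\varepsilon(x,\cdot)\rangle$. Because $G^{E,\kappa}_\varepsilon(x,\cdot)$ is smooth, rapidly decaying in $y$, and satisfies $\partial_y G^{E,\kappa}_\varepsilon(x,0)=\kappa G^{E,\kappa}_\varepsilon(x,0)$, it fits (after a mild truncation/extension) into the test function class $\mathcal{C}^{E,\kappa}_0(\mathbb{R})$, so that \eqref{MeasureSPDE} yields a stochastic evolution for $V^\varepsilon_t(x)$ at every fixed $x\geq 0$. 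The semigroup identity $\int_0^\infty G^{E,\kappa}_\varepsilon(x,y) G^{E,\kappa}_\varepsilon(x,z)\,dx = G^{E,\kappa}_{2\varepsilon}(y,z)$ allows us to rewrite $\|V^\varepsilon_t\|_{L^2(0,\infty)}^2$ as $\langle \nu_t\otimes \nu_t, G^{E,\kappa}_{2\varepsilon}\rangle$, making the quantity amenable to a bilinear It\^o expansion in the spirit of Kotelenez/Kurtz--Xiong.

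Applying It\^o's formula and integrating by parts, using the symmetry $G^{E,\kappa}_\varepsilon(x,y) = G^{E,\kappa}_\varepsilon(y,x)$ together with the Robin condition inherited by $V^\varepsilon_t$ in $x$, produces an identity whose left-hand side carries the coercive term $\int_0^t\!\int_0^\infty (1-\rho_s^2)\sigma_s^2|\partial_x V^\varepsilon_s|^2\,dx\,ds$, and whose boundary contributions reduce either to multiples of $\kappa \sigma^2(s,0) V^\varepsilon_s(0)^2$ or to martingale terms involving $V^\varepsilon_s(0)$. Taking expectations, absorbing the coercive term on the left, and applying Gronwall together with the quadratic boundary decay \ref{assptBound} and the spatial concentration \ref{assptSpat} of class $\Lambda$, I obtain $\sup_{\varepsilon>0}\mathbb{E}[\int_0^T \|V^\varepsilon_t\|^2_{L^2(0,\infty)}\,dt] < \infty$. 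Weak compactness in $L^2([0,T]\times\Omega; L^2(0,\infty))$ extracts a (sub)sequential limit $V$; testing against $\phi\in C_c^\infty(0,\infty)$ identifies $V_t\,dx$ with $\nu_t|_{(0,\infty)}$ for a.e.\ $t\in[0,T]$, and condition \ref{assptBound} further forces $\nu_t(\{0\}) = 0$ for a.e.\ $t$, giving the a.e.-in-$t$ statement on $[0,\infty)$. The $L^1$-in-time bound then follows by Cauchy--Schwarz.

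For the pointwise-in-$t$ claim restricted away from $0$, fix $\eta>0$ and pick a smooth cutoff $\chi_\eta$ supported in $[\eta,\infty)$ with $\chi_\eta\equiv 1$ on $[2\eta,\infty)$. Since $\chi_\eta$ vanishes near the origin, testing \eqref{MeasureSPDE} against products of $\chi_\eta$ with a standard symmetric mollifier on $\mathbb{R}$ (all of which lie trivially in $\mathcal{C}^{E,\kappa}_0(\mathbb{R})$) removes all boundary issues. A standard interior energy estimate, in the style of \cite{HamblyLedger2017}, then yields an $L^2(2\eta,\infty)$ density at every $t\in[0,T]$ almost surely, and exhausting by $\eta\downarrow 0$ delivers the first claim.

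The chief technical obstacle is the boundary-martingale control in the energy identity: the stochastic transport term $-\rho_s \partial_x(\sigma_s V_s)\,dW^0_s$ generates a boundary martingale whose quadratic variation scales like $(V^\varepsilon_s(0))^4$, and taming this uniformly in $\varepsilon$ is precisely what demands the \emph{quadratic} second-moment boundary decay built into class $\Lambda$. This is the step that forces the present approach to break away from the first-moment control sufficient in the absorbing case of \cite{HamblyLedger2017}. A subsidiary delicacy is that one cannot hope for the pointwise-in-$t$ density to be square-integrable down to $x=0$: as foreshadowed in the introduction, for each realisation there can be a dense measure-zero set of times at which the density blows up near the boundary, which is exactly why the pointwise statement must be restricted to be away from the origin.
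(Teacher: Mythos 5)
Your overall architecture (mollify with the elastic kernel, extract a uniform-in-$\varepsilon$ $L^2$ bound, pass to a weak limit, and run a separate cut-off interior estimate for the all-$t$ statement away from the origin) matches the paper, and your third paragraph is essentially the paper's Proposition \ref{DensityInterior}. However, the central step --- the uniform bound $\sup_{\varepsilon>0}\mathbb{E}[\int_0^T\Vert V^\varepsilon_t\Vert^2_{L^2(0,\infty)}dt]<\infty$ obtained from a \emph{direct} $L^2$ energy identity up to the boundary --- has a genuine gap. Your integration by parts produces boundary contributions quadratic in $V^\varepsilon_s(0)=T^{E,\kappa}_\varepsilon\nu_s(0)$: the drift $\mu$ yields a term $\mu(s,0)V^\varepsilon_s(0)^2\,ds$ with no sign, and the transport noise yields $\rho_s\sigma(s,0)V^\varepsilon_s(0)^2\,dW^0_s$. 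Since $G^{E,\kappa}_\varepsilon(0,y)\asymp \varepsilon^{-1/2}$ on $(0,\sqrt{\varepsilon})$, one has $V^\varepsilon_s(0)^2\gtrsim \varepsilon^{-1}\nu_s(0,\sqrt{\varepsilon})^2$, so the class-$\Lambda$ boundary decay \ref{assptBound} only gives $\mathbb{E}[\int_0^T V^\varepsilon_s(0)^2ds]=O(\varepsilon^{(\gamma-1)/2})$, which \emph{diverges} for $\gamma<1$ --- and $\gamma<1$ is all that Proposition \ref{boundaryEst} supplies (there $\gamma=2/q-1$ with $q>1$). Worse, your stated resolution --- that the second-moment decay tames a boundary martingale whose quadratic variation scales like $(V^\varepsilon_s(0))^4$ --- cannot work: class $\Lambda$ contains no fourth-moment boundary condition at all, and a second-moment bound does not control a fourth moment. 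The paper in fact expects $\limsup_{x\downarrow 0}V_t(x)=+\infty$ on a dense set of times, so $V^\varepsilon_s(0)$ is genuinely out of control and must not appear in the estimate.

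The paper avoids this by never running the energy estimate at the $L^2$ level near the boundary. The $L^2$ bound of Lemma \ref{IntegratedSmoothedL2} is obtained as the \emph{retained coercive term} $-c_0\,\mathbb{E}[\int_0^t\Vert T^{E,\kappa}_\varepsilon\nu_s\Vert_2^2ds]$ inside the $H^{-1}$ energy estimate of the uniqueness proof, carried out for the anti-derivative $\partial_x^{-1}T^{E,\kappa}_\varepsilon\nu$ as in \eqref{GronwallSetup}. At that level the boundary value is $\partial_x^{-1}T^{E,\kappa}_\varepsilon\nu_s(0)=-\int_0^\infty T^{E,\kappa}_\varepsilon\nu_s(y)\,dy$, trivially bounded by the total mass, and the one dangerous cross term $\partial_x^{-1}T^{E,\kappa}_\varepsilon\nu_s(0)\cdot T^{E,\kappa}_\varepsilon\nu_s(0)$ is converted into the good-signed quantity $-\tfrac1\kappa(\partial_x^{-1}T^{E,\kappa}_\varepsilon\nu_t(0))^2+o(1)$ via the weak Robin boundary identity \eqref{eq:weak_BC} (Proposition \ref{prop:aprrox_BC} and Lemma \ref{IBPBoundary}). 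This is the idea your proposal is missing, and without it the up-to-the-boundary estimate does not close. Two smaller points: condition \ref{assptBound} concerns the open interval $(0,\varepsilon)$ and so does not by itself force $\nu_t(\{0\})=0$; the absence of an atom for a.e.\ $t$ instead follows from the uniform $L^2$ bound itself (an atom of mass $c$ would make $\Vert T^{E,\kappa}_\varepsilon\nu_t\Vert_2^2\gtrsim c^2\varepsilon^{-1/2}$). And to get the interior density for \emph{all} $t$ rather than a.e.\ $t$ you need $\mathbb{E}[\sup_{t}\Vert\psi T^{E,\kappa}_\varepsilon\nu_t\Vert_2^2]$ uniformly in $\varepsilon$, i.e.\ a BDG control of the localized stochastic integrals, which your sketch leaves implicit.
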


The above theorem highlights two interesting issues. Firstly, for a set of times of measure zero, there could fail to be a density of $\nu_t(\omega)$ on $[0,\infty)$. Secondly, even if we have a density for all times, it could fail to be in $L^2$ up to the boundary. The former turns out to not be an issue, as results of \cite{Burdzy2003} on Brownian motion reflected off of a path of another Brownian motion will guarantee that there cannot be a point mass at zero. Moreover, based on the results of \cite{Burdzy2002}, we should expect to have $\limsup_{x\downarrow 0}V_t(x)=+\infty$ for a dense set of times. In line with this, we believe it is unlikely that $L^2$ integrability of the density can be guaranteed all the way up to the boundary for those times.

\begin{proposition}[Density on all of $[0,\infty)$]\label{prop:Burdzy}
	With probability 1, there is no atom of $\nu_t$ at the origin, for any $t\in[0,T]$. Consequently, each $V_t$ from Theorem \ref{thm:L2-density} is in $L^1(0,\infty)$ and gives the density of $\nu_t$ on $[0,\infty)$ for all $t\in[0,T]$, with probability 1.
\end{proposition}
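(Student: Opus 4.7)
The plan is to combine a conditional McKean--Vlasov representation of $\nu$ (available in our linear setting thanks to pathwise uniqueness in Theorem \ref{uniqueness}) with fine regularity results for reflecting diffusions from \cite{Burdzy2003}. Let $X$ be the strong solution of the reflecting SDE \eqref{particle SDE} driven by $W^0$ and an independent Brownian motion $W$, with $X_0\sim\nu_0$, killed at the elastic time $\tau$ built from an independent exponential clock. A direct adaptation of the proof of Theorem \ref{existenceThm}, applied to the law of the single particle $X$ in place of the empirical measure $\nu^N$, shows that $t\mapsto\mathbb{P}(X_t\in\cdot,\ t<\tau\mid\mathcal{F}^{W^0}_t)$ is a process in the class $\Lambda$ solving the SPDE \eqref{MeasureSPDE} with the same $W^0$; pathwise uniqueness from Theorem \ref{uniqueness} then identifies this conditional sub-probability law with $\nu_t$ almost surely.

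Given this representation, showing $\nu_t(\{0\})=0$ for every $t\in[0,T]$ almost surely reduces to establishing the corresponding statement for the conditional law of $X_t\1_{t<\tau}$ given $W^0$. Conditionally on a realisation of $W^0$, the process $X$ solves a one-dimensional time-inhomogeneous reflected SDE on $[0,\infty)$ with non-degenerate idiosyncratic drive $\sigma(s,X_s)\sqrt{1-\rho(s)^2}\,dW_s$, a bounded drift from $\mu$, an additional continuous random drift $\sigma(s,X_s)\rho(s)\,dW^0_s$, and elastic killing at the origin. Using the boundedness and strict positivity of $\sigma$ from Assumption \ref{AssumptionSpace}, together with $\rho<1$, one can reduce $X$ (conditional on $W^0$) via a deterministic space--time change and a Girsanov removal of the bounded drift to a reflecting diffusion whose reflecting boundary is the continuous moving curve generated by pushing $W^0$ through the same transformation.

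The resulting object is precisely of the type studied in \cite{Burdzy2003}: a standard Brownian-type process reflected off the path of another Brownian motion. The main theorem there provides a conditional sub-probability transition density that is continuous in $(t,x)$ up to the reflecting curve, valid for all $t\in(0,T]$ simultaneously, almost surely. In particular, this conditional law puts no mass on the moving boundary at any $t>0$. Transporting the statement back through our change of variables gives $\mathbb{P}(X_t=0,\ t<\tau\mid W^0)=0$ for every $t$ with probability one, so that $\nu_t(\{0\})=0$ for every $t\in[0,T]$ almost surely. Combined with Theorem \ref{thm:L2-density}, this forces the density $V_t$ to represent $\nu_t$ on the whole of $[0,\infty)$, and the sub-probability property of $\nu_t$ immediately yields $V_t\in L^1(0,\infty)$ with $\|V_t\|_{L^1}\leq 1$.

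The main obstacle is upgrading from an \emph{almost every} $t$ statement, which would follow essentially from class-$\Lambda$ boundary decay and Theorem \ref{thm:L2-density} alone, to an \emph{every} $t$ statement with probability one. The c\`adl\`ag regularity of the measure-valued process $\nu$ does not by itself exclude the sudden appearance of boundary atoms at exceptional times, so the joint $(t,x)$-continuity of the conditional density from \cite{Burdzy2003} is essential; that continuity in turn relies on the strict non-degeneracy assumption $\rho<1$, which guarantees a non-trivial idiosyncratic Brownian drive even after conditioning on the common noise $W^0$.
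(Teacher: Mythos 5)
Your overall architecture --- represent $\nu_t$ as the conditional law $\mathbb{P}(X_t\in\cdot,\,t<\tau\mid\mathcal{F}^{W^0}_t)$ of a single killed reflecting diffusion (the paper does this via Theorem \ref{probrep}), transform away $\sigma$ with the scale map of Lemma \ref{Scale}, and then invoke \cite{Burdzy2003} for Brownian motion reflected off a Brownian path --- is exactly the paper's. However, there is a genuine gap in the key citation. You claim that \cite{Burdzy2003} yields a conditional transition density that is \emph{continuous in $(t,x)$ up to the reflecting curve, for all $t$ simultaneously}. This is false, and the paper itself explains why: by \cite{Burdzy2002}, when the moving boundary is a Brownian path the density blows up at the boundary on a dense set of times, and \cite{Burdzy2003} shows that atoms \emph{can} occur for boundaries in the upper class of Brownian motion. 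What \cite{Burdzy2003} actually provides (Theorems 2.2 and 2.5) is the strictly weaker statement that there is \emph{no atom} at the boundary precisely when the boundary is not in the upper class; this is the delicate criterion the proof must verify, and your argument never engages with it. (It is also the reason Theorem \ref{thm:L2-density} only asserts $L^2$ regularity away from the origin.)

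Second, your Girsanov removal of the drift is not justified in the conditional setting. The exponential martingale is driven by $B=\rho W^0+\sqrt{1-\rho^2}\,W^1$, so under the new measure $W^0$ is no longer a Brownian motion and is no longer independent of the idiosyncratic noise; the clean picture of a Brownian motion reflected off the path $-\rho W^0$ is destroyed by the very change of measure meant to produce it, and the statement required is about the conditional law given $W^0$ for \emph{every} $t$ simultaneously. The paper avoids this entirely with a pathwise comparison: since $\tilde{\mu}$ is bounded, $\int_0^t\tilde{\mu}_s\,ds\geq -Ct$, so the event that the reflected process sits at the origin is contained in the corresponding event for the process with linear drift $-Ct$; one then checks that $f(t)=Ct$ is not in the upper class of Brownian motion (since $f(t)/\sqrt{t}$ is non-decreasing, \cite[p.144]{Knight1981}) and applies \cite[Theorems 2.2 and 2.5]{Burdzy2003} to that dominated process. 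Your final step, deducing from the absence of an atom that $V_t$ represents $\nu_t$ on all of $[0,\infty)$ and lies in $L^1(0,\infty)$, is fine.
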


While all our other results are proved in a self-contained manner, using mollification and energy estimates, we were not able to exploit this technique to rule out atoms at the origin, and hence a result of 
\cite{Burdzy2003} is needed for the above proposition. We do not use this proposition for any of our other results, but we believe it is important to emphasise that there is indeed a density on all of $[0,\infty)$. In particular, we note that our proof of Theorem \ref{existenceThm} deals explicitly with the a priori possibility of having atoms at zero.

\subsection{Reflecting and absorbing boundaries as limiting cases}

The elastic boundary condition in the SPDE \eqref{MeasureSPDE} is encoded in the space of test functions. Changing the space of test functions, we in turn obtain an evolution equation with a different boundary behaviour. Specifically, the space of test functions
\begin{equation}
\mathcal{C}^{A}_0(\mathbb{R}) \coloneqq \{\phi \in \mathcal{S}(\R) : \phi(0)=0 \}.
\end{equation}
corresponds to an absorbing boundary, while the space of test functions
\begin{equation}
    \mathcal{C}^{R}_0(\mathbb{R}) \coloneqq \{ \phi \in \mathcal{S}(\R) : \partial_x \phi (0) = 0 \}
\end{equation}
captures a reflecting boundary. The well-posedness of \eqref{MeasureSPDE} with a reflecting boundary can be inferred from our proofs in this paper, and so we also get the following result.

\begin{theorem} \label{UniquenessReflecting}
The SPDE \eqref{MeasureSPDE} with a reflecting boundary at zero, captured by the test function space $\mathcal{C}^R_0(\R)$, has a unique solution in the class $\Lambda$, as in Theorem \ref{uniqueness}. 
\end{theorem}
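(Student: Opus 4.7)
The plan is to specialize the elastic case of Theorems \ref{existenceThm} and \ref{uniqueness} to the reflecting setting by taking $\kappa = 0$ in all kernel-based constructions. The Neumann heat kernel on $[0,\infty)$,
\[
G^R_\varepsilon(x,y) = \frac{1}{\sqrt{2\pi\varepsilon}}\bigl(e^{-(x-y)^2/(2\varepsilon)} + e^{-(x+y)^2/(2\varepsilon)}\bigr),
\]
coincides with $G^{E,\kappa}_\varepsilon$ evaluated at $\kappa = 0$, and the corresponding approximation $\phi^R_\varepsilon(x) := \int_0^\infty G^R_\varepsilon(x,y)dy$ of $\mathbf{1}_{[0,\infty)}$ satisfies $\partial_x \phi^R_\varepsilon(0) = 0$, placing it in $\mathcal{C}^R_0(\R)$.

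For existence, I would repeat the tightness and identification arguments of Theorem \ref{existenceThm}, replacing the elastically killed particle system with the particles from \eqref{particle SDE} subject to pure reflection (dropping the killing times $\tau^i$). The tightness estimates rely only on the smoothness of test functions and the regularity of coefficients, not on the specific boundary mechanism, so they carry over verbatim. Identification of limit points as solving \eqref{MeasureSPDE} against test functions in $\mathcal{C}^R_0(\R)$ follows from It\^o's formula at the particle level: the local-time contributions $dL^i_t$ integrate against $\partial_x\phi(0) = 0$ and thus vanish, leaving precisely the equation displayed in \eqref{MeasureSPDE}. Verification that limit points lie in class $\Lambda$—support, exponential tails, second-moment boundary decay, and spatial concentration—uses the same a priori estimates as in the elastic case; the boundary decay bound $\mathbb{E}[\int_0^T \nu_t(0,\varepsilon)^2 dt] = O(\varepsilon^{1+\gamma})$ is in fact easier here since no mass is lost.

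For uniqueness, given two solutions $\nu, \tilde\nu$ in class $\Lambda$, I would follow the $H^{-1}$ strategy of the elastic uniqueness proof applied to the mollifications $\nu^\varepsilon_t(x) := \langle \nu_t, G^R_\varepsilon(x,\cdot)\rangle$ and $\tilde\nu^\varepsilon_t$ defined analogously. Each solution will satisfy a relaxed boundary identity
\[
d\langle \nu_t, \phi^R_\varepsilon\rangle = \langle \nu_t, \mu_t \bar g^R_\varepsilon\rangle dt + \langle \nu_t, \rho_t \sigma_t \bar g^R_\varepsilon\rangle dW^0_t,
\]
which is exactly \eqref{eq:weak_BC} with the killing term omitted. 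Applying It\^o's formula to $\Vert \nu^\varepsilon_t - \tilde\nu^\varepsilon_t \Vert_{H^{-1}}^2$, I would aim for an energy estimate
\[
\mathbb{E}\Vert \nu^\varepsilon_t - \tilde\nu^\varepsilon_t \Vert_{H^{-1}}^2 \le C \int_0^t \mathbb{E}\Vert \nu^\varepsilon_s - \tilde\nu^\varepsilon_s \Vert_{H^{-1}}^2 ds + R_\varepsilon(t),
\]
with $R_\varepsilon(t) \to 0$ as $\varepsilon \downarrow 0$ thanks to the second-moment boundary decay of class $\Lambda$. Gronwall's inequality followed by the limit $\varepsilon \to 0$ then yields pathwise uniqueness, and uniqueness in law follows by a Yamada--Watanabe type transfer as in Theorem \ref{uniqueness}(ii).

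The main obstacle is controlling the boundary correction terms that arise in the It\^o expansion of the $H^{-1}$ norm. However, because the killing contribution is absent in the reflecting case, the analysis is strictly lighter than for the elastic SPDE: the only residual boundary terms come from the drift correction $\mu_t \bar g^R_\varepsilon$ and the stochastic correction $\rho_t \sigma_t \bar g^R_\varepsilon$, both of which are localized to an $\varepsilon$-neighbourhood of the origin and are therefore dominated by $\nu_t(0,C\varepsilon^{1/2})$ up to bounded factors, so they vanish in expectation by the $O(\varepsilon^{1+\gamma})$ decay assumption exactly as in the elastic argument.
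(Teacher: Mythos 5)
Your overall strategy -- existence via the reflected particle system and tightness, uniqueness via kernel smoothing with $G^R_\varepsilon=G^{E,0}_\varepsilon$ and an $H^{-1}$ energy estimate -- is exactly the paper's route. However, there is a genuine gap in the uniqueness half: you never account for the principal boundary term produced by the integration by parts in the energy estimate, namely the analogue of \eqref{BoundaryTermIBP}, which is the evaluation $-\,\partial_x^{-1}T^{R}_{\varepsilon}\Delta_s(0)\,\sigma_s^2(0)\,T^{R}_{\varepsilon}\Delta_s(0)$ at $x=0$. In the elastic case this term is the entire reason for Section \ref{subsec:weak_boundary} and Lemma \ref{IBPBoundary}; it does not simply disappear because the killing is absent. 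Your proposal attributes all residual boundary difficulties to the corrections $\mu_t\bar g^R_\varepsilon$ and $\rho_t\sigma_t\bar g^R_\varepsilon$ and claims they are dominated by $\nu_t(0,C\varepsilon^{1/2})$ and killed by the $O(\varepsilon^{1+\gamma})$ decay. This is doubly off: first, $g^{E,\kappa}_\varepsilon\equiv 0$ at $\kappa=0$, so those corrections are identically zero and are not where the difficulty lies; second, the factor $T^R_\varepsilon\Delta_s(0)=\langle\Delta_s,2p_\varepsilon(\cdot)\rangle$ is of order $\varepsilon^{-1/2}\Delta_s(0,\varepsilon^\eta)$ with $\eta<1/2$, and the class-$\Lambda$ decay $\mathbb{E}[\int_0^T\nu_t(0,\varepsilon)^2dt]=O(\varepsilon^{1+\gamma})$ is not strong enough to force this pointwise boundary quantity to vanish (the extra $\varepsilon^{1/2}$ gained in Lemma \ref{boundaryEstimate1} comes from integrating $e^{-x^2/\varepsilon}$ over $x>0$, which is unavailable at the single point $x=0$).

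The missing idea, which is the one-line heart of the paper's proof, is that by Fubini and the fact that $\int_0^\infty G^R_\varepsilon(x,y)\,dx=1$ one has $\partial_x^{-1}T^R_\varepsilon\nu_t(0)=-\nu_t[0,\infty)$, and the reflecting weak boundary identity (your displayed identity, with $\bar g^R_\varepsilon=0$) gives mass conservation $\nu_t[0,\infty)=1$ for every solution. Hence $\partial_x^{-1}T^R_\varepsilon\Delta_s(0)=0$ identically, the offending boundary term vanishes exactly rather than asymptotically, and no analogue of Lemma \ref{IBPBoundary} is needed. With that observation inserted, the rest of your argument (Young's inequality, absorbing the transport term using $\rho^2<1$, Gronwall, and the $H^{-1}$ estimate \eqref{H-1Estimate}) goes through as in the elastic case. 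Note also that the elastic machinery cannot be specialized naively to $\kappa=0$, since Lemma \ref{IBPBoundary} produces a term with prefactor $1/\kappa$; the mass-conservation shortcut is what replaces it.
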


Existence and uniqueness in the case of an absorbing boundary has been studied in \cite{HamblyetAl201, Ledger2014, HamblyLedger2017, HamblySojmark2019} with various additional features. Our final result shows that the elastic boundary gives rise to the absorbing and reflecting boundaries as limiting cases.

\begin{theorem} \label{CaseConvergenceThm}
Let $(\nu^\kappa,W^0)$ denote the unique weak solution with $\nu$ in the class $\Lambda$ to the SPDE \eqref{MeasureSPDE} with elastic boundary for a given $\kappa >0$. Then we have that
\begin{enumerate}
    \item as $\kappa \to \infty$, $(\nu^\kappa,W^0)$ converges in law to $(\nu^\infty,W^0)$ which is a solution to the SPDE (\ref{MeasureSPDE}) with test function space $\mathcal{C}^A_0(\mathbb{R})$,
    \item as $\kappa \to 0$, $(\nu^\kappa,W^0)$ converges in law to $(\nu^0,W^0)$ which is a solution to the SPDE (\ref{MeasureSPDE}) with test function space $\mathcal{C}^R_0(\mathbb{R})$.
\end{enumerate}
\end{theorem}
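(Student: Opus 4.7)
The strategy mirrors the proof of Theorem \ref{existenceThm}: establish joint tightness of $\{(\nu^\kappa,W^0)\}_\kappa$ uniformly in the elastic parameter, extract subsequential limits on $(D_{\mathcal{S}'},\mathrm{M1})\times(C_\R,\norm{\cdot}_\infty)$, identify any limit point as a solution of the target SPDE (absorbing when $\kappa\to\infty$, reflecting when $\kappa\to 0$), and appeal to uniqueness to promote subsequential to full convergence. For the tightness step, none of the ingredients in the proof of Theorem \ref{existenceThm}---bounded coefficients, exponential tails inherited from $\nu_0$ via Assumption \ref{AssumptionInitial}, and the mass bound $\nu_t^\kappa([0,\infty))\leq 1$---depend on $\kappa$, so the same Aldous-type estimates apply uniformly. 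In fact, a natural coupling of the exponential clocks $\chi^i$ across different $\kappa$'s yields the pathwise monotonicity $\langle \nu_t^\kappa,\phi\rangle \leq \langle \nu_t^0,\phi\rangle$ for nonnegative Schwartz $\phi$, which gives uniform control for free.

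The heart of the argument is an approximation of the new test-function spaces by elements of $\mathcal{C}_0^{E,\kappa}(\R)$. For $\kappa\to\infty$, given $\phi\in\mathcal{C}_0^A(\R)$ (so $\phi(0)=0$), fix a Schwartz $\psi$ with $\psi(0)=1$, $\psi'(0)=0$ and set
\[
\phi_\kappa(x)=\phi(x)+\frac{\phi'(0)}{\kappa}\psi(x).
\]
Then $\phi_\kappa(0)=\phi'(0)/\kappa$ and $\phi_\kappa'(0)=\phi'(0)=\kappa\phi_\kappa(0)$, so $\phi_\kappa\in\mathcal{C}_0^{E,\kappa}(\R)$, and $\phi_\kappa\to\phi$ in every Schwartz seminorm. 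For $\kappa\to 0$, given $\phi\in\mathcal{C}_0^R(\R)$ (so $\phi'(0)=0$), fix $\xi\in\mathcal{S}$ with $\xi(0)=0$, $\xi'(0)=1$ and set $\phi_\kappa(x)=\phi(x)+\kappa\phi(0)\xi(x)$; again $\phi_\kappa\in\mathcal{C}_0^{E,\kappa}(\R)$ and $\phi_\kappa\to\phi$. Substituting $\phi_\kappa$ in the SPDE \eqref{MeasureSPDE} for $\nu^\kappa$, then passing to the limit along the convergent subsequence via a Skorokhod representation, the drift and second-order terms converge by bounded convergence, while the stochastic integral converges in $L^2$ via It\^o isometry together with the uniform exponential tails and the mass bound. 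The resulting identity is exactly \eqref{MeasureSPDE} against $\phi$ for the candidate limit.

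The main obstacle is verifying that each limit point lies in a uniqueness class, so that subsequential convergence upgrades to convergence in law of the full family. For the reflecting limit, this is Theorem \ref{UniquenessReflecting}, provided $\nu^0$ is of class $\Lambda$: properties \ref{assptSupp}--\ref{assptTail} are immediate from tightness and Assumption \ref{AssumptionInitial}, and \ref{assptSpat} passes to the limit directly from the analogous estimate for $\nu^\kappa$. The delicate property is the boundary decay \ref{assptBound}: one needs the $O(\varepsilon^{1+\gamma})$ control of $\mathbb{E}[\int_0^T\nu_t^\kappa(0,\varepsilon)^2 dt]$ with constants uniform in $\kappa$, which I expect to obtain by inspecting the derivation of that bound for the particle system (it rests on single-particle Gaussian heat-kernel estimates and the sub-probability property, neither of which degenerates in $\kappa$). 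With this in hand, Theorem \ref{UniquenessReflecting} closes the case $\kappa\to 0$, while for $\kappa\to\infty$ uniqueness of the limiting absorbing SPDE follows from \cite{HamblyLedger2017,HamblySojmark2019}, whose hypotheses are satisfied within our class $\Lambda$.
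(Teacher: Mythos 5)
Your overall architecture (tightness, identification of limit points, upgrade to full convergence via uniqueness) coincides with the paper's, and your explicit construction of approximating test functions $\phi_\kappa\in\mathcal{C}_0^{E,\kappa}(\R)$ with $\phi_\kappa\to\phi$ in $\mathcal{S}$ is precisely the input that the paper's Proposition \ref{CasesIntegralConvergence} is designed to consume (the paper phrases the identification through the martingale problem \eqref{martingaleComponents} rather than passing to the limit in the stochastic integral directly, which sidesteps the issue that It\^o isometry alone does not give convergence of stochastic integrals when both integrand and driving noise are only known up to joint law; but this is a repairable stylistic difference). Your observation that the boundary-decay estimate \ref{assptBound} is $\kappa$-uniform because Proposition \ref{boundaryEst} discards the stopping times is also correct.

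The genuine gap is in the tightness step, which you dismiss as coming ``for free.'' The $\kappa$-dependence does not sit in the coefficients or the tails: it sits in the loss process $\mathcal{L}^\kappa_t=\mathbb{P}(\tau^\kappa\leq t\mid\mathcal{F}_t^{\nu^\kappa,W^0})$. The M1-tightness criterion used throughout the paper (via the decomposition \eqref{TightnessDecomp} and \cite[Proposition 4.2]{Ledger2016}) removes the monotone part from the oscillation modulus but still requires the endpoint condition \eqref{SecondTightness}, i.e.\ control of $\mathcal{L}^\kappa_{t+\delta}-\mathcal{L}^\kappa_t$ uniformly along the limit $\kappa\to\infty$ (see \eqref{KappaLoss}). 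As $\kappa\to\infty$ the clocks $\chi^\kappa\to 0$ and killing degenerates to instantaneous absorption at the boundary, so one must rule out a macroscopic loss of mass over a vanishing time interval; this is exactly where the paper's Lemma \ref{tightnessKappa} spends most of its effort, writing $\mathbb{P}(\tau^\kappa\leq t)=\mathbb{P}(\inf_{s\leq t}Y^\kappa_s\leq 0)$ with $Y^\kappa=\tilde Y+\chi^\kappa$, splitting on $\{Y^\kappa_t\geq\varepsilon\}$ versus $\{Y^\kappa_t\in(0,\varepsilon)\}$, and using the scale transformation and Girsanov to reduce to a reflected-Brownian small-ball and running-infimum estimate with the choice $\varepsilon(\delta)=\delta^{1/2}\log(1/\delta)$. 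Your proposed substitute, the monotone coupling giving $\langle\nu^\kappa_t,\phi\rangle\leq\langle\nu^{0}_t,\phi\rangle$ for $\phi\geq 0$, yields only a uniform bound (which is already free from $\nu^\kappa_t$ being a sub-probability measure) and says nothing about oscillations of $t\mapsto\mathcal{L}^\kappa_t$; domination by a single tight process does not imply tightness in M1. Without an argument of the type in Lemma \ref{tightnessKappa}, the existence of subsequential limit points in $(D_{\mathcal{S}'},\mathrm{M1})$ as $\kappa\to\infty$ is not established, and the rest of your proof has nothing to act on.
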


\section{Probabilistic estimates for the particle system}\label{probEstSection}

In this section we show a series of probabilistic estimates that will be used in Section \ref{regularitySection} to prove the regularity of limit points. As a first step, we note that all particles in the system  $\{(X^{i}_t \mathbbm{1}_{t < \tau^i})_{t \in [0,T]} \}_{1 \leq i \leq N}$ are identically distributed. As a result, we obtain for an arbitrary measurable set $S \subseteq \R$, $N \geq 1$ and $t \in [0,T]$ 
\begin{equation} \label{finite expecation}
    \mathbb{E} \left[ \nu^N_t(S) \right] = \frac{1}{N} \sum_{i=1}^N \mathbb{E}\left[\mathbbm{1}_{\{X_t^{i}\in S\}} \mathbbm{1}_{t < \tau^1}\right] = \mathbb{P}(X_t^{i} \in S, t < \tau^1). 
\end{equation}
We  show how we can use a scale transformation and a change of measure to estimate (\ref{finite expecation}) in terms of the distribution of a reflected Brownian motion $\lvert W \rvert$. 

\begin{lemma}[Scale transformation] \label{Scale}
Let $X_t^{1}$ be a particle from the particle system. Define the transformation function $\zeta: [0,T] \times \R \to \R$ as
\begin{equation*}
    \zeta(t,x) \coloneqq \int_0^x \frac{dy}{\sigma(y,t)}
\end{equation*}
then the process $Z_t \coloneqq \zeta(t,X_t^{1})$ is a semimartingale and its dynamics are given by $dZ_t = \Tilde{\mu}_tdt + dB_t  + dL_t(Z)$. The stochastic process B is a Brownian motion
\begin{equation*}
    B_t = \int_0^t \rho_s dW_s^0 + \int_0^t (1- \rho_s^2)^{\frac{1}{2}}dW_s^1,
\end{equation*}
\noindent
L is the local time of Z at zero and $\tilde{\mu}$ a drift coefficient given by
\begin{equation*}
    \Tilde{\mu}_t = (\frac{\mu}{\sigma}- \partial_x \sigma)(t,X_t^{1}) - \int_0^{X_t^{1}} \frac{\partial_t \sigma}{\sigma^2}(t,y)dy.
\end{equation*}
The coefficient $\tilde{\mu}_t$ is uniformly bounded in N and t.
\end{lemma}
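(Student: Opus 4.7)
The plan is a direct application of It\^o's formula to the process $Z_t = \zeta(t,X_t^1)$, followed by L\'evy's characterisation for the martingale part and a Skorokhod-type identification of the reflection term with the local time of $Z$ at the origin. First I would compute the partial derivatives of $\zeta$, which are well defined under Assumption \ref{AssumptionSpace}: $\partial_x \zeta(t,x) = 1/\sigma(t,x)$, $\partial_{xx}\zeta(t,x) = -\partial_x\sigma(t,x)/\sigma^2(t,x)$, and $\partial_t\zeta(t,x) = -\int_0^x \partial_t\sigma(t,y)/\sigma^2(t,y)\,dy$ (where differentiation under the integral sign is justified by the regularity of $\sigma$ and the uniform lower bound $\sigma \geq C_{\sigma,\mu}^{-1} > 0$). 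Plugging these into It\^o's formula applied along the SDE \eqref{particle SDE} then produces four contributions: the $\partial_t\zeta$ term, the $\mu$-drift scaled by $1/\sigma$, a quadratic-variation correction $\tfrac{1}{2}\partial_{xx}\zeta(t,X_t^1)\sigma^2(t,X_t^1)$, and a local-time contribution $\partial_x\zeta(t,X_t^1)\,dL_t^1$.

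Collecting the absolutely continuous terms then yields exactly the coefficient $\tilde{\mu}_t$ stated in the lemma. For the martingale part, the factor $\partial_x\zeta(t,X_t^1)\sigma(t,X_t^1) = 1$ causes the $\sigma$ in front of both Brownian motions to cancel, leaving precisely $dB_t = \rho_t\,dW_t^0 + (1-\rho_t^2)^{1/2}\,dW_t^1$. A direct computation of the quadratic variation gives $\langle B\rangle_t = \int_0^t(\rho_s^2 + 1 - \rho_s^2)\,ds = t$, so by L\'evy's theorem $B$ is a Brownian motion with respect to the natural filtration generated by $W^0$ and $W^1$. For the local-time term, $\zeta(t,0) = 0$ and the strict positivity of $\sigma$ make $\zeta(t,\cdot)$ a strictly increasing bijection of $[0,\infty)$, so $\{X_t^1 = 0\} = \{Z_t = 0\}$ and $Z$ is nonnegative. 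The term $\partial_x\zeta(t,X_t^1)\,dL_t^1 = dL_t^1/\sigma(t,0)$ is continuous, nondecreasing, adapted, and supported on $\{Z_t = 0\}$. Combining these properties with the nonnegativity of $Z$, the Skorokhod-type characterisation of the reflecting term uniquely identifies this process with the local time $L(Z)$ of $Z$ at the origin.

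Finally, the boundedness of $\tilde{\mu}_t$ follows directly from Assumption \ref{AssumptionSpace}: $|\mu/\sigma| \leq C_{\sigma,\mu}^2$ and $|\partial_x\sigma| \leq C_{\sigma,\mu}$ by parts (i)--(ii), while the integral remainder is controlled by $C_{\sigma,\mu}^2 \sup_{s\in[0,T]}\int_0^\infty|\partial_t\sigma(s,y)|\,dy < \infty$ by part (iii). The resulting bound is deterministic, hence uniform in $N$ and $t$. The main delicate point in the argument is the identification of the transformed reflection term with a genuine local time of $Z$: one must argue that the image of $dL^1$ under $\zeta$ plays the role of a reflecting measure for $Z$ and not merely an abstract nondecreasing process, which is where the nonnegativity $Z \geq 0$ and the fact that $\zeta(t,\cdot)$ pins $0$ to $0$ are both crucial.
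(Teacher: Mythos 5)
Your proposal is correct and follows essentially the same route as the paper: compute the partial derivatives of $\zeta$, apply It\^o's formula along \eqref{particle SDE}, observe that $\partial_x\zeta\cdot\sigma=1$ kills the volatility in the martingale part, and identify the transformed reflection term with the local time of $Z$. The one genuine difference is the last step: the paper simply cites \cite[Propositions 4.2 and 4.4]{SojmarkWorkingPaper22} to identify $\int_0^t \sigma(s,X_s^1)^{-1}\,dL_s^1$ with $L_t(Z)$, whereas you give a self-contained argument via uniqueness for the Skorokhod problem (nonnegativity of $Z$, $\zeta(t,0)=0$, and the fact that the pushed-forward measure is continuous, nondecreasing and carried by $\{Z=0\}$). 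Your version is more transparent and avoids the external reference, at the cost of having to be slightly careful that the Skorokhod reflecting term coincides with the chosen normalisation of semimartingale local time. One small consistency point: with the correct factor $\tfrac12$ in front of $\partial_{xx}\zeta\,d[X^1]_t$ (which you rightly include, and which the paper's displayed It\^o expansion omits), the collected drift is $\frac{\mu}{\sigma}-\tfrac12\partial_x\sigma-\int_0^{X_t^1}\frac{\partial_t\sigma}{\sigma^2}(t,y)\,dy$ rather than the stated $\frac{\mu}{\sigma}-\partial_x\sigma-\cdots$; this appears to be a typo in the lemma rather than an error in your argument, and it is immaterial downstream since only the uniform boundedness of $\tilde\mu$ is ever used.
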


\begin{proof}
We apply It{\^o}'s formula to obtain the dynamics of $Z$
\begin{align*}
    dZ_t &= \partial_t \zeta(t,X_t^{1}) dt + \partial_x \zeta(t,X_t^{1})dX_t^{1}+ \partial_{xx}\zeta(t,X_t^{1})d[X^{1}]_t \\
    &= \Tilde{\mu}(t,X_t^{1})dt+ dB_t + \frac{1}{\sigma(t,X_t^{1})}dL_t
\end{align*}
with $\Tilde{\mu}$ as defined above. By \cite[Proposition 4.2 and Proposition 4.4]{SojmarkWorkingPaper22}  we can identify the last term as 
\begin{equation*}
    \int_0^t \frac{1}{\sigma(s,X_s^{1})}dL_s  = L(Z_t)
\end{equation*}
establishing the equation for Z. The uniform boundedness for the drift parameter $\Tilde{\mu}$ follows from the conditions on the coefficients in Assumption \ref{AssumptionSpace}.
\end{proof}

\begin{remark}
The boundedness assumptions on $\sigma$ give that, for all $t \in [0,T]$,
\begin{equation} \label{zetaBound}
    \left \lvert \zeta(t,y) - \zeta(t,x) \right \rvert \leq C_{\mu,\sigma} \lvert y-x \rvert, \qquad x,y \geq 0
\end{equation}
with the constant $C_{\mu,\sigma}>0$ from Assumption \ref{AssumptionSpace}. Moreover, note that the transformation $\zeta(t,\cdot)$ is invertible for all $t \in [0,T]$.
\end{remark}

\begin{remark}
Note that the process $Z_t$ obtained from the scale transformation is a reflected Brownian motion with drift. Thus, the process Z solves the Skorokhod problem for the process $dY_t =  \Tilde{\mu}dt + dB_t$ and, in particular, stays non-negative (see \cite{revuz}).
\end{remark}

After removing the volatility factor $\sigma$ through the scale transformation, the next step is to remove the drift and obtain a reflected Brownian motion. This can be done using Girsanov's theorem. 
\begin{lemma}[Removing the drift] \label{removeDrift}
For all $\delta \in (0,1)$ there is a constant $c_{\delta}>0$ so that
\begin{equation*}
    \mathbbm{P}(X_t^{1} \in S) \leq c_{\delta} F_t(\zeta(t,S))^{\delta}, \qquad{\text{ for every measurable } S \subseteq \R},
\end{equation*}
where $F_t$ is the marginal law of a reflected Brownian motion at time t.
\end{lemma}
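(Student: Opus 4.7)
The plan is to apply Girsanov's theorem to the scaled process $Z_t=\zeta(t,X_t^1)$ from Lemma \ref{Scale} to absorb the bounded drift $\tilde{\mu}$ into a change of measure, so that $Z$ becomes a reflected Brownian motion under the new measure, and then to pass between the two measures using H\"older's inequality. Concretely, because Lemma \ref{Scale} provides a uniform bound $|\tilde{\mu}_s|\leq M$ in $s$, $\omega$ and $N$, Novikov's criterion is trivially satisfied for the Dol\'eans--Dade exponential
\[
D_T \;:=\; \mathcal{E}\Bigl(-\int_0^\cdot \tilde{\mu}_s\,dB_s\Bigr)_T,
\]
so the measure $\tilde{\mathbb{P}}$ defined by $d\tilde{\mathbb{P}}/d\mathbb{P} = D_T$ is equivalent to $\mathbb{P}$. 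Girsanov then gives that $\tilde{B}_t:=B_t+\int_0^t \tilde{\mu}_s\,ds$ is a $\tilde{\mathbb{P}}$-Brownian motion and $dZ_t=d\tilde{B}_t+dL_t(Z)$, so $Z$ is a reflected Brownian motion under $\tilde{\mathbb{P}}$ with the same initial distribution as under $\mathbb{P}$. In particular, $\tilde{\mathbb{P}}(Z_t\in A)=F_t(A)$ for every Borel set $A$.

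Setting $p:=1/(1-\delta)$, $q:=1/\delta$, and $A:=\zeta(t,S)$, the change of measure followed by H\"older's inequality yields
\[
\mathbb{P}(X_t^1\in S)\;=\;\mathbb{P}(Z_t\in A)\;=\;\tilde{\mathbb{E}}\bigl[D_T^{-1}\mathbf{1}_{\{Z_t\in A\}}\bigr]\;\leq\;\tilde{\mathbb{E}}\bigl[D_T^{-p}\bigr]^{1/p}F_t(A)^{\delta}.
\]
The remaining task is to bound $\tilde{\mathbb{E}}[D_T^{-p}]$ uniformly. Expressing the reciprocal density as the $\tilde{\mathbb{P}}$-exponential $D_T^{-1}=\mathcal{E}\bigl(\int_0^\cdot \tilde{\mu}_s\,d\tilde{B}_s\bigr)_T$, the identity
\[
D_T^{-p}\;=\;\mathcal{E}\Bigl(p\int_0^\cdot \tilde{\mu}_s\,d\tilde{B}_s\Bigr)_T\exp\Bigl(\tfrac{p(p-1)}{2}\int_0^T \tilde{\mu}_s^{2}\,ds\Bigr)
\]
combined with the true-martingale property of the first factor (again by Novikov applied to $p\tilde{\mu}$) and $|\tilde{\mu}|\leq M$ delivers $\tilde{\mathbb{E}}[D_T^{-p}]\leq \exp\bigl(\tfrac{p(p-1)}{2}M^{2}T\bigr)$. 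This packages into the claimed constant $c_\delta$, depending only on $\delta$, $M$, and $T$.

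The argument is essentially mechanical once the Girsanov setup is in place; the one point that requires care is the $L^p$-integrability of the reciprocal density, which is exactly where the uniform bound on $\tilde{\mu}$ from Lemma \ref{Scale} (itself a consequence of Assumption \ref{AssumptionSpace}) is indispensable. The restriction $\delta<1$ is also unavoidable: as $\delta\uparrow 1$ we have $p\to\infty$ and the constant $c_\delta$ blows up accordingly.
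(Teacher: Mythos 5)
Your proof is correct and follows essentially the same route as the paper: a Girsanov change of measure (justified by the uniform bound on $\tilde{\mu}$ from Lemma \ref{Scale}) turning $Z$ into a reflected Brownian motion, followed by H\"older's inequality with exponents $1/(1-\delta)$ and $1/\delta$ applied to the Radon--Nikodym derivative. The only difference is presentational: the paper delegates the H\"older step and the moment bound on the reciprocal density to the proof of \cite[Lemma 4.2]{HamblyLedger2017}, whereas you carry it out explicitly via the Dol\'eans--Dade identity.
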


\begin{proof}
We know that $Z$ is the solution to the Skorokhod problem for the process $Y$ with $dZ_t = \tilde{\mu}_t dt + dB_t + dL_t(Z)$. As $\tilde{\mu}$ is bounded, we can apply Girsanov's Theorem with the change of measure
\begin{equation*}
     \left.\frac{d\mathbb{Q}}{d\mathbbm{P}}\right|_{\mathcal{F}_t}= \exp \left(-\int_0^t \tilde{\mu}_sdB_s - \frac{1}{2} \int_0^t (\tilde{\mu_s})^2ds  \right)
 \end{equation*}
 to see that there is a measure $\mathbb{Q}$ with a Brownian motion $B^{\mathbb{Q}}$ under which the dynamics of $Z$ are those of a reflected Brownian motion
\begin{equation*}
    dZ_t = dB_t^{\mathbb{Q}} + dL_t(B^{\mathbb{Q}}).
\end{equation*}
The result then follows by proceeding as in the proof of \cite[Lemma 4.2]{HamblyLedger2017}.
\end{proof}

As an easy corollary we obtain good spatial concentration of the empirical measures.
\begin{lemma} \label{FiniteSpatial}
There exist constants $c>0$ and $\delta \in (0,1)$ such that property (iv) of Definition \ref{regularityClass} is satisfied by $(\nu_t^N)_{t\in[0,T]}$ uniformly in $N \geq 1$.
\end{lemma}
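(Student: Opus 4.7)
The plan is to reduce the question to a single-particle probability, apply the Girsanov reduction of Lemma~\ref{removeDrift}, and then use an $L^{\infty}$-bound on the density of reflected Brownian motion coming from the $L^2$-regularity of the initial condition in Assumption~\ref{AssumptionInitial}.

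First, by exchangeability and \eqref{finite expecation}, I would write
\[
\mathbb{E}\bigl[\nu_t^N(a,b)\bigr] = \mathbb{P}(X_t^{1}\in(a,b),\, t<\tau^1)\leq \mathbb{P}(X_t^{1}\in(a,b)).
\]
Fix any $\delta\in(0,1)$. Lemma~\ref{removeDrift} bounds the right-hand side by $c_\delta F_t(\zeta(t,(a,b)))^{\delta}$, where $F_t$ is the time-$t$ marginal of a reflected Brownian motion $|B|$ starting from the pushforward $\nu_0\circ\zeta(0,\cdot)^{-1}$. By \eqref{zetaBound}, $\zeta(t,(a,b))$ is contained in an interval $J_t\subseteq[0,\infty)$ of length at most $C_{\sigma,\mu}|b-a|$.

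Next, since $\sigma$ is bounded above and below, the map $y\mapsto\zeta(0,y)$ is bi-Lipschitz on $[0,\infty)$, so the pushforward of the $L^2$ density $V_0$ under $\zeta(0,\cdot)$ produces an initial density $\widetilde V_0\in L^2(0,\infty)$ for the reflected Brownian motion. Writing $F_t = \widetilde V_0\ast p_t^R$, where $p_t^R$ is the transition density of reflected Brownian motion, the reflection principle gives $\Vert p_t^R\Vert_{L^2}\leq C t^{-1/4}$, so that
\[
\Vert F_t\Vert_{L^\infty}\leq \Vert p_t^R\Vert_{L^2}\Vert \widetilde V_0\Vert_{L^2}\leq C' t^{-1/4}.
\]
Hence $F_t(J_t)\leq C'' t^{-1/4}|b-a|$, and raising to the power $\delta$ and combining with the previous step yields
\[
\mathbb{P}(X_t^{1}\in(a,b))\leq c_\delta C'''\, t^{-\delta/4}|b-a|^\delta.
\]

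Finally, since $\delta\in(0,1)\subset(0,4)$, the function $t\mapsto t^{-\delta/4}$ is integrable on $[0,T]$, so integrating in time yields
\[
\mathbb{E}\!\left[\int_0^T \nu_t^N(a,b)\,dt\right]\leq c_\delta C''' |b-a|^\delta \int_0^T t^{-\delta/4}dt \leq C|b-a|^\delta,
\]
uniformly in $N\geq 1$, as required. The only real subtlety is the singularity of $\Vert F_t\Vert_{L^\infty}$ as $t\downarrow 0$; it is handled by choosing $\delta<1$ so that the short-time singularity remains integrable, and this is precisely where Assumption~\ref{AssumptionInitial}(iv) that $V_0\in L^2$ enters.
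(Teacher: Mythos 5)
Your proof is correct and follows the same overall architecture as the paper's: exchangeability plus \eqref{finite expecation}, the Girsanov reduction of Lemma \ref{removeDrift}, the Lipschitz bound \eqref{zetaBound} to control the length of $\zeta(t,(a,b))$, and then integrability in $t$ of the resulting singularity after raising to the power $\delta$. The one place you diverge is the bound on $F_t(\zeta(t,(a,b)))$: the paper simply bounds the reflecting heat kernel pointwise by $2(2\pi t)^{-1/2}$, which gives $F_t(\zeta(t,(a,b)))\leq 2C_{\mu,\sigma}(2\pi t)^{-1/2}(b-a)$ and a $t^{-\delta/2}$ singularity, integrable since $\delta<1$; you instead push the $L^2$ density $V_0$ through $\zeta(0,\cdot)$ and apply Cauchy--Schwarz against $\Vert p_t^R(x,\cdot)\Vert_{L^2}=O(t^{-1/4})$, obtaining the slightly better rate $t^{-\delta/4}$. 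Both work, but the paper's route is more elementary and, notably, does not use the $L^2$ assumption on $V_0$ at all --- it holds for an arbitrary initial sub-probability measure --- whereas your argument genuinely needs the square-integrable initial density (and the bi-Lipschitz change of variables to preserve it). Since the improved time singularity buys nothing here, the extra hypothesis is an unnecessary cost; also note that the $L^2$ density is simply part of Assumption \ref{AssumptionInitial}, which has no item (iv).
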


\begin{proof}
By Lemma \ref{removeDrift} we have
\begin{align*}
    \mathbb{E} \left[\nu_t^N(a,b) \right] \leq c_{\delta}F_t(\zeta(t,(a,b)))^{\delta}.
\end{align*}
We note that $\zeta(t,(a,b)) \subseteq [\zeta(t,a),\zeta(t,b)]$ where $\zeta(t,(a,b)) = \{\zeta(t,x): x \in (a,b) \}$. Using the reflecting heat kernel $G_{\varepsilon}^R$ given in (\ref{reflectingKernel})  we estimate
\begin{align*}
    F_t(\zeta(t,(a,b))) & \leq \int_0^{\infty} \int_{\zeta(t,a)}^{\zeta(t,b)} \frac{1}{\sqrt{2 t \pi}} \Bigg[\exp \left(-\frac{(x-\zeta(0,x_0))^2}{2t} \right)\\
    & \qquad \qquad + \exp \left(-\frac{(x+\zeta(0,x_0))^2}{2t} \right) \Bigg]dx \nu_0(dx_0) \\
    & \leq 2 (2 \pi t)^{-1/2} (\zeta(t,b)-\zeta(t,a)) \leq 2C_{\mu,\sigma} (2 \pi t)^{-1/2} (b-a)
\end{align*}
where the last inequality follows from (\ref{zetaBound}). Since the map $ t \mapsto t^{-\delta/2}$ is integrable, the result then follows.
\end{proof}

Next we prove the boundary estimate that is a crucial part of the proof of uniqueness for the SPDE. First, we establish an estimate in the simple case where the particles follow a reflected Brownian motion with zero drift and $\sigma \equiv 1$.
\begin{lemma} \label{twoRefBM}
Assume that we have two particles X and Y following a reflected Brownian motion with correlation $\rho $ with $\lvert \rho \rvert <1$ and initial values $X_0$ and $Y_0$. i.e. $X = \lvert X_0 + W^1 \rvert$ and $Y= \lvert Y_0 + W^2 \rvert$ Then we have the following estimate
\begin{equation}
    \mathbb{P}(0 < \lvert X_0 + W_t^1 \rvert < \varepsilon,0< \lvert Y_0 + W_t^2 \rvert < \varepsilon ) \leq \frac{2}{ \pi \sqrt{1-\rho^2}t} \varepsilon^2.
\end{equation}
\end{lemma}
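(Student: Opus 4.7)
The plan is to reduce the probability to an integral of the joint Gaussian density of $(W^1_t, W^2_t)$ and then bound that integral using the uniform maximum of the density.

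First I would translate the event $\{0<|X_0+W^1_t|<\varepsilon,\ 0<|Y_0+W^2_t|<\varepsilon\}$ into a condition on $(W^1_t,W^2_t)$. Since $|X_0+W^1_t|<\varepsilon$ is equivalent to $W^1_t \in (-X_0-\varepsilon,-X_0+\varepsilon)$, and the constraint $|X_0+W^1_t|>0$ removes a single point of Lebesgue measure zero, the event is contained in the rectangle
\[
R_\varepsilon \coloneqq (-X_0-\varepsilon,-X_0+\varepsilon)\times(-Y_0-\varepsilon,-Y_0+\varepsilon),
\]
which has area $4\varepsilon^2$. The same reasoning applies to $W^2_t$.

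Next I would write down the joint density of $(W^1_t,W^2_t)$. Since $W^1$ and $W^2$ are Brownian motions with correlation $\rho$ (i.e.\ $d\langle W^1,W^2\rangle_s = \rho\,ds$), the vector $(W^1_t,W^2_t)$ is centred bivariate Gaussian with covariance matrix $t\bigl(\begin{smallmatrix}1&\rho\\\rho&1\end{smallmatrix}\bigr)$, so its density is
\[
f_t(x,y)=\frac{1}{2\pi t\sqrt{1-\rho^2}}\exp\!\left(-\frac{x^2-2\rho xy+y^2}{2t(1-\rho^2)}\right).
\]
The quadratic form in the exponent is non-negative (positive definite since $|\rho|<1$), so $f_t$ attains its global maximum at the origin, giving the pointwise bound $f_t(x,y)\leq (2\pi t\sqrt{1-\rho^2})^{-1}$ for all $(x,y)\in\mathbb{R}^2$.

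Finally, combining these two steps,
\[
\mathbb{P}\bigl(0<|X_0+W^1_t|<\varepsilon,\ 0<|Y_0+W^2_t|<\varepsilon\bigr)\;\leq\;\iint_{R_\varepsilon} f_t(x,y)\,dx\,dy \;\leq\; \frac{|R_\varepsilon|}{2\pi t\sqrt{1-\rho^2}} \;=\; \frac{2\varepsilon^2}{\pi t\sqrt{1-\rho^2}},
\]
which is the claimed bound. There is no real obstacle here: the only thing to be careful about is the routine check that the joint law of two correlated Brownian motions at time $t$ is exactly the bivariate Gaussian above, and that taking the absolute value only removes a null set from the event in question. No appeal to the structure of reflection (local time, Skorokhod problem) is needed, since the event is controlled purely through the pre-reflection Brownian increments.
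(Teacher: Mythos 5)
Your proof is correct and is essentially the paper's argument: both reduce to bounding the correlated bivariate Gaussian density of $(W^1_t,W^2_t)$ uniformly by $(2\pi t\sqrt{1-\rho^2})^{-1}$ and multiplying by the total mass $4\varepsilon^2$ of the relevant region. The paper folds the density (writing the four-term reflected bivariate Gaussian, each term bounded at its mode, integrated over $[0,\varepsilon]^2$ and against the law of the initial positions), whereas you fold the domain (a single rectangle of area $4\varepsilon^2$ for the unreflected increments, with the initial positions handled by conditioning since the bound is uniform in the rectangle's location); the computation and the constant are the same.
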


\begin{proof}
We have, using the explicit form of the folded bivariate Gaussian density,
\begin{align*}
    &\mathbb{P}(0 < \lvert X_0 + W_t^1 \rvert < \varepsilon,0< \lvert Y_0 + W_t^2 \rvert < \varepsilon ) \\ &= \int_0^{\varepsilon} \int_0^{\varepsilon}  \int_{0}^{\infty} \int_{0}^{\infty }\frac{1}{2 \pi \sqrt{1-\rho^2}t} \times \\
    & \Bigg[ \exp\left(- \frac{1}{2 (1-\rho^2)} \left(\frac{(x-x_0)^2}{t}- 2 \rho \frac{(x-x_0)(y-y_0)}{t}+ \frac{(y-y_0)^2}{t} \right) \right) \\
    &+ \exp\left(- \frac{1}{2 (1-\rho^2)} \left(\frac{(x+x_0)^2}{t}- 2 \rho \frac{(x+x_0)(y+y_0)}{t}+ \frac{(y+y_0)^2}{t} \right) \right) \\
    &+ \exp\left(- \frac{1}{2 (1-\rho^2)} \left(\frac{(x+x_0)^2}{t}+ 2 \rho \frac{(x+x_0)(y-y_0)}{t}+ \frac{(y-y_0)^2}{t} \right) \right)\\
    &+ \exp\left(- \frac{1}{2 (1-\rho^2)} \left(\frac{(x-x_0)^2}{t} + 2 \rho \frac{(x-x_0)(y+y_0)}{t}+ \frac{(y+y_0)^2}{t} \right) \right)
    \Bigg]
    \nu_0(dx_0) \nu_0(dy_0) dxdy
\end{align*}
Simple bounds on the Gaussian density give the result
\end{proof}

\begin{proposition}[Boundary estimate] \label{boundaryEst}
For any  $q > 1$ and $t \in [0,T]$ we have as $\varepsilon \to 0 $ that for $i,j = 1, \ldots, N$ with $i \neq j$ 
\begin{equation*}
    \mathbbm{P}(0 < X_t^i < \varepsilon,t < \tau^i, 0 < X_t^j < \varepsilon, t < \tau^j) = t^{-\frac{1}{q}} O(\varepsilon^{\frac{2}{q}}).
\end{equation*}
\end{proposition}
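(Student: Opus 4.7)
The plan is to run the Girsanov-plus-Hölder argument used for one particle in Lemma \ref{removeDrift} on the pair $(X^i,X^j)$, with Lemma \ref{twoRefBM} providing the two-particle density bound. First, drop the killing events via $\mathbbm{1}_{t<\tau^\ell}\leq 1$ for $\ell=i,j$, reducing the task to bounding $\mathbb{P}(0<X_t^i<\varepsilon,\,0<X_t^j<\varepsilon)$. Apply the scale transformation from Lemma \ref{Scale} to both indices: set $Z_t^\ell:=\zeta(t,X_t^\ell)$ so that $Z^\ell$ is a reflected Brownian motion with bounded drift $\tilde\mu^\ell$ and driving Brownian motion $B^\ell=\int_0^{\cdot}\rho_s\,dW^0_s+\int_0^{\cdot}\sqrt{1-\rho_s^2}\,dW_s^\ell$. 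The Lipschitz bound \eqref{zetaBound} together with $\zeta(t,0)=0$ gives $\{0<X_t^\ell<\varepsilon\}\subseteq\{0<Z_t^\ell<C_{\mu,\sigma}\varepsilon\}$, so it remains to estimate $\mathbb{P}(A_\varepsilon)$ where $A_\varepsilon:=\{0<Z_t^i<C_{\mu,\sigma}\varepsilon,\,0<Z_t^j<C_{\mu,\sigma}\varepsilon\}$.

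Next, apply Girsanov to the independent triple $(W^0,W^i,W^j)$ using the adapted, bounded shift $\theta^0\equiv 0$ and $\theta^\ell=\tilde\mu^\ell/\sqrt{1-\rho_s^2}$ for $\ell=i,j$; this is admissible provided $\rho$ stays away from $1$, which is implicit throughout the paper, and Novikov then guarantees that the Radon--Nikodym derivative $d\mathbb{P}/d\mathbb{Q}$ is in every $L^p(\mathbb{Q})$ with a uniform bound. Under $\mathbb{Q}$, the drifts of $Z^i,Z^j$ cancel, so the pair becomes two correlated driftless reflected Brownian motions with covariation $\int_0^{\cdot}\rho_s^2\,ds$. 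For $q>1$ with conjugate exponent $p$, Hölder yields
\begin{equation*}
\mathbb{P}(A_\varepsilon)=\mathbb{E}_{\mathbb{Q}}\Bigl[\tfrac{d\mathbb{P}}{d\mathbb{Q}}\mathbbm{1}_{A_\varepsilon}\Bigr]\leq\Bigl(\mathbb{E}_{\mathbb{Q}}\Bigl[\bigl(\tfrac{d\mathbb{P}}{d\mathbb{Q}}\bigr)^{p}\Bigr]\Bigr)^{1/p}\mathbb{Q}(A_\varepsilon)^{1/q},
\end{equation*}
and the two-particle estimate of Lemma \ref{twoRefBM}, which only uses the single-time marginal density of the pair, supplies $\mathbb{Q}(A_\varepsilon)\leq C\varepsilon^{2}/t$. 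Combining these gives the desired $t^{-1/q}O(\varepsilon^{2/q})$.

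The principal obstacle is justifying that Lemma \ref{twoRefBM} transfers to the joint law of $(Z_t^i,Z_t^j)$. That lemma is formulated with each coordinate written literally as a folded Brownian motion $|X_0+W^1_t|$, and while the one-time marginal of a Skorokhod-reflected Brownian motion from any non-zero initial value coincides with that of $|X_0+B_t|$, the joint time-$t$ law of two correlated Skorokhod-reflected processes need not equal that of the folded pair. The cleanest resolution is to observe that the joint density at time $t$ of $(Z_t^i,Z_t^j)$ can be expanded via the reflection principle in each coordinate into a sum of four Gaussian kernels of precisely the form that appears in the proof of Lemma \ref{twoRefBM}; the same simple-Gaussian bound therefore applies, up to a multiplicative constant, and the time-varying correlation is absorbed into that constant using the uniform bound $\sup_{s\in[0,T]}\rho(s)<1$.
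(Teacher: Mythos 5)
Your proposal follows essentially the same route as the paper's proof: drop the killing indicators, apply the scale transformation of Lemma \ref{Scale}, use the multidimensional Girsanov theorem combined with H\"older's inequality to pass to a measure $\mathbb{Q}$ under which the pair is driftless at the cost of a $1/q$ power, and invoke the two-particle Gaussian bound of Lemma \ref{twoRefBM}. The subtlety you flag at the end --- identifying the joint time-$t$ law of the two correlated Skorokhod-reflected processes with that of the folded pair appearing in Lemma \ref{twoRefBM} --- is passed over silently in the paper's own proof, which simply cites Lemma \ref{twoRefBM} after the change of measure, so your treatment is at least as careful as the original.
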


\begin{proof}
It is clear that we can estimate the probability from above by dropping the stopping time conditions
\begin{equation*}
    \mathbbm{P}(0 < X_t^i < \varepsilon,t < \tau^i, 0 < X_t^j < \varepsilon, t < \tau^j) \leq \mathbbm{P}(0 < X_t^i < \varepsilon, 0 < X_t^j < \varepsilon).
\end{equation*}
We apply the scale transformation $\zeta$ and then use the multidimensional version of Girsanov's theorem (see \cite[\S1.7.4]{jeanblanc}) that preserves the correlation structure between the Brownian motions. Note  furthermore that $\zeta(t,0) = 0$ and by (\ref{zetaBound})  we have $\zeta(t,\varepsilon) \leq C_{\mu,\sigma} \varepsilon$. As a result, we have 
\begin{equation*}
    \mathbbm{P}(0 < X_t^i < \varepsilon, 0 < X_t^j < \varepsilon) \leq \tilde{C} \mathbb{Q}(0 < \lvert X_0^i+ W_t^i \rvert < C_{\mu,\sigma} \varepsilon, 0 < \lvert X_0^j+ W_t^j \rvert < C_{\mu,\sigma} \varepsilon )^{1/q}
\end{equation*}
for $q >1$ and a constant $\tilde{C}>0$. Then, by Lemma \ref{twoRefBM} we get the desired estimate. 
\end{proof}
Note that $t \mapsto t^{-\frac{1}{q}}$ is integrable because $q >1$. A decay result can also be found for the mass escaping to infinity.
\begin{proposition}[Tail estimate] \label{tail}
For every $\alpha >0$, as $\lambda \to + \infty$
\begin{equation*}
    \mathbb{E}  \left[\nu^N_t(\lambda,+\infty) \right] = o(\exp(-\alpha \lambda)), \quad \text{uniformly in $N \geq 1$ and $t \in [0,T]$}.
\end{equation*}
\end{proposition}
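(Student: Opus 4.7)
The plan is to use exchangeability to reduce to a single-particle estimate, then apply Lemmas \ref{Scale} and \ref{removeDrift} to transfer the problem to a tail bound for a reflected Brownian motion started from $\nu_0$, where the Gaussian tails and the hypothesis on $\nu_0$ can be combined.

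First, exchangeability and \eqref{finite expecation} give
\[
\mathbb{E}[\nu^N_t(\lambda,\infty)] = \mathbb{P}(X^1_t > \lambda,\, t<\tau^1) \leq \mathbb{P}(X^1_t>\lambda),
\]
which is already uniform in $N$. Fixing any $\delta\in(0,1)$, Lemma \ref{removeDrift} gives
\[
\mathbb{P}(X^1_t>\lambda) \leq c_\delta\, F_t\bigl(\zeta(t,(\lambda,\infty))\bigr)^\delta,
\]
where $F_t$ is the law at time $t$ of a reflected Brownian motion $|X_0^1 + W_t|$ started from $\nu_0$. Since $\sigma(t,\cdot)\geq C_{\mu,\sigma}^{-1}$, we have $\zeta(t,\lambda)\geq \lambda/C_{\mu,\sigma}$, so $\zeta(t,(\lambda,\infty))\subseteq (\lambda/C_{\mu,\sigma},\infty)$ and hence
\[
F_t\bigl(\zeta(t,(\lambda,\infty))\bigr) \leq \int_0^\infty \mathbb{P}\bigl(|x_0+W_t| > \lambda/C_{\mu,\sigma}\bigr)\,\nu_0(dx_0).
\]

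Next, the plan is to split this integral at $x_0 = \lambda/(2C_{\mu,\sigma})$. On $\{x_0\geq \lambda/(2C_{\mu,\sigma})\}$, the integrand is bounded by $1$ and the contribution is at most $\nu_0(\lambda/(2C_{\mu,\sigma}),\infty)$, which is $o(e^{-\alpha\lambda})$ for every $\alpha>0$ by Assumption \ref{AssumptionInitial}. On $\{x_0 < \lambda/(2C_{\mu,\sigma})\}$, the event $\{|x_0+W_t|>\lambda/C_{\mu,\sigma}\}$ forces $|W_t|>\lambda/(2C_{\mu,\sigma})$, and the standard Gaussian tail bound yields
\[
\mathbb{P}\bigl(|x_0+W_t| > \lambda/C_{\mu,\sigma}\bigr) \leq 2\exp\Bigl(-\frac{\lambda^2}{8 C_{\mu,\sigma}^2\, t}\Bigr) \leq 2\exp\Bigl(-\frac{\lambda^2}{8 C_{\mu,\sigma}^2\, T}\Bigr),
\]
which is uniform in $t\in(0,T]$ and decays faster than any exponential in $\lambda$. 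The case $t=0$ is handled directly by the tail assumption on $\nu_0$.

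Combining the two pieces, for every $\alpha>0$ we obtain $F_t(\zeta(t,(\lambda,\infty))) = o(e^{-\alpha\lambda})$ uniformly in $t\in[0,T]$, and raising to the power $\delta$ preserves this super-exponential decay (with $\alpha$ replaced by $\alpha\delta$, which is still arbitrary). The only genuinely delicate point is the splitting argument above, which is needed because neither the Gaussian tail alone nor the $\nu_0$-tail alone suffices to absorb the convolution uniformly in $\lambda$; apart from this, the proof is a straightforward assembly of the estimates already proved.
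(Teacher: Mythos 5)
Your proof is correct and follows essentially the same route as the paper: reduce to a single particle by exchangeability, pass to a reflected Brownian motion via the scale transformation and Girsanov (Lemma~\ref{removeDrift}), bound the reflected kernel by twice the one-sided Gaussian tail, and then control the convolution with the initial law by splitting at $x_0=\lambda/(2C_{\mu,\sigma})$ --- the paper outsources this last splitting step to a citation of \cite[Proposition 4.5]{HamblyLedger2017}, whereas you write it out explicitly. The only cosmetic slip is that the integral should be taken against the pushforward $\pi_0=\nu_0\circ\zeta(0,\cdot)^{-1}$ rather than $\nu_0$ itself, but since $\zeta(0,x)\leq C_{\mu,\sigma}x$ the tail of $\pi_0$ inherits the super-exponential decay of Assumption~\ref{AssumptionInitial}, so the argument is unaffected.
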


\begin{proof}
We will again rely on the results and notation of Lemma \ref{removeDrift}. Setting $\pi_0 = \nu_0 \circ \zeta(0,\cdot)^{-1}$ we can estimate for $\lambda >0$
\begin{align*}
    F_t((\lambda,\infty)) &= \int_0^{\infty} \mathbb{P}(\lvert W_t \rvert > \lambda \vert \lvert W_0 \rvert = x_0) \pi_0(dx_0) \\
    &= \frac{1}{\sqrt{2 \pi t}} \int_0^{\infty} \int_{\lambda}^{\infty}  \left[ e^{- \frac{(x-x_0)^2}{2t}}+ e^{- \frac{(x+x_0)^2}{2t}} \right]dx \pi_0(dx_0) \\
    &\leq \frac{2}{\sqrt{2 \pi t}} \int_0^{\infty} \int_{\lambda}^{\infty} e^{- \frac{(x-x_0)^2}{2t}} dx d\pi_0(dx_0) = 2 \int_0^{\infty} \mathbb{P}( W_t  > \lambda \vert  W_0  = x_0) \pi_0(dx_0).
\end{align*}

Now we can argue as in \cite[Proposition 4.5]{HamblyLedger2017}  using Assumption \ref{AssumptionInitial} for the initial condition and applying Lemma \ref{removeDrift}. 
\end{proof}

\section{Convergence of the particle system} \label{existenceSection}

In this section we show that limit points of the particle system satisfy the SPDE \eqref{MeasureSPDE}. Furthermore, we verify that all limit points satisfy the regularity conditions of the class $\Lambda$ and thus complete the proof of Theorem \ref{existenceThm}.

\subsection{Evolution equation for the empirical measures}

We begin by finding an evolution equation for the empirical-measure process $\nu^N$ of the particle system.
\begin{proposition}[Evolution equation for $\nu^N$] \label{finiteEvolEq}
For all $N \geq 1$ the empirical measure process $\nu^N$ satisfies the evolution equation
\begin{align} \label{eq:eefornu}
    \begin{split}
        \langle \nu_t^N, \phi \rangle =& \langle \nu_0^N, \phi \rangle + \int_0^t \langle \nu_s^N, \mu(s,\cdot) \phi^{\prime} \rangle ds + \frac{1}{2} \int_0^t \langle \nu_s^N, \sigma^2(s,\cdot) \phi^{\prime \prime} \rangle ds \\
        &+ \int_0^t \langle \nu_s^N, \rho_s \sigma(s,\cdot) \phi^\prime \rangle dW_s^0 + I_t^N(\phi) + J_t^N(\phi),
    \end{split}
\end{align}
with
\begin{equation*}
    I_t^N(\phi) = \frac{1}{N} \sum_{i=1}^N  \int_0^{t \wedge \tau^i} (1-\rho_s^2)^{\frac{1}{2}} \sigma(s,X_s^i) \phi^\prime(X_s^i)  dW_s^i
\end{equation*}
and
\begin{equation*}
    J_t^N(\phi) = \phi(0) \left(\frac{\kappa}{N} \sum_{i=1}^N    L_{t \wedge \tau^i}^i - \frac{1}{N} \sum_{i=1}^N  \mathbbm{1}_{\tau^i \leq t}\right),
\end{equation*}
for all $\phi \in \mathcal{C}^{E,\kappa}_0(\mathbb{R})$, where $I_t^N(\phi)\to 0$ and $J_t^N(\phi) \to 0$ as $N \to \infty$.
\end{proposition}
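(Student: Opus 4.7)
The plan is to apply It{\^o}'s formula to $\phi(X^i_t)$, stop at the elastic killing time $\tau^i$, invoke the boundary condition $\phi'(0)=\kappa\phi(0)$ to convert the local-time contribution into the killing compensator, sum across $i$, and show that the two residual martingale terms vanish as $N\to\infty$.

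\emph{Assembling the evolution equation.} For $\phi\in\mathcal{C}^{E,\kappa}_0(\R)\subset\mathcal{S}$, It{\^o}'s formula via \eqref{particle SDE} gives
\begin{align*}
d\phi(X^i_t) &= \mu(t,X^i_t)\phi'(X^i_t)\,dt + \tfrac{1}{2}\sigma^2(t,X^i_t)\phi''(X^i_t)\,dt\\
&\quad + \rho(t)\sigma(t,X^i_t)\phi'(X^i_t)\,dW^0_t + (1-\rho(t)^2)^{1/2}\sigma(t,X^i_t)\phi'(X^i_t)\,dW^i_t + \phi'(0)\,dL^i_t,
\end{align*}
where the local-time term is evaluated at $0$ since $dL^i$ is supported on $\{X^i=0\}$. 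Integrating up to $t\wedge\tau^i$ and using $X^i_{\tau^i}=0$ to rewrite $\phi(X^i_{t\wedge\tau^i}) = \phi(X^i_t)\mathbbm{1}_{t<\tau^i} + \phi(0)\mathbbm{1}_{\tau^i\leq t}$, the boundary condition $\phi'(0)=\kappa\phi(0)$ turns $\phi'(0)L^i_{t\wedge\tau^i}$ into $\kappa\phi(0)L^i_{t\wedge\tau^i}$, and the two $\phi(0)$-terms combine into $\phi(0)(\kappa L^i_{t\wedge\tau^i}-\mathbbm{1}_{\tau^i\leq t})$. Averaging over $i$ and using $\int_0^{t\wedge\tau^i}f(s,X^i_s)\,d\xi_s = \int_0^t f(s,X^i_s)\mathbbm{1}_{s<\tau^i}\,d\xi_s$ for $\xi_s=s$ and $\xi_s=W^0_s$, the drift, diffusion, and $W^0$-contributions aggregate into the empirical-measure integrals in \eqref{eq:eefornu}; the $W^i$-integrals become $I^N_t(\phi)$ and the boundary terms become $J^N_t(\phi)$.

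\emph{Vanishing of $I^N$ and $J^N$.} The summands in $I^N_t(\phi)$ are orthogonal martingales because the $W^i$ are mutually independent and independent of $W^0$; It{\^o} isometry combined with the boundedness of $\sigma$ and $\phi'\in\mathcal{S}$ gives $\mathbb{E}[I^N_t(\phi)^2] = O(1/N)$. For $J^N_t(\phi)$, the crux is that $M^i_t := \kappa L^i_{t\wedge\tau^i} - \mathbbm{1}_{\tau^i\leq t}$ is a martingale in the filtration enlarged by the thresholds $(\chi^j)$: conditional on the path of $X^i$, the killing time $\tau^i$ is the first passage of the continuous increasing process $L^i$ above an independent $\mathrm{Exp}(\kappa)$ variable, whose compensator is precisely $\kappa L^i_{\cdot\wedge\tau^i}$. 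Since the $\chi^i$ are i.i.d.\ and independent of $(X^j)_j$, conditioning on the joint paths yields $\mathbb{E}[M^i_tM^j_t]=0$ for $i\neq j$, while the bound $L^i_{t\wedge\tau^i}\leq\chi^i\sim\mathrm{Exp}(\kappa)$ ensures $\sup_{i,t}\mathbb{E}[(M^i_t)^2]<\infty$, whence $\mathbb{E}[J^N_t(\phi)^2]=O(1/N)$ as well.

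\emph{Main obstacle.} The only genuine subtlety is the identification of $\kappa L^i_{\cdot\wedge\tau^i}$ as the compensator of $\mathbbm{1}_{\tau^i\leq\cdot}$ and the orthogonality of the martingales $M^i$; this orthogonality follows from independence of the exponential thresholds $\chi^i$ rather than from independence of the particle paths, which would fail because of the common noise $W^0$. Once this martingale structure is in place, the rest of the argument reduces to a routine summation of It{\^o}'s formula and a direct application of the Robin-type boundary condition encoded in $\mathcal{C}^{E,\kappa}_0$.
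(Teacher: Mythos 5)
Your assembly of the evolution equation is the same as the paper's: It\^o's formula for $\phi(X^i_{t\wedge\tau^i})$, the decomposition $\phi(X^i_{t\wedge\tau^i})=\phi(X^i_t)\mathbbm{1}_{t<\tau^i}+\phi(0)\mathbbm{1}_{\tau^i\leq t}$, the boundary condition $\phi'(0)=\kappa\phi(0)$ to turn the local-time integral into $\kappa\phi(0)L^i_{t\wedge\tau^i}$, and the quadratic-variation/Doob argument for $I^N$. Where you genuinely diverge is the treatment of $J^N$: the paper simply cites \cite[Theorem 2.5]{SojmarkWorkingPaper22} for $\mathbb{E}[\sup_{t\in[0,T]}|J^N_t(\phi)|^2]\to 0$, whereas you give a self-contained argument identifying $\kappa L^i_{\cdot\wedge\tau^i}$ as the compensator of $\mathbbm{1}_{\tau^i\leq\cdot}$ (a Cox-construction fact, since $\mathbb{P}(\tau^i>t\mid X^i)=e^{-\kappa L^i_t}$) and deriving orthogonality of the $M^i$ by conditioning on the joint particle paths and using independence of the thresholds $\chi^i$ --- correctly noting that path-independence would fail because of $W^0$. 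This computation checks out: $\mathbb{E}[M^i_t\mid\mathcal{F}^X]=\kappa\mathbb{E}[L^i_t\wedge\chi^i\mid\mathcal{F}^X]-(1-e^{-\kappa L^i_t})=0$, the bound $L^i_{t\wedge\tau^i}\leq\chi^i$ gives the uniform second moments, and hence $\mathbb{E}[J^N_t(\phi)^2]=O(1/N)$. Your version is more elementary and keeps the proof self-contained; its only (cosmetic) deficit is that you state pointwise-in-$t$ convergence while the cited result gives the uniform version, which is what is invoked later in Proposition \ref{EmpLossProp} at two time points --- but since you have already exhibited $M^i$ as a martingale, Doob's inequality upgrades your bound to $\mathbb{E}[\sup_{t\leq T}|J^N_t(\phi)|^2]=O(1/N)$ at no extra cost.
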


\begin{proof}
Apply It{\^o}'s formula to $\phi(X_{t \wedge \tau^i}^i)$ to get
\begin{align*}
    &\phi(X_{t \wedge \tau^i}^i) = \phi(X_0^i) + \int_0^{t \wedge \tau^i} 
    \!\!\phi^\prime(X_s^i) \mu(s,X_s^i)ds + \int_0^{t \wedge \tau^i} \!\! \phi^\prime(X_s^i) (1-\rho_s^2)^{\frac{1}{2}} \sigma(s,X_s^i)dW_s^i \\
    &\quad+ \int_0^{t \wedge \tau^i}\!\! \phi^\prime(X_s^i) \rho_s \sigma(s,X_s^i)dW_s^0
    + \frac{1}{2} \int_0^{t \wedge \tau^i}\!\! \phi^{\prime \prime}(X_s^i) \sigma^2(s,X_s^i) ds + \int_0^{t \wedge \tau^i} \!\!\phi^\prime(X_s^i)dL_s^i.
\end{align*}
Recall that 
\begin{equation*}
    \langle \nu_t^N, \phi \rangle = \frac{1}{N} \sum_{i=1}^N \phi(X_t^i) \mathbbm{1}_{t < \tau^i}.
\end{equation*}
Using $ \phi(X_{t \wedge \tau^i}) = \phi(X_{t}) \mathbbm{1}_{t < \tau^i} + \phi(X_{\tau^i})\mathbbm{1}_{t \geq  \tau^i}$, the elastic boundary condition of the test function $\phi$ and integrating against $\nu^N$ we obtain \eqref{eq:eefornu}.

It remains to prove that the two terms $I^N(\phi)$ and $J^N(\phi)$ vanish in a suitable way as $N \to \infty$. We start by computing the quadratic variation of $I^N(\phi)$. By independence of the idiosyncratic Brownian motions $W^i$ we obtain 
\begin{equation*}
    [I_{\cdot}^N(\phi)]_t = \frac{1}{N^2} \sum_{i=1}^N \int_0^t (1-\rho_s^2)  \sigma^2(s,X_s^i) (\phi^\prime(X_s^i))^2ds.
\end{equation*}
The boundedness of $\sigma^2$ and $\phi^\prime$ yields
\begin{equation*}
    [I_{\cdot}^N(\phi)]_t = \norm{\phi^\prime}_{\infty}^2 O(N^{-1}), \quad \text{as } N \to \infty.
\end{equation*}
By Doob's martingale inequality it follows that
\begin{equation*}
    \mathbb{E} \left[\sup_{t \in [0,T]} \lvert I_t^N(\phi) \rvert^2 \right] \leq 4 \mathbb{E} \left[I_T^N(\phi)^2 \right] = 4  \mathbb{E} \left[[I_{\cdot}^N(\phi)]_T \right]\to 0 \mbox{ as $N\to\infty$}.
\end{equation*}
From \cite[Theorem 2.5]{SojmarkWorkingPaper22} we obtain
\begin{equation*}
    \mathbb{E} \left[\sup_{t \in [0,T]} \lvert J_t^N(\phi) \rvert^2 \right] \to 0, \qquad {\text{as $N \to \infty$}}.
\end{equation*}
\end{proof}

\subsection{Convergence to the Limit SPDE}\label{convergenceSection}

The next step is now to show that limit points of the empirical measure processes $\nu^N$ indeed solve our SPDE. First, we need to verify tightness and conclude that there exist limit points. To do this we follow the approach in \cite{HamblyLedger2017}, showing tightness in $D_{\mathcal{S}^\prime}$, the space of c{\`a}dl{\`a}g paths with values in the space of tempered distributions, equipped with the M1 topology and then verifying that the limit points are sub-probability measure-valued processes.

\begin{proposition}\label{EmpLossProp}
For every $t \in [0,T]$ and $\eta>0$, the empirical loss process, $\mathcal{L}^N$, obeys
\begin{equation} \label{empiricalLossDefn}
 \lim_{\delta \to 0} \lim_{N \to \infty} \mathbb{P}\left(\mathcal{L}^N_{t+\delta}-\mathcal{L}_t^N \geq \eta \right)=0,\quad\mathcal{L}_t^N \coloneqq \frac{1}{N} \sum_{i=1}^N \mathbbm{1}_{\tau^i \leq t}.
\end{equation}
\end{proposition}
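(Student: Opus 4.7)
The plan is to reduce the probability bound to a first-moment estimate via Markov's inequality, use the exchangeability of the particles to express the expectation as a single-particle quantity independent of $N$, and then conclude by right-continuity of the cumulative distribution function of $\tau^1$. Starting from the representation
\[
\mathcal{L}^N_{t+\delta}-\mathcal{L}^N_{t} \;=\; \frac{1}{N}\sum_{i=1}^N \mathbbm{1}_{\{t<\tau^i\leq t+\delta\}} \;\geq\; 0,
\]
Markov's inequality yields
\[
\mathbb{P}\bigl(\mathcal{L}^N_{t+\delta}-\mathcal{L}^N_{t}\geq \eta\bigr) \;\leq\; \eta^{-1}\,\mathbb{E}\bigl[\mathcal{L}^N_{t+\delta}-\mathcal{L}^N_{t}\bigr].
\]

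Next, I would exploit that the triples $(X_0^i,W^i,\chi^i)$ are i.i.d., so conditionally on the common noise $W^0$ the particles, and therefore their killing times $\tau^i$, are also i.i.d., and in particular identically distributed. Taking unconditional expectation,
\[
\mathbb{E}\bigl[\mathcal{L}^N_{t+\delta}-\mathcal{L}^N_{t}\bigr] \;=\; \mathbb{P}\bigl(t<\tau^1\leq t+\delta\bigr),
\]
which does not depend on $N$. The inner limit $\lim_{N\to\infty}$ then passes through the uniform bound trivially, leaving $\eta^{-1}\,\mathbb{P}(t<\tau^1\leq t+\delta)$.

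Finally, I would send $\delta\downarrow 0$: by the right-continuity at $t$ of the cumulative distribution function $F_{\tau^1}$ of the real-valued random variable $\tau^1$, we have $\mathbb{P}(t<\tau^1\leq t+\delta) = F_{\tau^1}(t+\delta)-F_{\tau^1}(t) \to 0$, which is the desired conclusion. There is no genuine obstacle in this argument; the only point worth emphasising is that no delicate knowledge of the law of $\tau^1$ is used beyond right-continuity of its CDF. If one wanted a sharper statement, one could additionally observe that $\mathbb{P}(\tau^1 = t)=0$ for every deterministic $t$, since $L^1$ has continuous paths while $\chi^1$ is a continuous (exponential) random variable independent of $(L^1_s)_{s\leq T}$, so that $\mathbb{P}(L^1_t=\chi^1)=0$; but this is not required for the result as stated.
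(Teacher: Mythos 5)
Your argument is correct, and it is genuinely different from (and more elementary than) the one in the paper. The paper does not apply Markov's inequality to $\mathcal{L}^N_{t+\delta}-\mathcal{L}^N_t$ directly; instead it first replaces the loss increment by the local-time increment $\frac{1}{N\kappa}\sum_i\bigl(L^i_{(t+\delta)\wedge\tau^i}-L^i_{t\wedge\tau^i}\bigr)$, controls the replacement error via the convergence $J^N_t(\phi)\to 0$ established in Proposition \ref{finiteEvolEq} (which in turn rests on an external result), and only then uses Markov's inequality together with continuity of the local time paths to send $\delta\to 0$. You shortcut all of this: since $\mathcal{L}^N_{t+\delta}-\mathcal{L}^N_t=\frac{1}{N}\sum_i\mathbbm{1}_{\{t<\tau^i\leq t+\delta\}}\geq 0$, Markov plus the fact that the killed particles are identically distributed reduces everything to $\eta^{-1}\,\mathbb{P}(t<\tau^1\leq t+\delta)$, a bound uniform in $N$, and right-continuity of the (sub-)distribution function of $\tau^1$ finishes the proof; this is valid even though $\tau^1$ may be infinite with positive probability, since only the finite values enter. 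What the paper's route buys is a quantitative modulus — the bound $\frac{2}{\eta\kappa}\mathbb{E}[L^i_{t+\delta}-L^i_t]$ can be estimated explicitly in $\delta$ — and it keeps the argument aligned with the local-time machinery already developed for $J^N$; your route buys simplicity and avoids the appeal to \cite{SojmarkWorkingPaper22}. For the statement as given, your proof suffices. Your closing aside that $\mathbb{P}(\tau^1=t)=0$ is also correct (the event $\{\tau^1=t\}$ is contained in $\{L^1_t=\chi^1\}$, which is null by independence and continuity of the law of $\chi^1$), but, as you note, it is not needed.
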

\begin{proof}
Using the triangle inequality we can estimate
\begin{align*}
    \mathbb{P} \left(\mathcal{L}^N_{t+\delta} - \mathcal{L}_t^N \geq \eta \right) \leq &\mathbb{P} \left( \left \lvert \mathcal{L}^N_{t+\delta} - \mathcal{L}_t^N -  \frac{1}{N\kappa} \sum_{i=1}^N\left( L_{t+ \delta \wedge \tau^i}^i -L_{t \wedge \tau^i}^i \right) \right \rvert \geq \frac{\eta}{2} \right) \\
    &+ \mathbb{P} \left( \frac{1}{N\kappa} \sum_{i=1}^N\left( L_{t+ \delta \wedge \tau^i}^i -L_{t\wedge \tau^i}^i \right) \geq \frac{\eta}{2}  \right).
\end{align*}
Applying the convergence result for $J_t^N(\phi)$ as $N \to \infty$ in Proposition \ref{finiteEvolEq}, which holds for all $\phi \in \mathcal{C}_0^{E,\kappa}(\R)$, at the two time points $t$ and $t+\delta$ we see that the first probability on the right-hand side vanishes as $N \to \infty$. Markov's inequality can be applied to the second term to obtain
\begin{equation*}
    \mathbb{P} \left( \frac{1}{N\kappa} \sum_{i=1}^N\left( L_{t+ \delta \wedge \tau^i}^i -L_{t\wedge \tau^i}^i \right) \geq \frac{\eta}{2}  \right) \leq \frac{2}{\eta \kappa} \mathbb{E} \left[L_{t+ \delta}^i - L_t^i  \right].
\end{equation*}
Since the particles $X^i$ are continuous, the local time processes $L^i$ are also continuous. Thus, we get convergence to $0$ as $\delta \to 0$ uniform in $N \geq 1$.
\end{proof}

\begin{proposition}[Tightness] \label{TightnessN}
The sequence of empirical-measure processes $(\nu^N)$ is tight on the space $(D_{\mathcal{S}^{\prime}},M1)$ and the sequence $(\nu^N,W^0)$ is tight on $(D_{\mathcal{S}^{\prime}},M1) \times (C_{\R}, \norm{\cdot}_{\infty})$.
\end{proposition}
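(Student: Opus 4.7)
The plan is to invoke a variant of Mitoma's criterion adapted to the M1 topology (as developed in \cite{Ledger2016}), which reduces tightness of $(\nu^N)$ on $(D_{\mathcal{S}^{\prime}},\text{M1})$ to tightness of the real-valued projections $(\langle \nu^N, \phi \rangle)$ on $(D_{\R},\text{M1})$ for each $\phi \in \mathcal{S}$. Joint tightness with $W^0$ then follows immediately, since a single fixed Brownian motion underlies every $\nu^N$.

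For an arbitrary $\phi \in \mathcal{S}$, I would repeat the It{\^o} calculation of Proposition \ref{finiteEvolEq} without imposing the Robin condition, using $X^i_{\tau^i}=0$ and the fact that $dL^i$ is supported on $\{X^i=0\}$, to obtain the decomposition
\[
\langle \nu^N_t, \phi \rangle = \langle \nu^N_0, \phi \rangle + A^N_t(\phi) + M^N_t(\phi) + I^N_t(\phi) + \phi^{\prime}(0)\Lambda^N_t - \phi(0)\mathcal{L}^N_t,
\]
where $A^N$ is a continuous process of bounded variation (from the drift and second-order integrals), $M^N$ is the continuous martingale driven by $W^0$, $I^N$ is the idiosyncratic martingale of Proposition \ref{finiteEvolEq}, and $\Lambda^N_t := \tfrac{1}{N}\sum_{i=1}^N L^i_{t\wedge\tau^i}$ is continuous and non-decreasing. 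Standard moment estimates, using the uniform boundedness of $\mu, \sigma, \rho$ in Assumption \ref{AssumptionSpace}, the trivial bound $\nu^N_t([0,\infty)) \leq 1$, and the quadratic variation computations in Proposition \ref{finiteEvolEq}, give C-tightness of $A^N, M^N, I^N$. For $\Lambda^N$, the scale transformation of Lemma \ref{Scale} reduces the problem to bounding the expected local time at zero of a reflected Brownian motion with bounded drift, which gives uniformly bounded expectations and hence C-tightness of its continuous non-decreasing paths.

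The remaining term $\mathcal{L}^N$ is $[0,1]$-valued and non-decreasing, so its M1 oscillation modulus collapses to a plain oscillation modulus (monotone jumps contribute zero to the M1 oscillation). Proposition \ref{EmpLossProp} then provides exactly the uniform oscillation control needed for M1-tightness of this piece. Combining this with C-tightness of every other summand, and using the classical fact that the sum of a C-tight sequence with an M1-tight sequence is itself M1-tight, yields M1-tightness of $\langle \nu^N, \phi \rangle$ in $(D_\R,\text{M1})$ for each $\phi \in \mathcal{S}$, and the proposition follows.

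The main obstacle is that the M1 topology is not preserved under addition of two general M1-tight sequences, so one must carefully ensure that every summand apart from the monotone $\mathcal{L}^N$ is in fact C-tight rather than merely M1-tight. A secondary technicality is verifying the equicontinuity of the family of $\mathcal{S}$-valued projections required for Mitoma's criterion in the M1 setting, which follows from the moment bounds of Section \ref{probEstSection}, notably the exponential tail estimate in Proposition \ref{tail} together with the assumption on $\nu_0$ in Assumption \ref{AssumptionInitial}.
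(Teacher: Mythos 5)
Your strategy coincides with the paper's in all essentials: both reduce, via the Mitoma-type criterion of \cite[Theorem 3.2]{Ledger2016}, to M1-tightness of the scalar projections $\langle \nu^N,\phi\rangle$ for each $\phi\in\mathcal{S}$; both isolate the monotone loss term $-\phi(0)\mathcal{L}^N$, handled by monotonicity together with Proposition \ref{EmpLossProp}; and both control the remainder through fourth-moment increment estimates on the particles. The only difference is bookkeeping: the paper groups everything except $-\phi(0)\mathcal{L}^N_t$ into the stopped empirical measure $\langle\hat{\nu}^N_t,\phi\rangle$ with $\hat{\nu}^N_t=\frac{1}{N}\sum_i\delta_{X^i_{t\wedge\tau^i}}$ and proves the single Kolmogorov bound $\mathbb{E}[\lvert\langle\hat{\nu}^N_t,\phi\rangle-\langle\hat{\nu}^N_s,\phi\rangle\rvert^4]=O(\lvert t-s\rvert^2)$ from $\mathbb{E}[\lvert X^i_t-X^i_s\rvert^4]=O(\lvert t-s\rvert^2)$, whereas you split that same quantity into its semimartingale pieces $A^N, M^N, I^N, \phi'(0)\Lambda^N$ and verify C-tightness of each; the two computations are interchangeable, since the paper's moment bound is itself obtained by exactly this drift/martingale/local-time splitting. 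Your route has the small advantage of making transparent why no boundary condition on $\phi$ is needed at this stage.

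One justification is too weak as written: uniformly bounded expectations of $\Lambda^N_T$ do not yield C-tightness of $\Lambda^N$ (a sequence of continuous non-decreasing processes with bounded means can degenerate towards a jump, which is M1- but not C-tight). What you need, and what is available, is the increment control $\mathbb{E}[\lvert L^i_t-L^i_s\rvert^4]=O(\lvert t-s\rvert^2)$ --- precisely the estimate the paper derives from the Skorokhod representation of the local time --- which gives $\mathbb{E}[\lvert\Lambda^N_t-\Lambda^N_s\rvert^4]=O(\lvert t-s\rvert^2)$ and hence C-tightness by Kolmogorov's criterion. This repair matters because, as you correctly observe, $\phi'(0)$ and $-\phi(0)$ may have opposite signs, so you cannot afford to leave two monotone pieces of opposite orientation merely M1-tight. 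Finally, the extra ``equicontinuity in $\phi$'' you flag as a secondary technicality is not required: the criterion of \cite{Ledger2016} asks only for tightness of each one-dimensional projection, the nuclearity of $\mathcal{S}$ doing the rest.
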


\begin{proof}
Using \cite[Theorem 3.2]{Ledger2016} it suffices to show that, for arbitrary $\phi \in \mathcal{S}$, the process $\langle \nu^N,\phi \rangle$ is tight on $D_{\R}$ in the M1 topology. For all $t \in [0,T]$, we decompose $\langle \nu_t^N, \phi \rangle$ as
\begin{equation} \label{TightnessDecomp}
    \langle \nu_t^N, \phi \rangle = \langle \hat{\nu}^N_t,\phi \rangle - \phi(0) \mathcal{L}_t^N,\quad \hat{\nu}^N_t \coloneqq \frac{1}{N} \sum_{i=1}^N \delta_{X_{t \wedge \tau^i}^i},
\end{equation}
with $\mathcal{L}^N_t$ as defined in (\ref{empiricalLossDefn}). The term $\phi(0) \mathcal{L}_t^N$ is monotone and does not contribute to the M1-modulus of continuity. By \cite[Propositions 4.1 and 4.2]{Ledger2016} and the arguments in the proof of \cite[Proposition 5.1]{HamblyLedger2017} it is then sufficient to establish that we have 
\begin{equation} \label{FirstTightness}
    \mathbb{E} \left[ \lvert \langle \hat{\nu}_t^N, \phi \rangle -\langle \hat{\nu}_s^N, \phi \rangle \rvert^4 \right] = O(\lvert t-s \rvert^2) \quad \text{as } \lvert t-s \rvert \to 0,
\end{equation}
and that for any $\varepsilon>0$
\begin{equation} \label{SecondTightness}
    \lim_{\delta \to 0} \lim_{N \to \infty} \mathbb{P} \left(\sup_{t \leq \delta} \lvert \langle \nu_t^N-\nu_0^N, \phi \rangle \rvert + \sup_{t \in (T-\delta,T)} \lvert \langle \nu_T^N - \nu_t^N,\phi \rangle \rvert > \varepsilon \right)=0.
\end{equation}

For any $s, t \in [0,T]$ it holds that
\begin{align*}
    \mathbb{E} \left[ \lvert \langle \hat{\nu}_t^N, \phi \rangle -\langle \hat{\nu}_s^N, \phi \rangle \rvert^4 \right] &\leq \frac{1}{N} \sum_{i=1}^N \mathbb{E} \left[ \lvert \phi(X_t^i) - \phi(X_s^i)
    \rvert^4 \right]  \leq  \norm{\phi}_{Lip}^4 \mathbb{E} \left[ \lvert X_t^i - X_s^i \rvert^4 \right]
\end{align*}
where $\norm{\phi}_{Lip}$ denotes the Lipschitz constant of the function $\phi$. It remains to deal with the expectation on the right-hand side. By using the inequality $(a+b+c)^4 \leq 27 (a^4 +b ^4 + c^4)$ we obtain
\begin{align*}
    \mathbb{E} \left[ \lvert X_t^i - X_s^i \rvert^4  \right]& \leq  27 \Big( \mathbb{E} \left[\left \lvert \int_s^t \mu(u,X_u^i)du \right\rvert^4 \right] +  \mathbb{E}  \left[\left\lvert \int_s^t \sigma(u,X_u^i) dB_u^i \right\rvert^4 \right] + \mathbb{E}  \left[\left\lvert L_t^i - L_s^i \right\rvert^4 \right] \Big). 
\end{align*}
Since the coefficient $\sigma$ is bounded, the stochastic integral is a martingale and we can apply the Burkholder--Davis--Gundy inequality \cite[Theorem IV.42.1]{rogersWilliams} to find that
\begin{equation*}
    \mathbb{E} \left[ \left\lvert \int_s^t \sigma(u,X_u^i) dB_u^i \right\rvert^4 \right] = O(\lvert t-s \rvert^2), \quad \text{as } \lvert t-s \rvert \to 0.
\end{equation*}
The boundedness assumption in Assumption \ref{AssumptionSpace} $(i)$ for the drift term $\mu$  yields
\begin{equation*}
    \mathbb{E} \left[ \left \lvert \int_s^t \mu(u,X_u^i)du \right\rvert^4 \right] = O(\lvert t-s \rvert^4), \quad \text{as $ \lvert t-s \rvert \to 0$}.
\end{equation*}
The estimate $\mathbb{E}  \left[\left\lvert L_t^i - L_s^i \right\rvert^4 \right] = O(\lvert t-s \rvert^2)$, as $\lvert t-s \rvert \to 0$, follows using the local time representation from the Skorokhod problem. 
Hence, we have $\mathbb{E} [ \lvert X_t^i - X_s^i \rvert^4 ] = O(\lvert t-s \rvert^2)$ as $ \lvert t-s \rvert \to 0$, and so we obtain (\ref{FirstTightness}). 

To establish \eqref{SecondTightness} it is enough to consider the small time interval $(0,\delta)$ as the result for the interval $(T-\delta, T)$ follows similarly. We can apply (\ref{TightnessDecomp}) to (\ref{SecondTightness}) to get
\begin{equation*}
    \mathbb{P} \left(\sup_{t \leq \delta} \lvert \langle \nu_t^N-\nu_0^N, \phi \rangle \rvert > \varepsilon  \right) \leq \mathbb{P} \left(\sup_{t \leq \delta} \lvert \langle \hat{\nu}_t^N-\hat{\nu}_0^N, \phi \rangle \rvert  \geq \frac{\varepsilon}{2} \right) + \mathbb{P} \left(\lvert \phi(0) \rvert \mathcal{L}^N_{\delta} \geq \frac{\varepsilon}{2} \right).
\end{equation*}

We can then again use the estimates in the poof of \cite[Proposition 5.1]{HamblyLedger2017} combined with the arguments for \eqref{FirstTightness} and obtain that the first term converges to $0$ as $\delta \to 0$, uniformly in $N \geq 1$. Proposition \ref{EmpLossProp} provides the same result for the second term and the tightness follows.
\end{proof}

At this stage we have only established that there exists a subsequence such that $(\nu^{N_k},W^0) \to (\nu^*,W^0) $ weakly in the space of $\mathcal{S}^\prime$-valued c{\`a}dl{\`a}g processes equipped with the M1 topology. However, we can deduce that the limiting processes actually take values in the space of sub-probability measures supported on $[0,\infty)$ by the same arguments as \cite[Proposition 5.4]{HamblyLedger2017}.
\begin{proposition} \label{subProbValued}
Let $(\nu^*,W^0)$ realise the limiting law of $(\nu^{N_k},W^0)$. Then each $\nu_t^*$ is a sub-probability measure supported on $[0,\infty)$ for all $t \in [0,T]$, with probability 1.
\end{proposition}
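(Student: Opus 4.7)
The plan is to transfer three properties of $\nu^N$---non-negativity as a distribution, total mass bounded by $1$, and support in $[0,\infty)$---to the limit $\nu^*$, using the M1 convergence together with the uniform tail estimate of Proposition \ref{tail}. By Skorokhod's representation theorem, I may work on a probability space on which $(\nu^{N_k},W^0)\to(\nu^*,W^0)$ almost surely in the product topology, so the convergence $\langle \nu^{N_k}_\cdot,\phi\rangle\to\langle \nu^*_\cdot,\phi\rangle$ in the M1 topology on $D_\mathbb{R}$ holds for every $\phi\in\mathcal{S}$, almost surely. Recall that M1 convergence in $D_\mathbb{R}$ implies pointwise convergence at every continuity point of the limit.

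First, I fix $\phi\in\mathcal{S}$ and consider a deterministic time $t\in[0,T]$. Let $\mathcal{T}_\phi\subset[0,T]$ be the (random) set of continuity points of $s\mapsto\langle \nu_s^*,\phi\rangle$, which is co-countable in $[0,T]$, hence dense. For $t\in\mathcal{T}_\phi$, the M1 convergence yields $\langle \nu_t^{N_k},\phi\rangle\to\langle \nu_t^*,\phi\rangle$. If $\phi\geq 0$, then $\langle \nu_t^{N_k},\phi\rangle\geq 0$ for all $k$, so $\langle \nu_t^*,\phi\rangle\geq 0$; if additionally $\phi$ is supported in $(-\infty,0)$, then $\langle \nu_t^{N_k},\phi\rangle=0$, giving $\langle \nu_t^*,\phi\rangle=0$. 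Since every $\phi\in\mathcal{S}$ with $\phi\geq 0$ (respectively supported in $(-\infty,0)$) may be used, and the bad continuity set is only countable, a standard separability argument over a countable dense family of test functions in $\mathcal{S}$ shows that, almost surely and simultaneously for all $t$ in a (random) co-countable subset of $[0,T]$, the distribution $\nu_t^*$ is non-negative and its restriction to $(-\infty,0)$ vanishes.

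Next, to upgrade $\nu_t^*$ from a positive tempered distribution to a genuine sub-probability measure, I combine the non-negativity with Proposition \ref{tail}. Choose a family $\chi_\lambda\in\mathcal{S}$ of smooth $[0,1]$-valued cutoffs with $\chi_\lambda\equiv 1$ on $[-1,\lambda]$, $\chi_\lambda\equiv 0$ outside $[-2,\lambda+1]$. Then $\langle\nu^{N_k}_t,\chi_\lambda\rangle\leq 1$, which passes to the limit at continuity points to give $\langle\nu^*_t,\chi_\lambda\rangle\leq 1$. Non-negativity of $\nu_t^*$ together with the uniform tail bound from Proposition \ref{tail} then let me apply the Riesz--Markov representation (via the Bochner--Minlos / de la Vall\'ee-Poussin type argument as in \cite[Proposition 5.4]{HamblyLedger2017}) to conclude that $\nu_t^*$ is a finite positive Borel measure with total mass at most $1$, and that this measure is supported on $[0,\infty)$ by the vanishing established above. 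The tail estimate is essential, because it is precisely what rules out the possibility that a positive distributional limit has ``mass escaping to infinity'' and thereby fails to be a sub-probability measure.

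Finally, I extend the conclusion from the co-countable set of continuity times to \emph{every} $t\in[0,T]$. Since $\nu^*$ takes values in $\mathcal{S}'$ along a c\`adl\`ag path and $\mathbf{M}_{\leq 1}(\mathbb{R}_{\geq 0})$ is closed under sequential limits in $\mathcal{S}'$ (again using the uniform tail control), right-continuity of the M1 path gives $\nu_t^*=\lim_{s\downarrow t,\,s\in\mathcal{T}} \nu_s^*$ in $\mathcal{S}'$, and the limit is a sub-probability measure on $[0,\infty)$. The main obstacle is this final step of handling the (at most countable) set of discontinuity times together with the jump from ``non-negative tempered distribution with uniform tail control and mass $\leq 1$'' to ``sub-probability measure'', which is precisely where the tail estimate from Proposition \ref{tail} and the careful use of the M1 topology (rather than J1) matter, since M1 allows the loss of mass at the boundary to occur as a monotone increasing process without destroying tightness.
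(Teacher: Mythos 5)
Your argument is correct and follows essentially the same route as the paper, which simply defers to the analogous result in \cite[Proposition 5.4]{HamblyLedger2017}: Skorokhod representation, pointwise M1 convergence at continuity times of the limit path, a countable separating family of non-negative test functions to get positivity and support on $[0,\infty)$ off a co-countable set of times, and right-continuity of the c\`adl\`ag path to cover all $t\in[0,T]$. The only quibble is that the tail estimate of Proposition \ref{tail} is not actually needed to conclude that a non-negative tempered distribution satisfying $\langle \nu_t^*,\chi_\lambda\rangle \leq 1$ for all cutoffs is a sub-probability measure (monotone convergence suffices, since mass can only be lost, never gained, in such a limit), but invoking it does no harm.
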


Next, we confirm that there is a suitable filtration for the limit points.
\begin{lemma} \label{BrownianFiltration}
Let $(\nu^*,W^0)$ be the limit of a subsequence $(\nu^{N_k},W^0)$. Then there is a filtration $\mathcal{F}^{\nu^*,W^0}$ so that $\nu^*$ is adapted and $W^0$ is an $\mathcal{F}^{\nu^*,W^0}$-Brownian motion.
\end{lemma}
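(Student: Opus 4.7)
The plan is to take $\mathcal{F}^{\nu^*,W^0}_t$ to be the (augmented, right-continuous) filtration generated by the pair $(\nu^*, W^0)$ up to time $t$; then adaptedness is automatic by construction, and $W^0$ is already assumed to be a Brownian motion in its own natural filtration, so the only content is to show that $W^0_t - W^0_s$ is independent of $\mathcal{F}^{\nu^*,W^0}_s$ for every $0 \leq s < t$. Equivalently, I would verify that for every bounded continuous functional $\Psi$ on $(D_{\mathcal{S}'}[0,s],\text{M1}) \times (C_{\mathbb{R}}[0,s],\|\cdot\|_\infty)$ and every $\lambda \in \mathbb{R}$,
\[
\mathbb{E}\bigl[e^{i\lambda(W^0_t - W^0_s)}\,\Psi(\nu^*|_{[0,s]}, W^0|_{[0,s]})\bigr] = e^{-\lambda^2(t-s)/2}\,\mathbb{E}\bigl[\Psi(\nu^*|_{[0,s]}, W^0|_{[0,s]})\bigr].
\]
This suffices because such $\Psi$ separate Borel probability measures on the underlying Polish path space, and hence a standard monotone class argument promotes the identity from bounded continuous to all bounded $\mathcal{F}^{\nu^*,W^0}_s$-measurable functionals.

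At the prelimit level the corresponding identity is immediate: for each $k$, the empirical measure $\nu^{N_k}$ is adapted to the filtration generated by $W^0, W^1,\ldots, W^{N_k}$, so $\sigma(\nu^{N_k}_r, W^0_r : r\leq s)$ is contained in that filtration evaluated at time $s$, and $W^0_t - W^0_s$ is independent of the latter by mutual independence of the driving Brownian motions. To transfer this identity to the limit, I would invoke the Skorokhod representation theorem, using separability of $(D_{\mathcal{S}'},\text{M1}) \times (C_{\mathbb{R}},\|\cdot\|_\infty)$, to realise the convergence $(\nu^{N_k}, W^0) \to (\nu^*, W^0)$ almost surely on a single probability space, after which bounded convergence would close the argument once continuity of the test functional along the convergent paths is secured.

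The main obstacle is precisely this last continuity: the restriction map $\nu \mapsto \nu|_{[0,s]}$ is not continuous in the M1 topology at càdlàg paths which jump at the endpoint $s$, so $\Psi(\nu^{N_k}|_{[0,s]},W^0|_{[0,s]}) \to \Psi(\nu^*|_{[0,s]},W^0|_{[0,s]})$ is not automatic from the weak convergence. To get around this, I would use the standard fact that the set $J_0 := \{r \in [0,T] : \mathbb{P}(\nu^* \text{ is discontinuous at } r) > 0\}$ is at most countable, so for $s \in [0,T]\setminus J_0$ the restriction map is almost surely continuous at $\nu^*$; bounded convergence then passes the prelimit identity to the limit for all such $s$, and the argument for $W^0_t - W^0_s$ is trivial since $W^0$ is continuous. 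This gives the required independence for $s,t \notin J_0$, and a right-continuity argument taking $s_n\downarrow s$ with $s_n \notin J_0$ (using right-continuity of $\mathcal{F}^{\nu^*,W^0}$ built into its definition, together with the right-continuity of the càdlàg paths) extends it to arbitrary $0 \leq s < t$. This completes the verification that $W^0$ is a Brownian motion with respect to $\mathcal{F}^{\nu^*,W^0}$.
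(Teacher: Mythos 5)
Your proposal is correct and follows essentially the same route as the paper: the paper defines $\mathcal{F}^{\nu^*,W^0}$ as the filtration generated by the projections $(\langle\nu^*_s,\phi\rangle,W^0_s)$, notes adaptedness is immediate, and defers the Brownian-motion property to the arguments of \cite[Section 4.1]{LedgerSojmark2021}, which are exactly the characteristic-function/weak-convergence argument you spell out (prelimit independence of increments, Skorokhod representation, handling M1-discontinuity of the restriction map on the cocountable set of continuity times, and right-continuity to cover all $s$). The only minor imprecision is that at the prelimit level the filtration generating $\nu^{N_k}$ must also include the initial conditions $X_0^i$ and the exponential clocks $\chi^i$, all of which are nevertheless independent of the future increments of $W^0$, so the argument is unaffected.
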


\begin{proof}
On the sample space $\Omega = D_{\mathcal{S}^{\prime}} \times C_{\R}$ we define the filtration
\begin{equation} \label{filtrationDefn}
    \mathcal{F}_t^{\nu^*,W^0} \coloneqq \sigma \left( \{(\nu^*,W^0) \mapsto (\langle \nu^*_s,\phi \rangle,W^0_s)  : s <t, \phi \in \mathcal{S}\} \right).
\end{equation}
This is a sensible filtration because the projections $\pi^{\phi^1,\cdot, \phi^n}_{t_1,\cdot,t_n}$ given by
\begin{equation*}
    \pi^{\phi^1,\cdot, \phi^n}_{t_1,\cdot,t_n}: D_{\mathcal{S}^\prime} \to \R^n, \pi^{\phi^1,\cdot, \phi^n}_{t_1,\cdot,t_n}(\xi) = (\xi_{t_1}(\phi_1),\ldots, \xi_{t_n}(\phi_n)),
\end{equation*}
generate the Borel sets on $D_{\mathcal{S}^{\prime}}$ as shown in \cite{Ledger2016}. It is clear that $\nu^*$ is adapted to $\mathcal{F}^{\nu^*,W^0}$. It remains to show that $W^0$ is a Brownian motion in this filtration. This follows with the same arguments as in \cite[Section 4.1]{LedgerSojmark2021}.
\end{proof}

To show that limit points of the empirical-measure process $\nu^N$ solve the SPDE we will prove that solutions of the finite evolution equation of Proposition \ref{finiteEvolEq} converge to solutions of our SPDE. This will be done using the martingale approach of \cite{HamblyLedger2017} and \cite{HamblySojmark2019}. This approach easily allows for a higher level a generality that also covers an extension considered in Section 7. The martingale components  are given by
\begin{align}
\begin{split} \label{martingaleComponents}
     M^{\phi}(\xi)(t) \coloneqq &\langle \xi_t,\phi \rangle -\langle \xi_0,\phi \rangle - \int_0^t \langle \xi_s,\mu(s,\cdot) \phi^\prime \rangle ds\\
     &- \frac{1}{2} \int_0^t \langle \xi_s,\sigma^2(s,\cdot) \phi^{\prime \prime} \rangle ds,\\
     S^{\phi}(\xi)(t) \coloneqq &M^{\phi}(\xi)(t)^2 -\int_0^t \langle \xi_s,\sigma(s,\cdot)\rho(s) \phi^\prime \rangle^2ds, \\
    C^{\phi}(\xi,w)(t) \coloneqq &M^{\phi}(\xi)(t) \cdot w(t) - \int_0^t \langle \xi_s,\sigma(s,\cdot)\rho(s) \phi^\prime \rangle ds
\end{split}
\end{align}
for $\xi \in D_{\mathcal{S}^{\prime}}$, $w \in C_{\R}$ and $\phi \in \mathcal{C}^{E,\kappa}_0(\mathbb{R})$. Using the results from \cite[Section 5]{HamblyLedger2017} we obtain the weak convergences
\begin{align*}
    M^{\phi}(\nu^{N_k})(t) &\to M^{\phi}(\nu^*)(t), \\
    S^{\phi}(\nu^{N_k})(t) & \to S^{\phi}(\nu^*)(t), \\
    C^{\phi}(\nu^{N_k},W^0)(t) &\to C^{\phi}(\nu^*,W^0)(t) \qquad{ \text{in } \R}.
\end{align*}
on a deterministic cocountable subset of $[0,T]$ for all $\phi \in \mathcal{C}^{E,\kappa}_0(\mathbb{R})$. Then, the convergence of the solutions follows as in \cite[Proposition 5.11]{HamblyLedger2017}.

\begin{proposition}[Evolution equation] \label{convergenceToEv}
 Let $(N_k)$ be a subsequence such that the weak convergence $(\nu^{N_k},W^0) \to (\nu^*,W^0)$ holds. Then, for every $\phi \in \mathcal{C}^{E,\kappa}_0(\mathbb{R})$ the process $M^{\phi}(\nu^*),S^{\phi}(\nu^*)$ and $C^{\phi}(\nu^*,W)$ are martingales. As a consequence, $\nu^*$ and $W^0$ satisfy the limit SPDE from Theorem \ref{existenceThm}.
\end{proposition}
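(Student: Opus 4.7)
The plan is the standard martingale problem approach: show that $M^\phi(\nu^*)$, $S^\phi(\nu^*)$, and $C^\phi(\nu^*,W^0)$ are genuine $\mathcal{F}^{\nu^*,W^0}$-martingales by pushing the approximate martingale identities for the finite system through the already-stated weak convergences, and then identify $M^\phi(\nu^*)$ with the stochastic integral against $W^0$ by matching brackets. By Proposition \ref{finiteEvolEq}, for every $\phi \in \mathcal{C}_0^{E,\kappa}(\mathbb{R})$,
\begin{equation*}
M^\phi(\nu^N)(t) = \int_0^t \langle \nu^N_s,\rho_s\sigma(s,\cdot)\phi'\rangle dW^0_s + I^N_t(\phi) + J^N_t(\phi),
\end{equation*}
so $M^\phi(\nu^N)$ is a genuine $\mathcal{F}$-martingale up to errors $I^N, J^N$ vanishing in $L^2$ uniformly on $[0,T]$. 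Decomposing $S^\phi(\nu^N)$ and $C^\phi(\nu^N, W^0)$ around this stochastic integral and using It\^o's isometry together with independence of $W^0$ from $W^1,\ldots,W^N$, one sees that both are exact $\mathcal{F}$-martingales up to errors vanishing in $L^1$ (Cauchy--Schwarz applied to the cross-terms between $I^N+J^N$ and the $W^0$-stochastic integral).

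Next, I would fix $0 \le s < t$ in the deterministic cocountable subset $\mathcal{T} \subset [0,T]$ on which the stated weak convergences $\Pi(\nu^{N_k})(\cdot)\to\Pi(\nu^*)(\cdot)$ hold, and take a bounded continuous functional $h$ of the $(\nu^*,W^0)$-path up to time $s$ of the form generating $\mathcal{F}_s^{\nu^*,W^0}$ in \eqref{filtrationDefn}. For $\Pi \in \{M^\phi, S^\phi, C^\phi\}$, the finite-$N$ approximate martingale property gives
\begin{equation*}
\mathbb{E}\bigl[(\Pi(\nu^{N_k})(t) - \Pi(\nu^{N_k})(s))\,h\bigr] = o_{N_k}(1).
\end{equation*}
Combined with uniform-in-$k$ $L^{2+\epsilon}$ bounds obtained by Burkholder--Davis--Gundy applied to the $W^0$-stochastic integral (using $\nu^{N_k}_t([0,\infty))\le 1$ together with boundedness of $\sigma, \rho, \phi', \phi''$), this licences passage to the weak limit and yields $\mathbb{E}[\Pi(\nu^*)(t)-\Pi(\nu^*)(s)\mid\mathcal{F}_s^{\nu^*,W^0}]=0$ for $s<t$ in $\mathcal{T}$. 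Right-continuity of $\Pi(\nu^*)$ and countability of $\mathcal{T}^c$ then extend the martingale property to all $0 \le s < t \le T$.

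Given the three martingale properties, one reads off $\langle M^\phi(\nu^*)\rangle_t = \int_0^t \langle \nu^*_r, \rho_r\sigma_r\phi'\rangle^2 dr$ and $\langle M^\phi(\nu^*), W^0\rangle_t = \int_0^t \langle \nu^*_r, \rho_r\sigma_r\phi'\rangle dr$; absolute continuity of the bracket forces $M^\phi(\nu^*)$ to be continuous, and the bracket of $M^\phi(\nu^*) - \int_0^\cdot \langle\nu^*_r, \rho_r\sigma_r\phi'\rangle dW^0_r$ vanishes identically, so these two processes coincide. Unpacking the definition of $M^\phi$ then gives exactly the SPDE \eqref{MeasureSPDE}. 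The main technical obstacle is the uniform integrability used to pass $S^\phi$ to the weak limit, since that process is quadratic in $M^\phi$; securing the higher-moment control of the local-time-driven error $J^N(\phi)$ needed for this (beyond the $L^2$ bound in Proposition \ref{finiteEvolEq}), and similarly for $I^N(\phi)$ via Burkholder--Davis--Gundy, is where the real work lies.
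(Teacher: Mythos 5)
Your proposal is correct and follows essentially the same route as the paper, which simply invokes the martingale components $M^{\phi},S^{\phi},C^{\phi}$, their weak convergence on a deterministic cocountable subset of $[0,T]$, and the identification argument of \cite[Proposition 5.11]{HamblyLedger2017}; you have merely written out in full the conditioning, uniform-integrability, and bracket-matching steps that the paper delegates to that reference. Your closing remark correctly locates the only real technical work (higher-moment control of $I^N,J^N$ needed to pass the quadratic functional $S^{\phi}$ to the limit), which is exactly what the cited results supply.
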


\subsection{Regularity of the Limit Solution} \label{regularitySection}

We now show that the solutions we constructed as limits from the particle system satisfy the regularity conditions of Definition \ref{regularityClass} using the estimates from Section~\ref{probEstSection}. 

\begin{proposition} \label{limitRegularity}
Let $(\nu^*,W^0)$ be a limit point of the sequence of empirical-measure processes. Then the process $(\nu_t^*)_{t \in [0,T]}$ is in the class $\Lambda$.
\end{proposition}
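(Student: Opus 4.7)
The plan is to verify the four conditions of Definition \ref{regularityClass} in turn: condition (i) is immediate from Proposition \ref{subProbValued}, while conditions (ii)--(iv) will be obtained by transferring the uniform-in-$N$ estimates already established in Section \ref{probEstSection} to the weak limit $\nu^*$. The common device is to bracket indicator functions of intervals from above by Schwartz functions, exploit M1-continuity of the evaluation maps $\xi\mapsto \langle\xi,\phi\rangle$, and let the bracketing tighten. Because $\nu^*$ and each $\nu^{N_k}$ are supported on $[0,\infty)$, widening a one-sided bump slightly beyond the boundary is harmless: only the exponent of $\varepsilon$ (or of $e^{-\alpha\lambda}$) will matter, and overall constants can absorb any dilation factor.

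For conditions (ii) and (iv), I would fix a non-negative $\phi\in\mathcal{S}$ and invoke Skorokhod representation to assume $\nu^{N_k}\to\nu^*$ almost surely in $(D_{\mathcal{S}'},\mathrm{M1})$, so that $\langle \nu^{N_k}_t,\phi\rangle \to \langle \nu^*_t,\phi\rangle$ at the continuity points of the limit path, a co-countable (and hence Lebesgue-conull) subset of $[0,T]$. Two applications of Fatou's lemma, first in time against Lebesgue measure and then on the probability space, give
\[
\mathbb{E}\Bigl[\int_0^T \langle \nu^*_t, \phi\rangle\, dt\Bigr] \leq \liminf_{k\to\infty}\, \mathbb{E}\Bigl[\int_0^T \langle \nu^{N_k}_t, \phi\rangle\, dt\Bigr].
\]
Choosing $\phi$ to sandwich $\mathbbm{1}_{(a,b)}$ between $\mathbbm{1}_{(a,b)}\leq \phi\leq \mathbbm{1}_{(a-\delta,b+\delta)}$ and shrinking $\delta\downarrow 0$ recovers (iv) from Lemma \ref{FiniteSpatial}; the analogous argument with $\mathbbm{1}_{(\lambda,\infty)}$ and Proposition \ref{tail} recovers (ii).

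The heart of the argument is condition (iii), which is where the two-particle estimate enters. By exchangeability of the particles,
\[
\mathbb{E}\bigl[\nu_t^N(0,\varepsilon)^2\bigr] = \tfrac{1}{N}\,\mathbb{P}(0<X_t^1<\varepsilon,\,t<\tau^1) + \tfrac{N-1}{N}\,\mathbb{P}(0<X_t^1<\varepsilon,\,t<\tau^1,\,0<X_t^2<\varepsilon,\,t<\tau^2),
\]
so the diagonal term is $O(N^{-1}\varepsilon^\delta)$ by Lemma \ref{removeDrift}, while the off-diagonal term is controlled by $C_q\, t^{-1/q}\varepsilon^{2/q}$ for any $q>1$ via Proposition \ref{boundaryEst}. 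I would pick $q\in(1,2)$ and set $\gamma:=2/q-1\in(0,1)$; since $t\mapsto t^{-1/q}$ is integrable on $[0,T]$, one gets
\[
\mathbb{E}\Bigl[\int_0^T \nu_t^N(0,\varepsilon)^2\, dt\Bigr] \leq C_\gamma\, \varepsilon^{1+\gamma} + O(N^{-1}).
\]
To transfer this to $\nu^*$, I would fix $\phi_\varepsilon\in\mathcal{S}$ with $\mathbbm{1}_{(0,\varepsilon)}\leq \phi_\varepsilon\leq \mathbbm{1}_{(-\varepsilon,2\varepsilon)}$ and apply the double-Fatou scheme to the continuous functional $\xi\mapsto\langle \xi,\phi_\varepsilon\rangle^2$. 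Using the support constraint to bound $\langle \nu_t^{N_k},\phi_\varepsilon\rangle \leq \nu_t^{N_k}(0,2\varepsilon)$ on the right and $\nu^*_t(0,\varepsilon)\leq \langle \nu^*_t,\phi_\varepsilon\rangle$ on the left,
\[
\mathbb{E}\Bigl[\int_0^T \nu^*_t(0,\varepsilon)^2\, dt\Bigr] \leq \liminf_{k\to\infty} \mathbb{E}\Bigl[\int_0^T \nu_t^{N_k}(0,2\varepsilon)^2\, dt\Bigr] = O(\varepsilon^{1+\gamma}).
\]

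The hardest step is condition (iii): not only does it need the two-particle bound of Proposition \ref{boundaryEst} rather than the simpler first-moment estimates used elsewhere, it also requires pushing a \emph{square} through the weak limit, which rules out a direct Portmanteau argument. The resolution above hinges on the smooth one-sided bracketing together with the support constraint, so that $\langle \nu,\phi_\varepsilon\rangle$ is pinched between $\nu(0,\varepsilon)$ and $\nu(0,2\varepsilon)$; the doubling of the interval then contributes only the benign constant factor $2^{1+\gamma}$ to the final estimate, leaving the decay exponent intact.
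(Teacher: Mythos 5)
Your proposal is correct and follows essentially the same route as the paper: condition (i) from Proposition \ref{subProbValued}, conditions (ii) and (iv) by smooth one-sided bracketing of indicators and passing the uniform-in-$N$ estimates of Lemma \ref{FiniteSpatial} and Proposition \ref{tail} through the weak M1 limit, and condition (iii) by bracketing the square and invoking the two-particle bound of Proposition \ref{boundaryEst} with $q\in(1,2)$. The only cosmetic differences are that you use Skorokhod representation plus a double Fatou argument where the paper cites the weak convergence of the time-integrated functional directly, and that you make explicit the $O(N^{-1})$ diagonal term in the second moment, which the paper's display silently discards.
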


\begin{proof}
In Proposition \ref{subProbValued} we already showed that $\nu^*$ takes values in the space of sub-probability measures supported on $[0, \infty)$. It remains to prove the regularity conditions on the sub-probability measures. 

Consider a finite open interval $I = (x,y) \subseteq \R$. Take $\delta >0$ and any $\phi_{\delta} \in \mathcal{S}$ such that $\phi_{\delta}=1$ on I, $\phi_{\delta}=0$ on $(-\infty,x-\delta) \cup (y+\delta,\infty)$ and $\phi_{\delta} \in (0,1)$ otherwise. From $(\nu^{N_k},W) \to (\nu^*,W)$ weakly we get that $\int_0^t \langle \nu_s^{N_k},\phi_{\delta}\rangle ds \to \int_0^t \langle \nu_s^*,\phi_{\delta}\rangle $ds weakly by \cite[Theorem 11.5.1]{whitt2002}. We have
\begin{equation*}
    \mathbb{E} \left[ \int_0^T \nu_t^*(I)dt \right] \leq \mathbb{E} \left[ \int_0^T \langle \nu_t^*,\phi_{\delta}\rangle dt \right] = \lim_{k \to \infty} \mathbb{E} \left[ \int_0^T \langle \nu_t^{N_k},\phi_{\delta} \rangle dt \right].
\end{equation*}
Considering the bound in Lemma \ref{FiniteSpatial} and taking the limit $\delta \to 0$ yields \ref{assptSpat} in the definition of the class $\Lambda$.

Using the same approach we obtain that for $\varepsilon >0$
\begin{align*}
    \mathbb{E} \left[\nu_t^*(0,\varepsilon)^2 \right] & \leq \liminf_{N \to \infty} \frac{1}{N^2}  \sum_{i=1}^N \sum_{j \neq i,j=1}^N \mathbbm{P}(0 < X_t^i < \varepsilon+\delta, 0 < X_t^j < \varepsilon+\delta).
\end{align*}
By employing the estimate in Proposition \ref{boundaryEst} and taking $\delta$ to zero we obtain \ref{assptBound} because $q>1$ ensures the necessary integrability.

For the exponential-tails condition consider $I = (\lambda, \eta)$ for $\eta >0$, $\lambda < \eta$. The approach for \ref{assptSpat} yields
\begin{align*}
    \mathbb{E} \left[ \int_0^T \nu_t^*(\lambda,\eta)dt \right] & \leq \liminf_{k \to \infty} \mathbb{E} \left[ \int_0^T \nu_t^{N_k}(\lambda-\delta,\eta+\delta)dt \right]\\
    &\leq \liminf_{k \to \infty} \mathbb{E} \left[ \int_0^T \nu_t^{N_k}(\lambda-\delta,\infty)dt \right]= o(e^{-\alpha (\lambda-\delta)}),
\end{align*}
by using Proposition \ref{tail}. Taking the limit $\delta \to 0$ and then $\eta \to \infty$ yields \ref{assptTail}.
\end{proof}

Combining the results from Proposition \ref{convergenceToEv} and Proposition \ref{limitRegularity} 
we can conclude our first main result Theorem \ref{existenceThm}. 

\section{Uniqueness of the SPDE} \label{Sec:LinUniq}

In this section we prove uniqueness of solutions to the SPDE in the class $\Lambda$. Before presenting the uniqueness proof we show how we can derive an alternative weak boundary formulation for the SPDE \eqref{MeasureSPDE} that is helpful in dealing with boundary terms appearing in the uniqueness proof. 

To overcome the difficulties of working with solutions of low regularity, we regularize the weak solutions to our SPDE through convolution with a nice mollifier. The elastic heat kernel $G_{\varepsilon}^{E, \kappa}(x,y)$, defined in (\ref{convDefn}), is a natural candidate for this, since it is an element of the space of test functions $\mathcal{C}^{E,\kappa}_0(\mathbb{R})$ and since its explicit form makes it easy to perform manipulations. This leads us to consider the convolution operator $T_{\varepsilon}^{E,\kappa}$ given by $(T_{\varepsilon}^{E,\kappa}\zeta)(x) = \langle \zeta , G_{\varepsilon}^{E,\kappa}(x,\cdot) \rangle$. While we use a different operator, we take the same approach as in \cite{HamblyLedger2017, HamblySojmark2019} and work in the space $H^{-1}$, the dual of the first Sobolev space. This approach now rests on the following crucial inequality, connecting $L^2$ estimates for the regularized process with an $H^{-1}$ estimate for the non-regularized  process:
\begin{equation} \label{H-1Estimate+}
    \norm{\zeta}_{-1} \leq C \liminf_{\varepsilon \to 0} \left(\norm{\partial_x^{-1}T_{\varepsilon}^{E,\kappa}\zeta}_{L^2(0,\infty)}+ \left\lvert \int_0^{\infty}T_{\varepsilon}^{E,\kappa}\zeta(y)dy \right\rvert \right),
\end{equation}
for any $\zeta \in H^{-1}$, where $\partial^{-1}_x$ denotes the anti-derivative as defined in  (\ref{antiDeriv}). The proof of this can be found in Lemma \ref{H1lemma} and further details are reserved for Appendix \ref{AppendixH1}.

Differently from \cite{HamblyLedger2017, HamblySojmark2019}, here we work with $H^{-1}$ as the dual of $H^{1}(0,\infty)$ rather than $H^{1}_0(0,\infty)$. In our proof of uniqueness, we will show that two different solutions to the SPDE, say $\nu$ and $\tilde{\nu}$, must satisfy $\mathbb{E}[\norm{\nu_t - \tilde{\nu}_t}_{-1}]=0$ for all $t\in[0,T]$. In doing so, the first term on the right-hand side of \eqref{H-1Estimate+} is only concerned with the difference $\zeta = \nu_t - \tilde{\nu}_t$ restricted to $(0,\infty)$, while forcing the second term to zero ensures that potential point masses of $\nu_t$ and $\tilde{\nu}_t$ at the origin must be of the same value (recall the solutions are sub-probability measures, which a priori need not have the same mass at any given time). As discussed in Section 2, the unique solution turns out to have a density, but we are not making any such assumptions on our class of solutions at this point.

Before proceeding to show uniqueness, we first give two auxiliary results that will help us deal with the behaviour of our estimates near the boundary.

\begin{lemma} \label{boundaryEstimate1}
Let $\nu$ be a process in the class $\Lambda$ and $p_\varepsilon$ the Gaussian heat kernel defined in \eqref{GaussianHeatKernel}. Then we have 
\begin{equation*}
    \mathbb{E} \left[ \int_0^T \int_0^{\infty} \left( \int_0^{\infty} p_{\varepsilon}(x+y)\nu_t(dy) \right)^2dxdt \right] \to 0, \qquad{\text{ as } \varepsilon \to 0}.
\end{equation*}
\end{lemma}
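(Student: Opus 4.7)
The plan is to split the inner $y$-integral at a cutoff $\varepsilon^\alpha$ for a suitably chosen $\alpha\in(0,1/2)$, and exploit the boundary decay property (iii) of the class $\Lambda$ to handle the part close to the boundary while using the rapid decay of the Gaussian to handle the part away from the boundary. Concretely, fix $\gamma>0$ as given by property (iii) and choose $\alpha\in\bigl(\tfrac{1}{2(1+\gamma)},\tfrac12\bigr)$, which is possible since $\gamma>0$. Writing $A_\varepsilon(t,x):=\int_0^{\varepsilon^\alpha} p_\varepsilon(x+y)\nu_t(dy)$ and $B_\varepsilon(t,x):=\int_{\varepsilon^\alpha}^\infty p_\varepsilon(x+y)\nu_t(dy)$, the elementary bound $(A+B)^2\leq 2A^2+2B^2$ lets me treat the two contributions separately.

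For the near-boundary term $A_\varepsilon$, I would use that $p_\varepsilon$ is decreasing on $[0,\infty)$ to obtain $A_\varepsilon(t,x)\leq p_\varepsilon(x)\,\nu_t(0,\varepsilon^\alpha)$, giving
\begin{equation*}
\int_0^\infty A_\varepsilon(t,x)^2\,dx \;\leq\; \nu_t(0,\varepsilon^\alpha)^2 \int_0^\infty p_\varepsilon(x)^2\,dx \;=\; \frac{1}{4\sqrt{\pi\varepsilon}}\,\nu_t(0,\varepsilon^\alpha)^2.
\end{equation*}
Taking expectation and integrating in $t$, property (iii) of Definition \ref{regularityClass} yields
\begin{equation*}
\mathbb{E}\Bigl[\int_0^T\!\!\int_0^\infty A_\varepsilon(t,x)^2\,dx\,dt\Bigr] \;=\; O\bigl(\varepsilon^{\alpha(1+\gamma)-1/2}\bigr),
\end{equation*}
which vanishes as $\varepsilon\downarrow 0$ by our choice of $\alpha$.

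For the far term $B_\varepsilon$, I would apply Cauchy--Schwarz (using $\nu_t(\varepsilon^\alpha,\infty)\leq 1$) and then Fubini to get
\begin{equation*}
\int_0^\infty B_\varepsilon(t,x)^2\,dx \;\leq\; \int_{\varepsilon^\alpha}^\infty\!\int_y^\infty p_\varepsilon(u)^2\,du\,\nu_t(dy).
\end{equation*}
Using the standard Gaussian tail bound gives $\int_y^\infty p_\varepsilon(u)^2\,du \leq \frac{1}{4\pi y}\,e^{-y^2/\varepsilon}$ for $y>0$, and since $y\geq\varepsilon^\alpha$ with $2\alpha-1<0$, this is bounded uniformly in $y$ by $\frac{1}{4\pi\varepsilon^\alpha}\exp(-\varepsilon^{2\alpha-1})$, which decays faster than any power of $\varepsilon$. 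Combined with $\nu_t(\varepsilon^\alpha,\infty)\leq 1$, this gives super-polynomial decay of the expectation, and combining the two estimates completes the proof.

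The main obstacle is balancing the two exponents: the near-boundary bound is only useful because $\varepsilon^{\alpha(1+\gamma)}$ beats the singular factor $\varepsilon^{-1/2}$ coming from $\Vert p_\varepsilon\Vert_{L^2}^2$, which is precisely where the strictly positive $\gamma$ in property (iii) of class $\Lambda$ is essential. Without the $\varepsilon^{1+\gamma}$ decay of the second moment near the boundary --- rather than the weaker $\varepsilon$ decay one would have without information on concentration --- the argument would fail.
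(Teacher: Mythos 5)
Your proposal is correct and follows essentially the same route as the paper: both split the inner $y$-integral at $\varepsilon^{\alpha}$ with $\alpha$ chosen in the window $\bigl(\tfrac{1}{2(1+\gamma)},\tfrac12\bigr)$, control the near-boundary piece via property (iii) of the class $\Lambda$ against the $\varepsilon^{-1/2}$ singularity of $\Vert p_\varepsilon\Vert_{L^2}^2$, and dispose of the far piece by super-polynomial Gaussian decay. The only cosmetic difference is how the $x$- and $y$-dependence are decoupled (the paper factors out $e^{-x^2/\varepsilon}$ via $(x+y)^2\geq x^2+y^2$ following \cite[Lemma 7.6]{HamblyLedger2017}, whereas you use monotonicity of $p_\varepsilon$ for the near part and Cauchy--Schwarz plus Fubini for the far part), yielding the same rates.
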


\begin{proof}
Arguing as in \cite[Lemma 7.6]{HamblyLedger2017}  we have
\begin{align*}
    \lvert \langle \nu_t,p_{\varepsilon}(x+\cdot) \rangle \rvert & \leq e^{-x^2/\varepsilon} \int_0^{\infty} p_{\varepsilon}(y)\nu_t(dy) \\
    & \leq c_1 e^{-x^2/\varepsilon} \varepsilon^{-1/2} [\nu_t(0,\varepsilon^\eta)+\exp(-\varepsilon^{2 \eta -1}/2)],
\end{align*}
for some $\eta \in (0,\frac{1}{2})$ and $c_1 >0$. Squaring and integrating over $x>0$ gives
\begin{equation*}
    \int_0^{\infty} \lvert \langle \nu_t,p_{\varepsilon}(x+\cdot) \rangle \rvert^2dx \leq c_2 \varepsilon^{-1/2} [\nu_t(0,\varepsilon^{\eta})^2+\exp(-\varepsilon^{2\eta-1})],
\end{equation*}
\noindent
for some $c_2 >0$. Since $\nu$ is in the class $\Lambda$, the boundary decay yields a $\gamma>0$ so that
\begin{equation*}
    \mathbbm{E} \left[ \int_0^T \int_0^{\infty} \lvert \langle \nu_t,p_{\varepsilon}(x+ \cdot) \rangle \rvert^2dx \right] = O(\varepsilon^{\eta(1+\gamma) - \frac{1}{2}}) + O(\varepsilon^{-\frac{1}{2}}\exp(-\varepsilon^{2\eta-1})).
\end{equation*}
Therefore, we get the desired estimate by choosing $\eta$ such that $(1+\gamma)^{-1}<2\eta<1$.
\end{proof}

\begin{lemma} \label{BoundaryEstimate2}
Let $\nu$ be a process in the class $\Lambda$ and let $g_{\varepsilon}^{E,\kappa}$ denote the correction term in the definition of the elastic heat kernel \eqref{ElasticKernel}. Then we have that
\begin{equation*}
    \mathbb{E} \left[ \int_0^T \int_0^{\infty} \left( \int_0^{\infty} g_{\varepsilon}^{E,\kappa}(x,y) \nu_t(dy) \right)^2 dxdt \right] \to 0, \qquad{\text{for } \varepsilon \to 0}.
\end{equation*} 
\end{lemma}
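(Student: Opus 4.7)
The approach mirrors the proof of Lemma \ref{boundaryEstimate1} essentially verbatim, with the correction term $g_\varepsilon^{E,\kappa}(x,y)$ playing the role that $p_\varepsilon(x+y)$ played there. The only new ingredient is a suitable pointwise bound on $g_\varepsilon^{E,\kappa}$ that exhibits the same Gaussian-type decay in $x+y$.

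First, I would derive a pointwise estimate on $|g_\varepsilon^{E,\kappa}(x,y)|$. From the explicit form of the elastic heat kernel in \eqref{ElasticKernel}, the correction term is, up to sign, an expression of the type
\[
2\kappa\, e^{\kappa(x+y) + \kappa^2 \varepsilon /2}\, \mathrm{erfc}\!\left(\tfrac{x+y}{\sqrt{2\varepsilon}} + \kappa \sqrt{\tfrac{\varepsilon}{2}}\right).
\]
Standard estimates on the complementary error function, in particular $\mathrm{erfc}(u) \leq e^{-u^2}/(\sqrt{\pi}\, u)$ for $u>0$, yield a bound of the form
\[
|g_\varepsilon^{E,\kappa}(x,y)| \leq C_\kappa\, e^{-(x+y)^2/(c\varepsilon)},
\]
for constants $C_\kappa, c > 0$ independent of $\varepsilon$ (for $\varepsilon$ small enough). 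This is the direct analogue of the bound on $p_\varepsilon(x+y)$ employed in Lemma \ref{boundaryEstimate1}.

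Once this pointwise bound is available, the remainder of the argument is a near-verbatim repetition of the previous lemma. Splitting off the $x$-dependence via the Gaussian factor and then splitting the inner integral at $y = \varepsilon^\eta$ for some $\eta \in (0,1/2)$ gives
\[
|\langle \nu_t, g_\varepsilon^{E,\kappa}(x,\cdot)\rangle| \leq c_1\, e^{-x^2/(2c\varepsilon)}\bigl[\nu_t(0,\varepsilon^\eta) + \exp(-\varepsilon^{2\eta-1}/(2c))\bigr].
\]
Squaring, integrating over $x \in (0,\infty)$, taking expectation, and using the boundary decay condition of the class $\Lambda$, namely $\mathbb{E}\bigl[\int_0^T \nu_t(0,\varepsilon^\eta)^2 dt\bigr] = O(\varepsilon^{\eta(1+\gamma)})$ for some $\gamma > 0$, produces a total bound of order
\[
O(\varepsilon^{\eta(1+\gamma) - 1/2}) + O\bigl(\varepsilon^{-1/2} \exp(-\varepsilon^{2\eta-1}/(2c))\bigr).
\]
Choosing any $\eta \in \bigl(1/(2(1+\gamma)),\, 1/2\bigr)$ then drives both terms to zero as $\varepsilon \downarrow 0$, exactly as in Lemma \ref{boundaryEstimate1}.

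The main obstacle is really just the first step: carefully extracting the Gaussian-type pointwise bound on $g_\varepsilon^{E,\kappa}$ from the explicit form of the elastic kernel. This is a calculus exercise rather than a conceptual difficulty, and all the stochastic and regularity content of the lemma is captured by the boundary decay property encoded in Definition \ref{regularityClass}.
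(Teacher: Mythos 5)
Your overall strategy works, but there are two points worth flagging. First, the pointwise bound you rely on is not justified by the inequality you cite. Writing $u = (x+y+\kappa\varepsilon)/\sqrt{2\varepsilon}$, the estimate $\mathrm{Erfc}(u)\leq e^{-u^{2}}/(\sqrt{\pi}\,u)$ gives
\[
g_{\varepsilon}^{E,\kappa}(x,y)\;\leq\;\frac{2\kappa}{\sqrt{\pi}\,u}\,e^{-(x+y)^{2}/2\varepsilon},
\]
and since $u$ can be as small as $\kappa\sqrt{\varepsilon/2}$ (when $x+y\to 0$), this only yields a prefactor of order $\varepsilon^{-1/2}$, not a constant $C_{\kappa}$ independent of $\varepsilon$. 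The paper's Lemma \ref{ElasticCorrectionTermEstimate} obtains the clean bound $g_{\varepsilon}^{E,\kappa}(x,y)\leq \kappa e^{-(x+y)^{2}/2\varepsilon}$ by a different route: it writes $1-\mathrm{Erf}$ as a Gaussian integral, expands the square in the exponent, and uses $(z+x+y)^{2}\geq z^{2}+(x+y)^{2}$ so that the $(2\pi\varepsilon)^{-1/2}$ normalisation is absorbed by $\int_{0}^{\infty}e^{-z^{2}/2\varepsilon}dz$. Your proof survives this weakening — with the extra $\varepsilon^{-1/2}$ you land exactly in the setting of Lemma \ref{boundaryEstimate1}, and indeed your stated order $O(\varepsilon^{\eta(1+\gamma)-1/2})$ already carries that factor — but the bound as you state it, with $C_{\kappa}$ independent of $\varepsilon$, does not follow from the tail estimate you invoke.

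Second, the paper's proof is substantially simpler than yours and uses none of the machinery of Lemma \ref{boundaryEstimate1}. Because $g_{\varepsilon}^{E,\kappa}(x,y)\leq\kappa e^{-(x+y)^{2}/2\varepsilon}\leq\kappa e^{-x^{2}/2\varepsilon}$ uniformly in $y$ (no $\varepsilon^{-1/2}$ normalisation, unlike $p_{\varepsilon}$), the inner integral is bounded by $\kappa e^{-x^{2}/2\varepsilon}\,\nu_{t}(0,\infty)\leq\kappa e^{-x^{2}/2\varepsilon}$; squaring and integrating in $x$ gives $\tfrac{1}{2}\kappa^{2}\sqrt{\pi}\,\varepsilon^{1/2}$, hence the whole expression is $O(\varepsilon^{1/2})$ with no appeal to the boundary-decay property of the class $\Lambda$ or any splitting at $y=\varepsilon^{\eta}$. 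Your route reaches the same conclusion but imports the delicate part of the argument (the $\Lambda$-regularity) where only the trivial mass bound $\nu_{t}(0,\infty)\leq 1$ is needed.
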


\begin{proof}
We start by looking at the inner integral. Using Lemma \ref{ElasticCorrectionTermEstimate} we get
\begin{align*}
    \int_0^{\infty} g_{\varepsilon}^{E,\kappa}(x,y) \nu_t(dy)  
    \leq  \kappa e^{-\frac{x^2}{2 \varepsilon}} \nu_t(0,\infty) 
    &\leq  \kappa e^{-\frac{x^2}{2 \varepsilon}}.
\end{align*}
Now, considering the integral in $x$ we have
\begin{align*}
    \int_0^{\infty} \left(\int_0^{\infty} g_{\varepsilon}^{E,\kappa}(x,y) \nu_t(dy)  \right)^2dx & \leq \kappa^2 \int_0^{\infty} e^{-\frac{x^2}{\varepsilon}}dx = \frac{1}{2} \kappa^2 \sqrt{\pi} \varepsilon^{1/2}.
\end{align*}
We then obtain 
\begin{equation}
    \mathbb{E}  \left[\int_0^T \int_0^{\infty} \left(\int_0^{\infty} g_{\varepsilon}^{E,\kappa}(x,y) \nu_t(dy) \right)^2 dx dt  \right]= O(\varepsilon^{1/2})
\end{equation}
and the result follows.
\end{proof}

\subsection{Weak Boundary Condition and Elastic Boundary Terms}\label{subsec:weak_boundary}

As discussed in Section \ref{sect:elastic_intro}, we need a particular weak formulation of the elastic boundary condition. This plays a critical role in dealing with the boundary terms appearing in \eqref{BoundaryTermIBP} in the final uniqueness proof, presented in the next subsection. With $\phi^{E,\kappa}_\varepsilon$ as in \eqref{elasticTestFunc}, we note that this gives a smooth function approximating 1 on $[0,\infty)$ and satisfying
\begin{align}\label{eq:phi_elastic_boundary}
\kappa \phi^{E,\kappa}_{\varepsilon}(0) = \partial_x \phi^{E,\kappa}_{\varepsilon}(0),
\end{align}
which it inherits from the elastic kernel $G^{E,\kappa}_{\varepsilon}$. We then obtain the following.
\begin{proposition}[Weak boundary condition]\label{prop:aprrox_BC} Let $(\nu,W^0)$ be a solution to \eqref{MeasureSPDE} with $\nu$ in the class $\Lambda$ and let  $\phi^{E,\kappa}_{\varepsilon}$ be as in (\ref{elasticTestFunc}). Then \eqref{eq:weak_BC} holds for all $\varepsilon>0$.

\end{proposition}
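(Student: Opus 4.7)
The plan is to test the SPDE \eqref{MeasureSPDE} directly against $\phi^{E,\kappa}_\varepsilon$ and then read off \eqref{eq:weak_BC} by identifying the first and second $y$-derivatives of $\phi^{E,\kappa}_\varepsilon$ in terms of the elastic kernel $G^{E,\kappa}_\varepsilon$ and its correction $\bar g^{E,\kappa}_\varepsilon$. Because $\phi^{E,\kappa}_\varepsilon \to 1$ as $y \to \infty$ it is not a Schwartz function, so a cutoff approximation by genuine test functions in $\mathcal{C}^{E,\kappa}_0(\R)$ is required before one is allowed to plug it into the SPDE.

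First I would derive three pointwise identities for $\phi^{E,\kappa}_\varepsilon(y) = \int_0^\infty G^{E,\kappa}_\varepsilon(y,z)\,dz$. The elastic relation $\partial_y \phi^{E,\kappa}_\varepsilon(0) = \kappa \phi^{E,\kappa}_\varepsilon(0)$ is inherited directly from $G^{E,\kappa}_\varepsilon$, giving \eqref{eq:phi_elastic_boundary}. Integrating the forward heat equation $\partial_\varepsilon G^{E,\kappa}_\varepsilon = \tfrac12 \partial_{zz} G^{E,\kappa}_\varepsilon$ against $dz$ on $(0,\infty)$ and using the elastic boundary condition in $z$ at $z=0$ gives $\partial_\varepsilon \phi^{E,\kappa}_\varepsilon(y) = -\tfrac{\kappa}{2} G^{E,\kappa}_\varepsilon(y,0)$, and combining with the backward heat equation $\partial_{yy}\phi^{E,\kappa}_\varepsilon = 2\partial_\varepsilon\phi^{E,\kappa}_\varepsilon$ together with the symmetry of $G^{E,\kappa}_\varepsilon$ produces the key identity $(\phi^{E,\kappa}_\varepsilon)''(y) = -\kappa\, G^{E,\kappa}_\varepsilon(0, y)$. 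For the first derivative, I would substitute the decomposition of $G^{E,\kappa}_\varepsilon$ into Gaussian terms $p_\varepsilon(y\pm z)$ plus the correction $g^{E,\kappa}_\varepsilon$ from \eqref{ElasticKernel}: the Gaussian part integrates in $z$ to the reflecting survival probability, which is identically $1$, hence contributes nothing to the $y$-derivative, leaving $(\phi^{E,\kappa}_\varepsilon)'(y) = -\int_0^\infty \partial_y g^{E,\kappa}_\varepsilon(y,z)\,dz$. Converting $\partial_y$ into $\partial_z$ on the explicit convolution representation of $g^{E,\kappa}_\varepsilon$ and integrating by parts in $z$ then collapses this to $g^{E,\kappa}_\varepsilon(0,y) = \bar g^{E,\kappa}_\varepsilon(y)$.

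To actually feed $\phi^{E,\kappa}_\varepsilon$ into \eqref{MeasureSPDE} I would take a smooth even cutoff $\chi:\R\to[0,1]$ with $\chi\equiv 1$ on $[-1,1]$ and supported in $[-2,2]$, pick any smooth extension $\tilde\phi^{E,\kappa}_\varepsilon$ of $\phi^{E,\kappa}_\varepsilon$ to all of $\R$, and set $\phi_n := \tilde\phi^{E,\kappa}_\varepsilon\,\chi(\cdot/n)$. Since $\chi(y/n) \equiv 1$ and $\chi'(y/n) \equiv 0$ in a neighbourhood of the origin for every $n$, the elastic relation is inherited, so $\phi_n \in \mathcal{C}^{E,\kappa}_0(\R)$. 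The functions $\phi_n, \phi_n', \phi_n''$ are uniformly bounded on $[0,\infty)$ (with bounds depending on $\varepsilon$ but not on $n$) and converge pointwise there to $\phi^{E,\kappa}_\varepsilon, (\phi^{E,\kappa}_\varepsilon)', (\phi^{E,\kappa}_\varepsilon)''$; combined with the subprobability bound on $\nu_t$ and the fact that $\nu_t$ is supported on $[0,\infty)$ (so the extension to $y<0$ is immaterial), dominated convergence handles the deterministic terms in \eqref{MeasureSPDE} while It\^o's isometry handles the stochastic integral. Substituting the derivative identities from the previous paragraph into the limiting equation yields exactly \eqref{eq:weak_BC}. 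The hardest step is the algebraic identification $(\phi^{E,\kappa}_\varepsilon)'(y) = \bar g^{E,\kappa}_\varepsilon(y)$, which is specific to the form of the correction term in \eqref{ElasticKernel} and requires careful sign bookkeeping in the integrations by parts.
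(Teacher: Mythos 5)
Your proposal is correct, and it reaches \eqref{eq:weak_BC} by a genuinely different route from the paper. The paper does not compute $(\phi^{E,\kappa}_\varepsilon)'$ and $(\phi^{E,\kappa}_\varepsilon)''$ pointwise; instead it writes $\langle \nu_t,\phi^{E,\kappa}_\varepsilon\rangle = \int_0^\infty \langle\nu_t, G^{E,\kappa}_\varepsilon(x,\cdot)\rangle\,dx$, applies the SPDE to the test function $y\mapsto G^{E,\kappa}_\varepsilon(x,y)$ for each fixed $x$, uses Lemma \ref{derivativeSwitch} to trade $\partial_y$ for $\partial_x$ (at the cost of the $p_\varepsilon(x+\cdot)$ and $g^{E,\kappa}_\varepsilon$ correction terms), and then integrates the resulting exact $x$-derivatives over $(0,\infty)$ so that everything collapses to boundary evaluations at $x=0$; the elastic boundary condition of the kernel in the $x$-variable then produces the $-\kappa\langle\nu_s,\tfrac{\sigma_s^2}{2}G^{E,\kappa}_\varepsilon(0,\cdot)\rangle$ term. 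Your two pointwise identities, $(\phi^{E,\kappa}_\varepsilon)''(y)=-\kappa G^{E,\kappa}_\varepsilon(0,y)$ and $(\phi^{E,\kappa}_\varepsilon)'(y)=\bar g^{E,\kappa}_\varepsilon(y)$, are exactly the ``already integrated'' form of what the paper obtains term by term, and both check out: the reflecting part of the kernel integrates to $1$ in $z$, the correction $g^{E,\kappa}_\varepsilon$ depends only on $y+z$ so its $z$-integral has derivative $g^{E,\kappa}_\varepsilon(0,y)$, and the forward/backward heat equations plus the Robin condition at $z=0$ give the second-derivative identity. What your route buys is a shorter, self-contained argument in which \eqref{eq:weak_BC} drops out of the SPDE by direct substitution; what the paper's route buys is reuse of machinery (the smoothed dynamics and the derivative-switching lemma) that is needed anyway in the uniqueness proof of Section \ref{Sec:LinUniq}. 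You are also more careful than the paper on one point: $\phi^{E,\kappa}_\varepsilon\to 1$ at infinity, so it is not in $\mathcal{S}$ and hence not literally in $\mathcal{C}^{E,\kappa}_0(\mathbb{R})$ as the paper asserts; your cutoff approximation (which preserves the elastic condition at the origin because the cutoff is flat there, and passes to the limit by dominated convergence and the It\^o isometry since $\nu_t$ is a sub-probability measure) closes that gap cleanly.
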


\begin{proof}
By \eqref{eq:phi_elastic_boundary}, we see that the convolution $\phi^{E,\kappa}_\varepsilon(x)=\int_0^{\infty}  G^{E,\kappa}_{\varepsilon}(x,y)dy$ results in a valid test function in $\mathcal{C}^{E,\kappa}_0(\mathbb{R})$. Using also Fubini's theorem to change the order of integration between $d\nu$ and $dx$, the assumption that $\nu$ is a solution to \eqref{MeasureSPDE} therefore gives
	\begin{align*}
	&\langle \nu_t, \phi^{E,\kappa}_{\varepsilon} \rangle -\langle \nu_0, \phi^{E,\kappa}_{\varepsilon} \rangle = \int_0^{\infty} \langle \nu_t, G^{E,\kappa}_{\varepsilon}(x,\cdot)\rangle- \langle \nu_0, G^{E,\kappa}_{\varepsilon}(x,\cdot)  \rangle dx \\
	 &\;= \int_0^{\infty} \Big( \int_0^t  \langle \nu_s,\mu_s \partial_y G_{\varepsilon}^{E,\kappa}(x,\cdot) \rangle ds  \\
	&\qquad\qquad+ \int_0^t \langle \nu_s,\frac{\sigma_s^2}{2} \partial_{yy}G_{\varepsilon}^{E,\kappa}(x,\cdot)\rangle ds + \int_0^t \rho_s \langle \nu_s,\sigma_s \partial_y G_{\varepsilon}^{E,\kappa}(x,\cdot)\rangle dW_s^0 \Big)dx.
	\end{align*}
	Next, we employ Lemma \ref{derivativeSwitch} to switch the derivatives from $y$ to $x$. This yields
	\begin{align*}
	\langle \nu_t,& \phi^{E,\kappa}_{\varepsilon} \rangle -\langle \nu_0, \phi^{E,\kappa}_{\varepsilon} \rangle\\
	= &\int_0^{\infty} \Big(\int_0^t- \partial_x \langle \nu_s,\mu_s G_{\varepsilon}^E(x,\cdot) \rangle ds + \int_0^t  \partial_{xx} \langle \nu_s,\frac{\sigma_s^2}{2} G_{\varepsilon}^E(x,\cdot) \rangle ds \\
	&- \int_0^t \rho_s  \partial_x \langle \nu_s,\sigma_s G_{\varepsilon}^E(x,\cdot) \rangle dW_s^0 \Big)dx \\
	&+ \int_0^{\infty}\left(2 \int_0^t  \partial_x \langle \nu_s,\mu_s p_{\varepsilon}(x+\cdot) \rangle ds + 2 \int_0^t \rho_s  \partial_x \langle \nu_s,\sigma_s p_{\varepsilon}(x+\cdot)\rangle dW_s^0 \right)dx \\
	&- \int_0^{\infty}\left(2 \int_0^t  \partial_x \langle \nu_s,\mu_s g_{\varepsilon}^{E,\kappa}(x,\cdot)\rangle ds - 2 \int_0^t \rho_s  \partial_x \langle \nu_s,\sigma_s g_{\varepsilon}^{E,\kappa}(x,\cdot) \rangle dW_s^0 \right)dx.
	\end{align*}
	with $g_{\varepsilon}^{E,\kappa}$ as defined in \eqref{ElasticKernel} and $p_{\varepsilon}$ the Gaussian heat kernel given in (\ref{GaussianHeatKernel}).
	We can now interchange the order of integration again and compute the integrals in $x$ to obtain
	\begin{align*}
	\langle \nu_t, \phi^{E,\kappa}_{\varepsilon} \rangle -\langle \nu_0, \phi^{E,\kappa}_{\varepsilon} \rangle  = &\int_0^t  \langle \nu_s,\mu_s G_{\varepsilon}^{E,\kappa}(0,\cdot)\rangle ds - \int_0^t  \partial_x \langle \nu_s,\frac{\sigma^2}{2} G_{\varepsilon}^{E,\kappa}(0,\cdot)\rangle ds \\
	&+ \int_0^t \rho_s  \langle \nu_s,\sigma_s G_{\varepsilon}^{E,\kappa}(0,\cdot)\rangle dW_s^0 \\
	&- 2 \int_0^t \langle \nu_s,\mu_sp_{\varepsilon}(\cdot)\rangle ds - 2 \int_0^t \rho_s \langle \nu_s,\sigma_s p_{\varepsilon}(\cdot)\rangle dW_s^0 \\
	&+ 2 \int_0^t \langle \nu_s,\mu_s g_{\varepsilon}^{E,\kappa}(0,\cdot)\rangle ds + 2\int_0^t  \rho_s \langle \nu_s,\sigma_sg_{\varepsilon}^{E,\kappa}(0,\cdot)\rangle dW_s^0. 
	\end{align*}
	Using the specific form of the elastic heat kernel we have that
	\begin{align*}
	\int_0^t \langle \nu_s,\mu_s G_{\varepsilon}^{E,\kappa}(0,\cdot)\rangle ds =& 2\int_0^t \langle \nu_s,\mu_s p_{\varepsilon}(\cdot)\rangle ds - \int_0^t \langle \nu_s,g_{\varepsilon}^{E,\kappa}(0,\cdot)\rangle ds \\
	\int_0^t \rho_s \langle \nu_s,\sigma_s G_{\varepsilon}^{E,\kappa}(0,\cdot)\rangle dW_s^0 = &2 \int_0^t \rho_s \langle \nu_s,\sigma_s p_{\varepsilon}(\cdot)\rangle dW_s^0- \int_0^t \rho_s \langle \nu_s,\sigma_s g_{\varepsilon}^{E,\kappa}(0,\cdot)\rangle dW_s^0.
	\end{align*}
	By using these relations the equation above simplifies to
	\begin{align*}
	\langle \nu_t, \phi^{E,\kappa}_{\varepsilon} \rangle -\langle \nu_0, \phi^{E,\kappa}_{\varepsilon} \rangle  = &-  \int_0^t \partial_x \langle \nu_s,\frac{\sigma_s^2}{2}G_{\varepsilon}^{E,\kappa}(0,\cdot)\rangle ds \\
	&+  \int_0^t \langle \nu_s,\mu_s g_{\varepsilon}^{E,\kappa}(0,\cdot)\rangle ds + \int_0^t  \rho \langle \nu_s,\sigma_s g_{\varepsilon}^{E,\kappa}(0,\cdot)\rangle dW_s^0.
	\end{align*}
	Finally, we can then use the boundary condition of the elastic heat kernel to obtain the relaxed boundary condition \eqref{eq:weak_BC}.
\end{proof}

\begin{lemma} [Elastic Boundary Term] \label{IBPBoundary}
Let $\nu$ and $\tilde{\nu}$ be two solutions to the SPDE \eqref{MeasureSPDE} in the class $\Lambda$ and denote their difference by $\Delta$, i.e. $\Delta \coloneqq \nu - \tilde{\nu}$. Then we have
	\begin{align*}
	\mathbb{E} \Bigg[ \int_0^t - \partial_x^{-1}T_{\varepsilon}^{E,\kappa} \Delta_s(0) \sigma_s^2(0) T_{\varepsilon}^{E,\kappa} \Delta_s(0)  &- \partial_x^{-1} T_{\varepsilon}^{E,\kappa} \Delta_s(0) \left(\mathcal{E}_{s,\varepsilon}^{\sigma^2}- \tilde{\mathcal{E}}_{s,\varepsilon}^{\sigma^2} \right)(0)ds \Bigg] \\
	&= - \frac{1}{\kappa} \mathbb{E}\left[ (\partial_x^{-1}T_{\varepsilon}^{E,\kappa} \Delta_t(0))^2 \right] + o(1).
	\end{align*}
with $\mathcal{E}^{\sigma^2}_{s,\varepsilon}, \tilde{\mathcal{E}}^{\sigma^2}_{s,\varepsilon}$ error terms as defined in \eqref{errorTermDefn} for measures $\nu_s$ and $\tilde{\nu_s}$, respectively.
\end{lemma}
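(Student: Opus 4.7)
The plan is to apply It\^o's formula to the square of $A_t := \langle \Delta_t, \phi^{E,\kappa}_{\varepsilon} \rangle$, where $\Delta_t := \nu_t - \tilde\nu_t$, and then exploit the weak boundary identity \eqref{eq:weak_BC}. Fubini together with the convention for the antiderivative gives $\partial_x^{-1}T_\varepsilon^{E,\kappa}\Delta_s(0) = -\langle \Delta_s, \phi^{E,\kappa}_\varepsilon\rangle = -A_s$, while the definition of the error terms yields the decomposition $\langle \Delta_s, \sigma_s^2 G_\varepsilon^{E,\kappa}(0,\cdot)\rangle = \sigma_s^2(0)T_\varepsilon^{E,\kappa}\Delta_s(0) + (\mathcal{E}^{\sigma^2}_{s,\varepsilon} - \tilde{\mathcal{E}}^{\sigma^2}_{s,\varepsilon})(0)$. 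The claim is therefore equivalent to showing that
\begin{align*}
\mathbb{E}\Bigl[\int_0^t A_s \langle \Delta_s, \sigma_s^2 G_\varepsilon^{E,\kappa}(0,\cdot)\rangle\, ds\Bigr] = -\tfrac{1}{\kappa}\mathbb{E}\bigl[A_t^2\bigr] + o(1).
\end{align*}

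To derive this, I would subtract the boundary identity \eqref{eq:weak_BC} for $\tilde\nu$ from that for $\nu$, which produces the linear SDE
\begin{align*}
dA_t = -\kappa\langle\Delta_t, \tfrac{\sigma_t^2}{2}G_\varepsilon^{E,\kappa}(0,\cdot)\rangle\, dt + \langle\Delta_t, \mu_t \bar g_\varepsilon^{E,\kappa}\rangle\, dt + \langle\Delta_t, \rho_t\sigma_t\bar g_\varepsilon^{E,\kappa}\rangle\, dW^0_t,
\end{align*}
with $A_0 = 0$ since $\nu$ and $\tilde\nu$ share the initial datum $\nu_0$. Applying It\^o to $A_t^2$ and taking expectations (the stochastic integral is a true martingale, as all integrands are uniformly bounded) yields
\begin{align*}
\mathbb{E}[A_t^2] &= -\kappa\mathbb{E}\Bigl[\int_0^t A_s \langle\Delta_s, \sigma_s^2 G_\varepsilon^{E,\kappa}(0,\cdot)\rangle\, ds\Bigr] \\
&\quad + 2\mathbb{E}\Bigl[\int_0^t A_s \langle\Delta_s, \mu_s\bar g_\varepsilon^{E,\kappa}\rangle\, ds\Bigr] + \mathbb{E}\Bigl[\int_0^t \langle\Delta_s, \rho_s\sigma_s\bar g_\varepsilon^{E,\kappa}\rangle^2\, ds\Bigr].
\end{align*}
Dividing through by $-\kappa$ and rearranging delivers the reformulated claim, provided the last two expectations above are $o(1)$ as $\varepsilon\downarrow 0$.

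The heart of the argument is thus the vanishing of these two correction contributions. The key observation is that the explicit form of the elastic correction (cf.\ \eqref{ElasticKernel}) yields a uniform-in-$\varepsilon$ bound $\bar g_\varepsilon^{E,\kappa}(y) \le C_\kappa$ together with super-polynomial decay on $[\varepsilon^\eta,\infty)$ for any $\eta \in (0,1/2)$. Splitting the pairings against $\Delta_s$ at $y = \varepsilon^\eta$, the inner part is controlled by $C\bigl[\nu_s(0,\varepsilon^\eta) + \tilde\nu_s(0,\varepsilon^\eta)\bigr]$, while the outer part is super-polynomially small. Squaring, integrating in $s$, and taking expectations, the boundary decay property \ref{assptBound} of the class $\Lambda$ produces $O(\varepsilon^{\eta(1+\gamma)})$, which is $o(1)$ for any admissible $\gamma > 0$. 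This handles the quadratic variation contribution directly, and the drift cross term is controlled by an application of Cauchy--Schwarz in time combined with the uniform bound $|A_s| \le \langle\nu_s + \tilde\nu_s, \phi_\varepsilon^{E,\kappa}\rangle \le 2$ (since $\phi_\varepsilon^{E,\kappa} \le 1$). The principal obstacle is really just the careful extraction of these concentration properties of $\bar g_\varepsilon^{E,\kappa}$ from its explicit representation, after which the class $\Lambda$ boundary decay delivers the required $o(1)$ estimates.
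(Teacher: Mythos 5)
Your proposal is correct and follows essentially the same route as the paper: both rest on the weak boundary identity of Proposition \ref{prop:aprrox_BC}, apply It\^o's formula to the square of $\partial_x^{-1}T_\varepsilon^{E,\kappa}\Delta_\cdot(0)=-\langle\Delta_\cdot,\phi_\varepsilon^{E,\kappa}\rangle$, use the martingale property of the stochastic integral, and kill the two correction terms (your drift cross term and quadratic variation term are exactly the paper's $J_2$ and $J_1$) via the bound $g_\varepsilon^{E,\kappa}(0,y)\leq\kappa e^{-y^2/2\varepsilon}$ of Lemma \ref{ElasticCorrectionTermEstimate} together with the boundary-decay property of the class $\Lambda$. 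The only cosmetic difference is that you apply It\^o directly to $A_t^2$, whereas the paper first rewrites the left-hand side as a Stieltjes integral against the semimartingale \eqref{SemiMartEq} before identifying the same square and quadratic variation.
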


\begin{proof}
	Combining Proposition \ref{prop:aprrox_BC} with the definition of the operator $T_{\varepsilon}^{E,\kappa}$, the anti-derivative and the error terms defined in (\ref{errorTermDefn}) we obtain
	\begin{align} \label{SemiMartEq}
	\begin{split}
	&\int_0^s \sigma_u^2(0) T_{\varepsilon}^{E,\kappa} \Delta_u(0)du = \frac{2}{\kappa} \partial_x^{-1} T_{\varepsilon}^{E,\kappa} \Delta_s(0) - \int_0^s (\mathcal{E}_{u,\varepsilon}^{\sigma^2}- \mathcal{E}_{u,\varepsilon}^{\Tilde{\sigma}^2})(0)du \\
	&\qquad+ \frac{2}{\kappa}  \int_0^s \langle \Delta_u,\mu_u g_{\varepsilon}^{E,\kappa}(0,\cdot)\rangle du+ \frac{2}{\kappa}  \int_0^s \rho_u \langle \Delta_u,\sigma_u g_{\varepsilon}^{E,\kappa}(0,\cdot) \rangle dW_u^0.
	\end{split}
	\end{align}
	With this relation we obtain
	\begin{align*}
	-\mathbb{E} \Bigg[\int_0^t \sigma_s(0)^2 \partial_x^{-1}T_{\varepsilon}^{E,\kappa} &\Delta_s(0) T_{\varepsilon}^{E,\kappa} \Delta_s(0) ds \Bigg] \\
	= &- \mathbb{E} \left[\int_0^t \partial_x^{-1} T_{\varepsilon}^{E,\kappa} \Delta_s(0) d(\int_0^{s}\sigma_u^2(0) T_{\varepsilon}^{E,\kappa} \Delta_u(0)du)  \right] \\
	= &- \frac{1}{\kappa} \mathbb{E} \left[2 \int_0^t \partial_x^{-1}T_{\varepsilon}^{E,\kappa} \Delta_s(0) d(\partial_x^{-1}T_{\varepsilon}^{E,\kappa} \Delta_s(0)) \right] \\
	&+  \mathbb{E} \left[\int_0^t\partial_x^{-1}T_{\varepsilon}^{E,\kappa} \Delta_s(0)(\mathcal{E}_{s,\varepsilon}^{\sigma^2}- \tilde{\mathcal{E}}_{s,\varepsilon}^{\sigma^2})(0)ds \right] \\
	&- \frac{2}{\kappa} \mathbb{E} \left[\int_0^t \partial_x^{-1}T_{\varepsilon}^{E,\kappa}\Delta_s(0) \langle \Delta_s,\mu_s g_{\varepsilon}^{E,\kappa}(0,\cdot)\rangle ds \right] \\
	&- \frac{2}{\kappa} \mathbb{E} \left[\int_0^t \partial_x^{-1}T_{\varepsilon}^{E,\kappa} \Delta_s(0) \rho_s \langle \Delta_s,\sigma_s g_{\varepsilon}^{E,\kappa}(0,\cdot)\rangle dW_s^0 \right].
	\end{align*}
	
	By combining the boundedness of $\sigma$ with Lemma \ref{ElasticCorrectionTermEstimate} we get that the stochastic integral in the last line is a martingale and thus the term vanishes. For the first term, using the fact that $\partial_x^{-1}T_{\varepsilon}^{E,\kappa}\Delta_{\cdot}(0)$ is a continuous semimartingale, we have
	\begin{equation*}
	(\partial_x^{-1}T_{\varepsilon}^{E,\kappa}\Delta_t(0))^2 = 2 \int_0^t \partial_x^{-1}T_{\varepsilon}^{E,\kappa} \Delta_s(0)d(\partial_x^{-1}T_{\varepsilon}^{E,\kappa} \Delta_s(0)) + [\partial_x^{-1}T_{\varepsilon}^{E,\kappa} \Delta_{\cdot}(0)]_t.
	\end{equation*}
	Using equation (\ref{SemiMartEq}) we obtain that
	\begin{equation*}
	[\partial_x^{-1}T_{\varepsilon}^{E,\kappa} \Delta_{\cdot}(0)]_t =  \int_0^t \rho_s^2  (\langle \Delta_s,\sigma_s g_{\varepsilon}^{E,\kappa}(0,\cdot)\rangle )^2ds.
	\end{equation*}
	This yields
	\begin{align}\label{J1J2Equation}
	&- \mathbb{E} \Bigg[\int_0^t \sigma_s(0)^2 \partial_x^{-1}T_{\varepsilon}^{E,\kappa} \Delta_s(0) T_{\varepsilon}^{E,\kappa} \Delta_s(0) ds \Bigg]\\
	&= - \frac{1}{\kappa} \mathbb{E}\left[ (\partial_x^{-1}T_{\varepsilon}^{E,\kappa} \Delta_t(0))^2 \right] 
	+  \mathbb{E} \left[\int_0^t\partial_x^{-1}T_{\varepsilon}^{E,\kappa} \Delta_s(0)(\mathcal{E}_{s,\varepsilon}^{\sigma^2}- \tilde{\mathcal{E}}_{s,\varepsilon}^{\sigma^2})(0)ds \right] 
	+ J_1 + J_2
	\end{align}
	with the two terms $J_1$ and $J_2$ defined as
	\begin{align*}
	J_1 &\coloneqq  \frac{1}{\kappa} \mathbb{E} \left[\int_0^t \rho_s^2 (\langle \Delta_s,\sigma_s g_{\varepsilon}^{E,\kappa}(0,\cdot)\rangle )^2 ds  \right] \\
	J_2 &\coloneqq  - \frac{2}{\kappa} \mathbb{E} \left[\int_0^t \partial_x^{-1}T_{\varepsilon}^{E,\kappa}\Delta_s(0) \langle \Delta_s,\mu_s g_{\varepsilon}^{E,\kappa}(0,\cdot)\rangle ds \right].
	\end{align*}
	
	From Lemma \ref{ElasticCorrectionTermEstimate} we have the estimate
	\begin{equation*}
	g_{\varepsilon}^{E,\kappa}(0,y) \leq \kappa e^{-\frac{y^2}{2 \varepsilon}}. 
	\end{equation*}
	For a process $\xi$ in the class $\Lambda$ we get
	\begin{align}\label{ElasticCorrectionLambdaEst}
	\begin{split}
	\int_0^\infty g_{\varepsilon}^{E,\kappa}(0,y) \xi_s(dy) &\leq \kappa \int_0^{\varepsilon^{1/4}} e^{-\frac{y^2}{2 \varepsilon}} \xi_s(dy) + \kappa \int_{\varepsilon^{1/4}}^\infty e^{-\frac{y^2}{2 \varepsilon}} \xi_s(dy) \\
	&\leq \kappa \xi_s(0,\varepsilon^{1/4}) + \kappa e^{-\frac{\varepsilon^{-1/2}}{2}}
	\end{split}
	\end{align}
	The coefficients $\mu, \sigma, \rho$ are bounded by assumption and
	\begin{equation*}
	\lvert \partial_x^{-1}T_{\varepsilon}^{E,\kappa}\Delta_s(0) \rvert \leq \lvert \Delta_s(0,\infty) \rvert \leq 2.   
	\end{equation*}
	We can combine this with \eqref{ElasticCorrectionLambdaEst} to estimate
	\begin{align} \label{J1Convergence}
	J_1 \leq C \left( \int_0^t  \mathbb{E} \left[ \nu_s(0,\varepsilon^{1/4})^2 \right] ds + \int_0^t  \mathbb{E}  \left[\tilde{\nu}_s(0,\varepsilon^{1/4})^2 \right] ds  \right) + O(e^{-\varepsilon^{-1/2}}) \to 0,
	\end{align}
	as $\varepsilon \to 0$, because $\nu$ and $\tilde{\nu}$ are in the class $\Lambda$. For $J_2$ we similarly get
	\begin{align} \label{J2Convergence}
	\begin{split}
	\lvert J_2 \rvert  &\leq C \mathbb{E} \left[ \int_0^t \langle \nu_s, g_\varepsilon^{E,\kappa}(0,\cdot) + \langle \tilde{\nu}_s, g_\varepsilon^{E,\kappa}(0,\cdot) \rangle \rangle ds \right] \\
	&\leq \int_0^t \mathbb{E} \left[\nu_s(0,\varepsilon^{1/4})\right] + \mathbb{E}\left[\tilde{\nu}_s(0,\varepsilon^{1/4})\right]ds + O(e^{-\varepsilon^{-1/2}/2}) \to 0, \quad \text{as $\varepsilon \to 0$},
	\end{split}
	\end{align}
	again, by the properties of the class $\Lambda$. Combining \eqref{J1J2Equation}, \eqref{J1Convergence} and \eqref{J2Convergence} we obtain
	
	\begin{align*}
	&- \mathbb{E} \Bigg[\int_0^t \sigma_s(0)^2 \partial_x^{-1}T_{\varepsilon}^{E,\kappa} \Delta_s(0) T_{\varepsilon}^{E,\kappa} \Delta_s(0) ds \Bigg]\\
	&= - \frac{1}{\kappa} \mathbb{E}\left[ (\partial_x^{-1}T_{\varepsilon}^{E,\kappa} \Delta_t(0))^2 \right] 
	+  \mathbb{E} \left[\int_0^t\partial_x^{-1}T_{\varepsilon}^{E,\kappa} \Delta_s(0)(\mathcal{E}_{s,\varepsilon}^{\sigma^2}- \tilde{\mathcal{E}}_{s,\varepsilon}^{\sigma^2})(0)ds \right]
	+ o(1)
	\end{align*}
	as $\varepsilon \to 0$, and so the result follows.
\end{proof}

\subsection{Proof of uniqueness for solutions to the SPDE}\label{subsec:uniqueness_proof}

We can now give the proof of Theorem \ref{uniqueness}. Lemma \ref{IBPBoundary} from the previous section plays an important role in our arguments. It is used in \eqref{BoundaryTermIBP} to deal with the boundary terms that arise when integrating by parts.

\begin{proof}[Proof of Theorem \ref{uniqueness}]
Let $(\nu,W^0)$ be a solution to the SPDE (\ref{MeasureSPDE}) in the class $\Lambda$. 
Take the elastic heat kernel function $y \mapsto G_{\varepsilon}^{E,\kappa}(x,y)$ as a test function in the SPDE and apply the switching of derivatives from $y$ to $x$ using Lemma \ref{derivativeSwitch} to obtain the following dynamics for the smoothed measure $T^{E,\kappa}_{\varepsilon} \nu_t$ 
\begin{align*}
    dT_{\varepsilon}^{E,\kappa} \nu_t(x) = &- \partial_x \langle \nu_t,\mu_t G_{\varepsilon}^{E,\kappa}(x,\cdot) \rangle dt + \frac{1}{2} \partial_{xx} \langle \nu_t,\sigma_t^2 G_{\varepsilon}^{E,\kappa}(x,\cdot) \rangle dt \\
    &-  \rho_t \partial_x  \langle \nu_t,\sigma_t G_{\varepsilon}^{E,\kappa}(x,\cdot) \rangle dW_t^0 \\
    &+ 2 \partial_x \langle \nu_t, \mu_t p_{\varepsilon}(x+\cdot) \rangle dt + 2 \rho_t \partial_x \langle  \nu_t,\sigma_t p_{\varepsilon}(x+ \cdot) \rangle dW_t^0 \\
    &-2 \partial_x \langle \nu_t,\mu_t g_{\varepsilon}^{E,\kappa}(x,\cdot) \rangle dt - 2 \rho_t \partial_x \langle \nu_t,\sigma_t g_{\varepsilon}^{E,\kappa}(x, \cdot) \rangle dW_t^0.
\end{align*}
Next, we integrate to introduce the anti-derivative defined in (\ref{antiDeriv})
\begin{align*}
    d\partial_x^{-1}T_{\varepsilon}^{E,\kappa} \nu_t(x) = &- \langle \nu_t,\mu_t G_{\varepsilon}^{E,\kappa}(x,\cdot) \rangle dt + \frac{1}{2} \partial_{x} \langle \nu_t,\sigma_t^2 G_{\varepsilon}^{E,\kappa}(x,\cdot)\rangle dt \\
    &- \rho_t  \langle \nu_t,\sigma_t G_{\varepsilon}^{E,\kappa}(x,\cdot)\rangle dW_t^0 \\
    &+ 2 \langle \nu_t,\mu_t p_{\varepsilon}(x+\cdot)\rangle dt + 2 \rho_t  \langle \nu_t,\sigma_t p_{\varepsilon}(x+ \cdot)\rangle dW_t^0 \\
    &-2  \langle \nu_t,\mu_t g_{\varepsilon}^{E,\kappa}(x,\cdot) \rangle dt - 2 \rho_t  \langle \nu_t, \sigma_t g_{\varepsilon}^{E,\kappa}(x, \cdot) \rangle dW_t^0.
\end{align*}
We can take the coefficients $\mu, \sigma$ and $\sigma^2$ outside of the integration against the measure $\nu_t$ by introducing error terms $\mathcal{E}_{t,\varepsilon}^h$ defined by
\begin{equation} \label{errorTermDefn}
    \mathcal{E}_{t,\varepsilon}^h \coloneqq \langle \nu_t,h_t(\cdot)G^{E,\kappa}_{\varepsilon}(x,\cdot)\rangle -h_t(x)T_{\varepsilon}^{E,\kappa}\nu_t(x).
\end{equation}
for $h_t(x) = h(t,x)$ representing $\mu,\sigma$ or $\sigma^2$. The equation then becomes
\begin{align*}
    d\partial_x^{-1}T_{\varepsilon}^{E,\kappa} \nu_t  = &-(\mu_t T_{\varepsilon}^{E,\kappa} \nu_t + \mathcal{E}_{t,\varepsilon}^{\mu}) dt + \frac{1}{2} \partial_x(\sigma^2_t T_{\varepsilon}^{E,\kappa}\nu_t+\mathcal{E}_{t,\varepsilon}^{\sigma^2})dt \\
    &- \rho_t (\sigma_t T_{\varepsilon}^{E,\kappa} \nu_t + \mathcal{E}_{t,\varepsilon}^{\sigma})dW_t^0 \\
    &+ 2  \langle \nu_t,\mu_t p_{\varepsilon}(x+\cdot) \rangle dt + 2 \rho_t  \langle \nu_t,\sigma_t p_{\varepsilon}(x+ \cdot)\rangle dW_t^0 \\
    &-2 \langle \nu_t,\mu_t g_{\varepsilon}^{E,\kappa}(x+\cdot) \rangle dt - 2 \rho_t \langle \nu_t, \sigma_t g_{\varepsilon}^{E,\kappa}(x+ \cdot)\rangle dW_t^0.
\end{align*}
To simplify the notation, we denote by $o_{sq}(1)$ any family of functions $\{(f_{t,\varepsilon})_{t \in [0,T]} \}_{\varepsilon>0}$ such that
\begin{equation}
    \mathbb{E} \int_0^T \norm{f_{t,\varepsilon}}^2_{L^2(0,\infty)}dt \to 0, \quad \text{as } \varepsilon \to 0.
\end{equation}
By using the limit behavior from Lemma \ref{boundaryEstimate1} and Lemma \ref{BoundaryEstimate2}, as well as the results for $\mathcal{E}^h_{t,\varepsilon}$ from \cite[Lemma 8.1]{HamblyLedger2017}, which also hold in the elastic case because of the regularity of the class $\Lambda$,  we have 
\begin{align} \label{dynamicsForYoung}
\begin{split}
    d\partial_x^{-1}T_{\varepsilon}^{E,\kappa} \nu_t  = &-\mu_t T_{\varepsilon}^{E,\kappa} \nu_t  dt + \frac{1}{2} \partial_x(\sigma^2_t T_{\varepsilon}^{E,\kappa}\nu_t+\mathcal{E}_{t,\varepsilon}^{\sigma^2})dt \\
    &- \rho_t \sigma_t T_{\varepsilon}^{E,\kappa} \nu_t dW_t^0 
    + o_{sq}(1)dt + o_{sq}(1)dW_t^0.
\end{split}
\end{align}
Let $(\tilde{\nu},W^0)$ be another solution to the linear SPDE with the same Brownian motion $W^0$ and define the difference of the two solutions as $\Delta \coloneqq \nu- \tilde{\nu} $. The dynamics of this are
\begin{align*}
    d\partial_x^{-1}T_{\varepsilon}^{E,\kappa} \Delta_t  = &-\mu_t T_{\varepsilon}^{E,\kappa} \Delta_t  dt + \frac{1}{2} \partial_x(\sigma_t^2 T_{\varepsilon}^{E,\kappa} \Delta_t + \mathcal{E}_{t,\varepsilon}^{\sigma^2}-\tilde{\mathcal{E}} _{t,\varepsilon}^{\sigma^2})dt \\
    &- \rho_t \sigma_t T_{\varepsilon}^{E,\kappa} \Delta_t dW_t^0 
    + o_{sq}(1)dt + o_{sq}(1)dW_t^0.
\end{align*}

Next we use It{\^o}'s formula to find an equation for the square of the anti-derivative
\begin{align*}
    d(\partial_x^{-1} T^{E,\kappa}_{\varepsilon} \Delta_t)^2  = &- 2\mu_t(\partial_x^{-1}T^{E,\kappa}_{\varepsilon} \Delta_t) T_{\varepsilon}^{E,\kappa} \Delta_t dt \\
    &+ (\partial_x^{-1}T^{E,\kappa}_{\varepsilon} \Delta_t) \partial_x (\sigma_t^2 T_{\varepsilon}^{E,\kappa} \Delta_t + \mathcal{E}_{t,\varepsilon}^{\sigma^2}- \tilde{\mathcal{E}}_{t,\varepsilon}^{\sigma^2})dt \\
    &+ \sigma^2_t \rho_t^2 (T^{E,\kappa}_{\varepsilon}\Delta_t)^2dt \\
    &- 2 (\partial_x^{-1}T^{E,\kappa}_{\varepsilon}\Delta_t) \sigma_t \rho_t T^{E,\kappa}_{\varepsilon}\Delta_t dW_t^0 \\
    &+ \rho_t \sigma_t T_{\varepsilon}^{E,\kappa} \Delta_t o_{sq}(1)dt \\
    &+ (\partial_x^{-1}T^{E,\kappa}_{\varepsilon} \Delta_t)o_{sq}(1) dt\\
    &+ (\partial_x^{-1}T^{E,\kappa}_{\varepsilon}\Delta_t)o_{sq}(1) dW_t + o_{sq}(1)^2dt.
\end{align*}
We are interested in estimates for $\mathbb{E} [ \Vert \partial_x^{-1}T_{\varepsilon}^{E,\kappa} \Delta_t\Vert_2^2 ]$ to use the $H^{-1}$-estimate (\ref{H-1Estimate}). We obtain these by integrating over $x>0$ and taking expectation.  We deal with the terms on the right-hand side individually. Combining the subexponential-tails property of $\nu_t$ and $\tilde{\nu}_t$ with \cite[Lemma 8.3 and Lemma 8.7]{HamblyLedger2017}, which are easily adapted to the elastic heat kernel, the stochastic integral terms are martingales and thus vanish when we take expectation. For the first term we use the  generalized Young's inequality with free parameter $\eta>0$ to obtain
\begin{align} \label{DriftYoungIneq}
\begin{split}
    \mathbb{E} \Bigg[-2 &\int_0^t \int_0^{\infty} \mu_s(\partial_x^{-1}T^{E,\kappa}_{\varepsilon} \Delta_s) T_{\varepsilon}^{E,\kappa} \Delta_s dxds  \Bigg] \\ &\leq c_{\eta} \int_0^t \mathbb{E} \left[ \norm{\partial_x^{-1}T_{\varepsilon}^{E,\kappa} \Delta_s}_2^2 \right]ds
    +  \eta \mathbb{E} \left[ \int_0^t \int_0^{\infty} (\mu_s)^2 \lvert T_{\varepsilon}^{E,\kappa} \Delta_s \rvert^2 dxds\right].
\end{split}
\end{align}
We employ integration by parts followed by the generalized Young's inequality for the second term. Lemma \ref{IBPBoundary} from Section \ref{subsec:weak_boundary} allows us to handle the boundary term that appears when integrating by parts. Thus, we have
\begin{align} \label{BoundaryTermIBP}
\begin{split}
    \mathbb{E} &\Bigg[ \int_0^t \int_0^{\infty} (\partial_x^{-1}T^{E,\kappa}_{\varepsilon} \Delta_s) \partial_x (\sigma_s^2 T^{E,\kappa}_{\varepsilon} \Delta_s + \mathcal{E}_{s,\varepsilon}^{\sigma^2}- \tilde{\mathcal{E}}_{s,\varepsilon}^{\sigma^2})dx ds \Bigg]  \\
     & \leq - \frac{1}{\kappa} \mathbb{E}\left[ (\partial_x^{-1}T_{\varepsilon}^{E,\kappa} \Delta_s(0))^2 \right]  - \mathbb{E} \left[ \int_0^t \int_0^{\infty} \sigma_s^2 \lvert T^{E,\kappa}_{\varepsilon} \Delta_s \rvert^2 dx ds \right] \\ &\qquad + \eta \mathbb{E} \left[\int_0^t \int_0^{\infty} \lvert  T_{\varepsilon}^{E,\kappa} \Delta_s \rvert ^2 dxds \right] + o(1).
\end{split}
\end{align}
We leave the third term as it is. For the remaining terms we proceed to use the generalized Young's inequality as in \eqref{DriftYoungIneq}. Putting everything back together we obtain
\begin{align*}
   & \mathbb{E} \Bigg[\norm{\partial_x^{-1}T_{\varepsilon}^{E,\kappa} \Delta_t}^2_2 \Bigg] + \frac{1}{\kappa} \mathbb{E}\left[ (\partial_x^{-1}T_{\varepsilon}^{E,\kappa} \Delta_t(0))^2 \right] 
    \leq c_{\eta} \mathbb{E} \left[ \int_0^t \norm{\partial_x^{-1}T^{E,\kappa}_{\varepsilon} \Delta}_2^2ds \right] \\
    &\qquad - \mathbb{E} \left[\int_0^t \int_0^{\infty} (\sigma_s^2 - \sigma_s^2 \rho_s^2(1+\eta) - \mu_s^2 \eta - \eta ) \lvert T^{E,\kappa}_{\varepsilon} \Delta_s \rvert^2 dx ds \right]  + o(1).
\end{align*}
Since $\rho_s^2 < 1$ we can choose the free parameter $\eta$ small enough such that
\begin{equation}
    \sigma_s^2 - \sigma_s^2 \rho_s^2(1+\eta) - \mu_s^2 \eta - \eta \geq c_0
\end{equation}
for all $x$ and $s$ and for some constant $c_0 >0$. This yields
\begin{align} \label{GronwallSetup}
\begin{split}
   &  \mathbb{E} \left[\norm{\partial_x^{-1}T_{\varepsilon}^{E,\kappa} \Delta}^2_2 \right] + \frac{1}{\kappa} \mathbb{E}\left[ (\partial_x^{-1}T_{\varepsilon}^{E,\kappa} \Delta_t(0))^2 \right] \\
    &\qquad \leq c_{\eta}  \int_0^t\mathbb{E} \left[ \norm{\partial_x^{-1}T^{E,\kappa}_{\varepsilon} \Delta_s}_2^2 \right]ds-c_0 \mathbb{E} \left[\int_0^t \norm{T_\varepsilon^{E,\kappa} \Delta_s}_2^2 ds\right] + o(1).
\end{split}    
\end{align}
We then get the estimate 
\begin{align*}
\begin{split}
    &\mathbb{E} \left[\norm{\partial_x^{-1}T_{\varepsilon}^{E,\kappa} \Delta}^2_2 \right] + \frac{1}{\kappa} \mathbb{E}\left[ (\partial_x^{-1}T_{\varepsilon}^{E,\kappa} \Delta_t(0))^2 \right] \\
    &\qquad \leq c_{\eta}  \int_0^t\mathbb{E} \left[ \norm{\partial_x^{-1}T^{E,\kappa}_{\varepsilon} \Delta_s}_2^2 \right]ds+ c_\eta \int_0^t \frac{1}{\kappa} \mathbb{E}\left[ (\partial_x^{-1}T_{\varepsilon}^{E,\kappa} \Delta_s(0))^2 \right]ds   + o(1).
    \end{split}
\end{align*}
By Gronwall's lemma it follows that
\begin{equation} \label{sumOfSq}
    \lim_{\varepsilon \to 0} \left(\mathbb{E} \left[\norm{\partial_x^{-1}T_{\varepsilon}^{E,\kappa} \Delta_t}^2_2 \right] + \frac{1}{\kappa} \mathbb{E}\left[ (\partial_x^{-1}T_{\varepsilon}^{E,\kappa} \Delta_t(0))^2 \right] \right) = 0.
\end{equation}
Now, recall  the $H^{-1}$-estimate (\ref{H-1Estimate}). We can find another constant $C_1$ such that
\begin{equation}\label{eq:H^-1}
    \norm{\Delta_{t}}_{-1} \leq C_1  \liminf_{\varepsilon \to 0} \left(\norm{\partial_x^{-1}T^{E,\kappa}_{\varepsilon}\Delta_{t}}_{L^2(0,\infty)}+ \frac{1}{\sqrt{\kappa}} \left\lvert \int_0^{\infty}T^{E,\kappa}_{\varepsilon}\Delta_{t}(y)dy \right\rvert \right).
\end{equation}
Using $(a+b)^2 \leq2 ( a^2 + b^2)$ and Fatou's lemma, we obtain from (\ref{sumOfSq}) that
\begin{align*}
    \mathbb{E} \left[\liminf_{\varepsilon \to 0} \left(\norm{\partial_x^{-1}T^E_{\varepsilon}\Delta_{t }}_{L^2(0,\infty)}+ \frac{1}{\sqrt{\kappa}} \left\lvert \int_0^{\infty}T^E_{\varepsilon}\Delta_{t}(y)dy \right\rvert \right)^2 \right] = 0
\end{align*}
and so, as a consequence of \eqref{eq:H^-1}, $\mathbb{E} \left[\norm{\Delta_{t}}_{-1} \right] = 0$. In turn, we can deduce the equality $\nu_t = \Tilde{\nu}_t$ for all $t \in [0,T]$, with probability 1, which concludes the proof.
\end{proof}

\section{Existence of a density and its integrability} \label{DensityRegularitySection}

In this section we establish the existence and regularity results for the densities of the sub-probability measures $\nu_t, t \in [0,T]$, for a solution to the SPDE \eqref{MeasureSPDE}. We use an approach based on energy estimates in $L^2$, similar to the arguments of Section \ref{Sec:LinUniq}, to establish existence of a density of $\nu_t$ restricted $(0,\infty)$. Unfortunately, these arguments do not allow us to rule out the presence of an atom at the boundary. However, as we see in Section 6.1, this can be achieved by a result of \cite{Burdzy2003} for reflected Brownian motion.

 \begin{lemma} \label{IntegratedSmoothedL2}
Let $(\nu,W^0)$ be the unique solution to the SPDE \eqref{MeasureSPDE} with $\nu$ in the class $\Lambda$. Then we have
\begin{equation*}
      \lim_{\varepsilon \to 0} \mathbb{E } \left[\int_0^t  \norm{T_\varepsilon^{E,\kappa} \nu_s}_2^2 ds \right] < \infty\qquad \text{for all} \quad t\in[0,T].
\end{equation*}
\end{lemma}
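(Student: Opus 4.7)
The plan is to run the energy argument from Section \ref{Sec:LinUniq} on the single solution $\nu$ instead of on a difference $\Delta = \nu - \tilde{\nu}$, and to extract from the integration-by-parts step the coercive control of $\mathbb{E}\int_0^t \|T_\varepsilon^{E,\kappa} \nu_s\|_2^2 ds$ that was previously discarded as waste. The derivation of the smoothed dynamics \eqref{dynamicsForYoung} did not use any cancellation between two solutions, so it holds verbatim with $\nu$ in place of $\Delta$.

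First I would apply It\^o's formula to $(\partial_x^{-1} T_\varepsilon^{E,\kappa} \nu_t(x))^2$, integrate over $x>0$, and take expectations. The stochastic integral contributions vanish by the same tail arguments used in the proof of Theorem \ref{uniqueness} (via the analogues of \cite[Lemmas 8.3, 8.7]{HamblyLedger2017}), which only require the exponential tails from class $\Lambda$. Integration by parts on the $\tfrac12 \partial_x(\sigma_s^2 T_\varepsilon^{E,\kappa} \nu_s + \mathcal{E}_{s,\varepsilon}^{\sigma^2})$ term produces the energy term $-\mathbb{E}\int_0^t\!\int_0^\infty \sigma_s^2 (T_\varepsilon^{E,\kappa}\nu_s)^2 dx\, ds$ together with a boundary term at $x=0$.

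The next step is to control that boundary term via the $\nu$-analogue of Lemma \ref{IBPBoundary}. Proposition \ref{prop:aprrox_BC} applied directly to $\nu$ gives a semimartingale relation between $\partial_x^{-1} T_\varepsilon^{E,\kappa}\nu_t(0) = -\langle \nu_t, \phi^{E,\kappa}_\varepsilon\rangle$ and $\sigma_s^2(0) T_\varepsilon^{E,\kappa}\nu_s(0)\, ds$ of the same form as \eqref{SemiMartEq}, and It\^o's formula combined with the error controls on $J_1, J_2$ from \eqref{J1Convergence}--\eqref{J2Convergence} yields
\begin{align*}
-\mathbb{E}\Bigl[\int_0^t \sigma_s^2(0)\, \partial_x^{-1} T_\varepsilon^{E,\kappa} \nu_s(0)\, T_\varepsilon^{E,\kappa} \nu_s(0)\, ds\Bigr] = -\tfrac{1}{\kappa} \mathbb{E}\bigl[(\partial_x^{-1} T_\varepsilon^{E,\kappa} \nu_t(0))^2\bigr] + \tfrac{1}{\kappa}\bigl(\partial_x^{-1} T_\varepsilon^{E,\kappa}\nu_0(0)\bigr)^2 + o(1).
\end{align*}
The only new feature compared with Lemma \ref{IBPBoundary} is the initial boundary contribution, which is harmless since $\phi_\varepsilon^{E,\kappa} \leq 1$ bounds it above by $1/\kappa$.

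Combining the energy and boundary computations with the generalized Young inequality on the drift and error cross terms, and choosing $\eta > 0$ so small that $\sigma^2 - \sigma^2 \rho^2(1+\eta) - \mu^2 \eta - \eta \geq c_0 > 0$ (possible by Assumption \ref{AssumptionSpace} since $\rho_s^2 < 1$), I arrive at the analogue of \eqref{GronwallSetup}:
\begin{align*}
\mathbb{E}\|\partial_x^{-1} T_\varepsilon^{E,\kappa} \nu_t\|_2^2 + c_0\, \mathbb{E}\int_0^t \|T_\varepsilon^{E,\kappa}\nu_s\|_2^2 ds \leq C_0 + c_\eta \int_0^t \mathbb{E}\|\partial_x^{-1} T_\varepsilon^{E,\kappa}\nu_s\|_2^2 ds + o(1),
\end{align*}
where $C_0 := \|\partial_x^{-1} T_\varepsilon^{E,\kappa} \nu_0\|_2^2 + 1/\kappa$ is uniformly bounded in $\varepsilon$ thanks to the sub-exponential tails and $L^2$-density assumption on $\nu_0$ in Assumption \ref{AssumptionInitial}, since $|\partial_x^{-1} T_\varepsilon^{E,\kappa}\nu_0(x)|\leq \nu_0([x/2,\infty)) + O(e^{-cx^2/\varepsilon})$. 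Gronwall applied to $f(t) := \mathbb{E}\|\partial_x^{-1} T_\varepsilon^{E,\kappa} \nu_t\|_2^2$ then yields $f(t) \leq C_0 e^{c_\eta t} + o(1)$ uniformly in $\varepsilon$, and substituting back gives the desired uniform bound on $\mathbb{E}\int_0^t \|T_\varepsilon^{E,\kappa}\nu_s\|_2^2 ds$. The main obstacle is transferring Lemma \ref{IBPBoundary} from $\Delta$ to $\nu$; this is essentially routine because the $g_\varepsilon^{E,\kappa}$ error estimates only relied on boundary decay of class $\Lambda$ and boundedness of the coefficients, both of which hold for $\nu$.
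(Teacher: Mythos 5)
Your proposal is correct and follows essentially the same route as the paper, which simply says to rerun the energy estimates of Theorem \ref{uniqueness} for $\nu$ in place of $\Delta$ and to keep the $c_0$-term on the left-hand side before applying Gronwall. You have additionally (and correctly) spelled out the two new features the paper leaves implicit, namely the nonzero initial contributions $\Vert \partial_x^{-1}T_{\varepsilon}^{E,\kappa}\nu_0\Vert_2^2$ and $(\partial_x^{-1}T_{\varepsilon}^{E,\kappa}\nu_0(0))^2$, and why they are bounded uniformly in $\varepsilon$.
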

\begin{proof}
The proof follows by performing the same estimates as in the proof of Theorem \ref{uniqueness} but for $\nu$ instead of the difference of two solutions. Instead of disregarding the negative term on the right-hand side involving $c_0$ as in \eqref{GronwallSetup}, we move it over to the left-hand side and suitably adjust the application of Gronwall's lemma.
\end{proof}
Using this result we deduce that there is an $L^2$-density for a.e.~$(\omega,t) \in \Omega \times [0,T]$.

\begin{proposition}\label{PropIntegratedDensityRegularity}
Let $(\nu,W^0)$ be the unique solution to the SPDE \eqref{MeasureSPDE} with $\nu$ in the class $\Lambda$. Then for almost every $(\omega,t) \in \Omega \times [0,T]$ the measure $\nu_t(\omega)$ has a density $V_t(\omega)$ on $[0,\infty)$ and we have the integrated estimate
\begin{equation}
\mathbb{E} \left[ \int_0^t \norm{V_s}_2^2 ds\right] < \infty,\qquad \text{for all} \quad t \in [0,T].
\end{equation}
\end{proposition}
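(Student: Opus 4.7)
The plan is to derive the existence of an $L^2$-density for a.e.\ $(\omega,t)$ directly from the uniform bound in Lemma~\ref{IntegratedSmoothedL2}, using Fatou's lemma together with a weak-compactness argument in $L^2(0,\infty)$.

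First, by Fatou's lemma applied in $(\omega,s)$ along any sequence $\varepsilon_n \downarrow 0$, Lemma~\ref{IntegratedSmoothedL2} gives
\begin{equation*}
\mathbb{E}\left[\int_0^t \liminf_{\varepsilon\downarrow 0}\Vert T_\varepsilon^{E,\kappa}\nu_s\Vert_{L^2(0,\infty)}^2\, ds\right] \leq \liminf_{\varepsilon\downarrow 0}\mathbb{E}\left[\int_0^t \Vert T_\varepsilon^{E,\kappa}\nu_s\Vert_{L^2(0,\infty)}^2\, ds\right] < \infty.
\end{equation*}
Setting $A(\omega,s) := \liminf_{\varepsilon\downarrow 0}\Vert T_\varepsilon^{E,\kappa}\nu_s(\omega)\Vert_{L^2(0,\infty)}$, we conclude that $A(\omega,s) < \infty$ for a.e.\ $(\omega,s)\in\Omega\times[0,T]$ and that $\mathbb{E}\bigl[\int_0^t A^2\,ds\bigr] < \infty$.

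Second, fix any such $(\omega,s)$ and pick a subsequence $\varepsilon_n \downarrow 0$ (depending on $(\omega,s)$) along which $\Vert T_{\varepsilon_n}^{E,\kappa}\nu_s\Vert_{L^2}$ stays bounded. By Banach--Alaoglu in the reflexive space $L^2(0,\infty)$, extract a further subsequence (not relabelled) with $T_{\varepsilon_n}^{E,\kappa}\nu_s \rightharpoonup V_s(\omega)$ weakly in $L^2(0,\infty)$ for some $V_s(\omega)\in L^2(0,\infty)$, and lower semicontinuity of the norm gives $\Vert V_s\Vert_{L^2}\leq A(\omega,s)$.

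Third, identify $V_s$ as the density. For any $\phi\in C_c^\infty((0,\infty))$, Fubini gives
\begin{equation*}
\int_0^\infty T_{\varepsilon_n}^{E,\kappa}\nu_s(x)\,\phi(x)\,dx = \int_0^\infty \Bigl(\int_0^\infty G_{\varepsilon_n}^{E,\kappa}(x,y)\phi(x)\,dx\Bigr)\nu_s(dy),
\end{equation*}
and since $\phi$ is smooth and supported away from the boundary, the inner integral converges to $\phi(y)$ uniformly in $y$ as $\varepsilon_n\downarrow 0$, so the right-hand side tends to $\langle\nu_s,\phi\rangle$. The left-hand side converges to $\int V_s\phi\,dx$ by weak $L^2$-convergence. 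Hence $\nu_s$ restricted to $(0,\infty)$ has density $V_s$. Moreover, any atom at $0$ of mass $a>0$ would force $\Vert T_\varepsilon^{E,\kappa}\nu_s\Vert_{L^2}\geq a\,\Vert G_\varepsilon^{E,\kappa}(\cdot,0)\Vert_{L^2}$, which blows up of order $\varepsilon^{-1/4}$ as $\varepsilon\downarrow 0$, contradicting boundedness along $\varepsilon_n$. Thus there is no atom at $0$, so $V_s$ is the density of $\nu_s$ on all of $[0,\infty)$. Integrating $\Vert V_s\Vert_{L^2}^2 \leq A(\omega,s)^2$ over $\Omega\times[0,t]$ yields the claimed bound.

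The only delicate point is the third step, where we identify the weak $L^2$-limit as a genuine density of $\nu_s$. It reduces to the standard fact that the elastic heat semigroup acts as an approximate identity on smooth functions supported in the interior, which is immediate from the explicit form of $G_\varepsilon^{E,\kappa}$.
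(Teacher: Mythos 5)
Your argument is correct and follows essentially the same route as the paper's proof: Fatou's lemma applied to Lemma \ref{IntegratedSmoothedL2} to get an a.e.\ bounded sequence $(T_{\varepsilon_n}^{E,\kappa}\nu_s)_n$ in $L^2(0,\infty)$, weak compactness to extract a limit $V_s$, and identification of $V_s$ as the density by testing the smoothed measures against smooth functions. The only (harmless) cosmetic difference is that you treat the interior and a possible atom at $0$ separately, whereas the paper identifies the limit directly against all Schwartz functions, using that $\int_0^{\infty}G_{\varepsilon}^{E,\kappa}(x,0)\,dx \to 1$ so the approximate-identity property holds up to the boundary; your $\varepsilon^{-1/4}$ blow-up argument for ruling out an atom is a valid alternative.
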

\begin{proof}
 Fatou's lemma applied to the result of Lemma \ref{IntegratedSmoothedL2}  yields a  bounded sequence $(T_{\varepsilon_n}^{E,\kappa}\nu_t)_n$ in $L^2$ for almost every $(\omega, t) \in \Omega \times [0,T]$. Combining the weak compactness of bounded sequences in a Hilbert space with the weak convergence of the smoothed measure, along with Schwartz functions $\mathcal{S}$ being dense in $L^2$, gives the result.
\end{proof}

Fixing $\omega$ (in an almost sure set), the above proposition only guarantees that $V_t(\omega)$ exists in $L^2(0,\infty)$ for almost every $t\in[0,T]$. Restricting each measure $\nu_t$ to $(0,\infty)$, we obtain instead the existence of a density for all $t\in[0,T]$, with probability 1, but we only show $L^2$ integrability away from the origin, which of course leaves the possibility of a Dirac mass at zero. Before verifying this, we first show several auxiliary results. 

Take sets $U \Subset W \Subset (0,\infty)$, where $\Subset$ denotes compact containment, and let $\psi$ be a smooth cut-off function such that $\psi = 1$ on $U$, $\psi \in (0,1)$ on $W \setminus U$ and $\psi=0$ otherwise. We have
\begin{equation*}
    \lvert \partial_x \psi \rvert + \lvert \partial_{xx} \psi \rvert \leq C 1_{W \setminus U}.
\end{equation*}
where the constant $C$ only depends on $U$ and $W$.
Moreover, we define the error term
\begin{equation} \label{DerivativeError1}
    \Bar{\mathcal{E}}_{t,\varepsilon}^h(x) = \partial_x \nu_t(h_t G_\varepsilon^{E,\kappa}(x,\cdot))-h_t(x) \partial_x T_\varepsilon^{E,\kappa} \nu_t(x) + \partial_x h_t(x) \mathcal{H}_{t,\varepsilon}(x),
\end{equation}
where
\begin{equation} \label{DerivativeError2}
    \mathcal{H}_{t,\varepsilon}(x) = \nu_t((x-y) \partial_x G_\varepsilon^{E,\kappa}(x,\cdot)),
\end{equation}
for a function $h$ representing $\mu, \sigma^2$ or $\sigma$.
\begin{lemma} \label{BoundaryDerivateLemma1}
Let $(\nu,W^0)$ be the unique solution to the SPDE \eqref{MeasureSPDE} with $\nu$ in the class $\Lambda$. Then we have the convergence
\begin{equation*}
    \mathbb{E} \left[ \int_0^t \int_0^\infty \psi^2(x) \left(\int_0^\infty \partial_x p_\varepsilon(x+y)\nu_s(dy) \right)^2dxds \right] \to 0, \quad \text{as $\varepsilon \to 0$}
\end{equation*}
with the cut-off function $\psi$ specified above.
\end{lemma}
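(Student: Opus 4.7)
The key observation is that the cut-off $\psi$ is supported inside $W \Subset (0,\infty)$, so if we set $c := \mathrm{dist}(\mathrm{supp}(\psi),\{0\}) > 0$, then for any $x$ in the support of $\psi$ and any $y \geq 0$ we have $x+y \geq c$. Unlike Lemma \ref{boundaryEstimate1}, where the boundary weight $p_\varepsilon(x+\cdot)$ had to be estimated using the boundary decay property \ref{assptBound} of the class $\Lambda$, here the support of $\psi$ keeps us uniformly away from the origin, which means we can extract pure Gaussian decay in $1/\varepsilon$ without invoking any property of $\nu$ beyond being a sub-probability measure.

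Concretely, the plan is to start from the explicit formula
\[
\partial_x p_\varepsilon(x+y) = -\frac{x+y}{\varepsilon\sqrt{2\pi\varepsilon}} e^{-(x+y)^2/(2\varepsilon)},
\]
and split the exponent as $(x+y)^2/(2\varepsilon) = (x+y)^2/(4\varepsilon) + (x+y)^2/(4\varepsilon)$. On the support of $\psi$ and for $y \geq 0$, the second factor is bounded by $e^{-c^2/(4\varepsilon)}$, while the remaining $(x+y) e^{-(x+y)^2/(4\varepsilon)}$ attains its maximum over $z \geq 0$ at $z = \sqrt{2\varepsilon}$ with value $\sqrt{2\varepsilon}\,e^{-1/2}$. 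Hence there is a universal constant $C > 0$ such that, for all sufficiently small $\varepsilon > 0$,
\[
\sup_{x \in \mathrm{supp}(\psi)}\sup_{y \geq 0} \bigl|\partial_x p_\varepsilon(x+y)\bigr| \leq \frac{C}{\varepsilon} e^{-c^2/(4\varepsilon)}.
\]

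Using that $\nu_s$ is a sub-probability measure, we get the pointwise estimate
\[
\psi(x)\Bigl|\int_0^\infty \partial_x p_\varepsilon(x+y)\,\nu_s(dy)\Bigr| \leq \psi(x)\,\frac{C}{\varepsilon}\,e^{-c^2/(4\varepsilon)},
\]
squaring and integrating over $x$ (with $\psi$ compactly supported, so $\|\psi\|_2^2 < \infty$) and then over $s \in [0,t]$, we obtain the deterministic bound
\[
\int_0^t\!\int_0^\infty \psi^2(x)\Bigl(\int_0^\infty \partial_x p_\varepsilon(x+y)\,\nu_s(dy)\Bigr)^2 dx\,ds \leq \frac{C^2 \|\psi\|_2^2\, t}{\varepsilon^2}\, e^{-c^2/(2\varepsilon)},
\]
which vanishes as $\varepsilon \downarrow 0$. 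Taking expectation preserves the bound (the right-hand side is deterministic), yielding the claim.

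\paragraph{Expected difficulty.} There is essentially no obstacle: the whole point is that $\psi$ is supported strictly away from the boundary, so no delicate use of the regularity properties \ref{assptBound}--\ref{assptSpat} of the class $\Lambda$ is required, and no stochastic arguments enter. The only minor subtlety is choosing the splitting of the Gaussian exponent so that the $|x+y|/\varepsilon$ prefactor can be absorbed into the decaying factor and not cause issues as $x+y \to \infty$; the splitting above handles this cleanly.
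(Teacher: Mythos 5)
Your proof is correct and rests on exactly the same idea as the paper's: since $\psi$ is supported in $W \Subset (0,\infty)$, the Gaussian factor contributes decay of order $e^{-c^2/(C\varepsilon)}$ on $\mathrm{supp}(\psi)$, which overwhelms the polynomial blow-up of the prefactor, and no property of $\nu$ beyond being a sub-probability measure is needed. The only difference is bookkeeping: the paper splits the prefactor $(x+y)/\varepsilon$ into its $x$- and $y$-parts and bounds two integrals separately, whereas you split the exponent in half to get a single uniform sup bound, which is a slightly cleaner way of arriving at the same estimate.
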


\begin{proof}
Splitting the derivative of the Gaussian heat kernel into two parts we get
\begin{align*}
    &\int_0^\infty \psi^2(x) \left(\int_0^\infty \frac{1}{\sqrt{2 \pi \varepsilon}} \frac{x+y}{\varepsilon} e^{-\frac{(x+y)^2}{2\varepsilon}} \nu_s(dy) \right)^2dx \\
    &\leq 2 \int_0^\infty \psi^2(x) e^{-x^2/ \varepsilon}  \left(\int_0^\infty \frac{1}{\sqrt{2 \pi \varepsilon}} \frac{y}{\varepsilon} e^{-y^2/2\varepsilon} \nu_s(dy) \right)^2dx \\
    &+2 \int_0^\infty \psi^2(x) e^{-x^2/ \varepsilon} \frac{x^2}{\varepsilon^2} \left(\int_0^\infty \frac{1}{\sqrt{2 \pi \varepsilon}} e^{-y^2/2\varepsilon} \nu_s(dy) \right)^2dx.
\end{align*}
For the first term on the right-hand side we have
\begin{align*}
    &\int_0^\infty \psi^2(x) e^{-x^2/ \varepsilon} \frac{1}{2 \pi \varepsilon^3}  \left(\int_0^\infty  y e^{-y^2/2\varepsilon} \nu_s(dy) \right)^2dx \\
    &\leq \int_0^\infty \psi^2(x) e^{-x^2/\varepsilon} \frac{1}{2 \pi \varepsilon^2}  dx = O(\varepsilon^{-3/2}e^{-w^2/ 2 \varepsilon})
\end{align*}
where $w = \inf W$. Then we obtain
 \begin{align*}
     \mathbb{E} \left[ \int_0^t \int_0^\infty \psi^2(x) e^{-x^2/ \varepsilon}  \left(\int_0^\infty \frac{1}{\sqrt{2 \pi \varepsilon}} \frac{y}{\varepsilon} e^{-y^2/2\varepsilon} \nu_s(dy) \right)^2dx ds \right]
     = O(\varepsilon^{-3/2} e^{-w^2/2\varepsilon}).
 \end{align*}
We can argue similarly for the second term and hence obtain the desired convergence.
\end{proof}
A similar estimate holds for the elastic correction term $g_\varepsilon^{E,\kappa}$.
\begin{lemma} \label{BoundaryDerivativeLemma2}
Let $(\nu,W^0)$ be the unique solution to the SPDE \eqref{MeasureSPDE} with $\nu$ in the class $\Lambda$. Then we have the convergence
\begin{equation*}
    \mathbb{E} \left[ \int_0^t \int_0^\infty \psi^2(x) \left(\int_0^\infty \partial_x g_\varepsilon(x,y)\nu_s(dy) \right)^2dxds \right] \to 0, \quad \text{as $\varepsilon \to 0$}
\end{equation*}
with the cut-off function $\psi$ specified above.
\end{lemma}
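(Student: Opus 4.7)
My plan is to mimic the proof of Lemma~\ref{BoundaryDerivateLemma1}, exploiting the fact that $\psi$ is supported on $W \Subset (0,\infty)$, so that $x \geq w := \inf W > 0$ throughout the $x$-integral. The key feature of the correction term $g_\varepsilon^{E,\kappa}(x,y)$ in the elastic heat kernel \eqref{ElasticKernel} is that, like $p_\varepsilon(x+y)$, it exhibits Gaussian decay in $(x+y)^2/\varepsilon$ (this is already implicit in Lemma~\ref{ElasticCorrectionTermEstimate}, which bounds the value at $x=0$ by $\kappa e^{-y^2/(2\varepsilon)}$, and the explicit form of $g_\varepsilon^{E,\kappa}$ as a $\kappa$-tilted integral of $p_\varepsilon(\cdot+z)$ against $\kappa e^{-\kappa z}dz$ makes this transparent for all $x\geq 0$).

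First I would write out $\partial_x g_\varepsilon^{E,\kappa}(x,y)$ using the explicit form of the elastic heat kernel, and show a pointwise bound of the shape
\[
\bigl|\partial_x g_\varepsilon^{E,\kappa}(x,y)\bigr| \leq C_\kappa \Bigl(\tfrac{x+y}{\varepsilon} + 1\Bigr) e^{-(x+y)^2/(2\varepsilon)}, \qquad x,y \geq 0,
\]
uniformly for $\varepsilon$ in a bounded range. This parallels the bound on $\partial_x p_\varepsilon(x+y)$ used in Lemma~\ref{BoundaryDerivateLemma1}; the extra factors coming from differentiating the $\kappa$-tilt are harmless because they only contribute polynomial factors that are absorbed into the Gaussian.

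Next I would use the fact that on the support of $\psi$, $e^{-(x+y)^2/(2\varepsilon)} \leq e^{-x^2/(2\varepsilon)} e^{-y^2/(2\varepsilon)} \leq e^{-w^2/(2\varepsilon)} e^{-y^2/(2\varepsilon)}$, and apply Cauchy--Schwarz (or simply use that $\nu_s$ is a sub-probability measure) on the inner integral against $\nu_s(dy)$. Together with the analogous split of $(x+y)/\varepsilon$ into $x/\varepsilon + y/\varepsilon$, exactly as in the proof of Lemma~\ref{BoundaryDerivateLemma1}, this yields
\[
\int_0^\infty \psi^2(x)\left(\int_0^\infty \partial_x g_\varepsilon^{E,\kappa}(x,y)\,\nu_s(dy)\right)^2 dx \leq C(\psi,\kappa)\,\varepsilon^{-k}\, e^{-w^2/(2\varepsilon)}
\]
for some fixed power $k$. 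Since $\nu_s$ is a sub-probability measure, the inner integrand controlled via $e^{-y^2/(2\varepsilon)}$ is uniformly bounded by a constant (or again exponentially small), so the $s$- and $\omega$-dependence causes no trouble when we take expectation and integrate in $s\in[0,t]$.

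The main thing to verify carefully is that differentiating the tilted part of $g_\varepsilon^{E,\kappa}$ does not introduce singular $\varepsilon^{-1/2}$ terms that are not compensated by the Gaussian tail, but since $x \geq w > 0$ on $\mathrm{supp}(\psi)$, the super-exponential factor $e^{-w^2/(2\varepsilon)}$ dominates any polynomial blow-up in $\varepsilon^{-1}$, giving the claimed convergence to zero. No appeal to the class $\Lambda$ properties at the boundary is needed here: the cut-off away from $0$ already makes the estimate trivial in that respect, so the regularity in Definition~\ref{regularityClass} is used only through the fact that $\nu_s$ is a sub-probability measure.
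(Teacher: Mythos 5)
Your proposal is correct and follows essentially the same route as the paper: differentiate $g_\varepsilon^{E,\kappa}$ explicitly, bound the derivative by a Gaussian in $(x+y)^2/2\varepsilon$ with a polynomially singular prefactor, use that $\nu_s$ is a sub-probability measure to control the inner integral, and let the factor $e^{-w^2/(2\varepsilon)}$ coming from $\supp\psi \subseteq W \Subset (0,\infty)$ absorb everything. The only small correction is that the exact prefactor is $\kappa^2 + 2\kappa/\sqrt{2\pi\varepsilon}$ (the two terms coming from differentiating the exponential tilt and the error function, respectively) rather than $C_\kappa\bigl(\tfrac{x+y}{\varepsilon}+1\bigr)$, but this changes nothing since either is dominated by the super-exponential decay on the support of $\psi$, exactly as you argue.
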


\begin{proof}
The derivative of the elastic correction term $g_\varepsilon^{E,\kappa}$ is given by
\begin{align*}
    \partial_x g_\varepsilon(x,y) = &\partial_x \left(\kappa e^{\kappa(x+y)}e^{\frac{\kappa^2 \varepsilon}{2}} \left(1-\mbox{Erf}\left(\frac{x+y+\kappa \varepsilon}{\sqrt{2 \varepsilon}} \right) \right) \right) \\
    = &2 \kappa^2 e^{\kappa(x+y)} e^{\frac{\kappa^2 \varepsilon}{2}} \frac{1}{\sqrt{2 \pi \varepsilon}} \int_0^\infty \exp\left(-\frac{(z+x+y+\kappa \varepsilon)^2}{2 \varepsilon} \right)dz \\
    &- 2\kappa e^{\kappa(x+y)} e^{\frac{\kappa^2 \varepsilon}{2}} \frac{1}{\sqrt{2 \pi \varepsilon}} \exp \left(-\frac{(y+x+\kappa \varepsilon)^2}{2 \varepsilon} \right).
\end{align*}
We can then estimate
\begin{align*}
    2 \kappa^2 e^{\kappa(x+y)} e^{\frac{\kappa^2 \varepsilon}{2}} \frac{1}{\sqrt{2 \pi \varepsilon}} \int_0^\infty \exp\left(-\frac{(z+x+y+\kappa \varepsilon)^2}{2 \varepsilon} \right)dz \leq \kappa^2 e^{-\frac{(x+y)^2}{2 \varepsilon}} 
\end{align*}
by Lemma \ref{ElasticCorrectionTermEstimate}. For the other term we get
\begin{equation*}
    2\kappa e^{\kappa(x+y)} e^{\frac{\kappa^2 \varepsilon}{2}} \frac{1}{\sqrt{2 \pi \varepsilon}} \exp \left(-\frac{(y+x+\kappa \varepsilon)^2}{2 \varepsilon} \right) \leq \frac{2 \kappa}{\sqrt{2 \pi \varepsilon}} e^{-\frac{(x+y)^2}{2 \varepsilon}}.
\end{equation*}
Thus, we get
\begin{align*}
    &\mathbb{E} \int_0^t \int_0^\infty \psi^2(x) \left(\int_0^\infty \partial_x g_\varepsilon(x,y) \nu_s(dy) \right)^2 dx ds\\
    &\leq  \mathbb{E} \int_0^t \int_0^\infty \psi^2(x) \kappa^4 \left(\int_0^\infty e^{-\frac{(x+y)^2}{2\varepsilon}} \nu_s(dy)  \right)^2dxds \\
    & \quad + \mathbb{E} \int_0^t \int_0^\infty \psi^2(x) 4 \kappa^2 \left(\int_0^\infty \frac{1}{\sqrt{2 \pi \varepsilon}} e^{-\frac{(x+y)^2}{2\varepsilon}} \nu_s(dy) \right)^2dxds \\
    &\leq \mathbb{E} \int_0^t \int_0^\infty \psi^2(x) \kappa^4 e^{-\frac{x^2}{\varepsilon}}dxds + \mathbb{E} \int_0^t \int_0^\infty \psi^2(x) \frac{e^{-x^2/\varepsilon}}{2 \pi \varepsilon} dxds  =  O(\varepsilon^{-1/2}e^{-w^2/2\varepsilon})
\end{align*} 
with $w = \inf W$.
\end{proof}
The necessary controls on the error term $\Bar{\mathcal{E}}^h_{t,\varepsilon}$ are proved in the following lemmas.
\begin{lemma} \label{EErrorEstimate}
Let $\bar{\mathcal{E}}^h_{s,\varepsilon}$ be the error term defined in \eqref{DerivativeError1} and $\psi$ the cut-off function specified above. Then we have the convergence
\begin{equation*}
    \mathbb{E} \left[ \int_0^t \norm{\psi \bar{\mathcal{E}}^h_{s,\varepsilon}}_2^2ds \right] \to 0 \quad \text{as $\varepsilon \to 0$}.
\end{equation*}
\end{lemma}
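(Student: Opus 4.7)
The plan is to recognise $\bar{\mathcal{E}}^h_{s,\varepsilon}(x)$ as a second-order Taylor residual and then exploit the Gaussian scaling of $\partial_x G_\varepsilon^{E,\kappa}$ together with the fact that $\psi$ vanishes near the origin and at infinity. First I would Taylor-expand
\[
h_s(y) = h_s(x) + (y-x)\partial_x h_s(x) + R(x,y), \qquad |R(x,y)| \leq \tfrac{1}{2} C_{\sigma,\mu} (y-x)^2,
\]
which is valid by Assumption \ref{AssumptionSpace}(i). Plugging this into $\partial_x \nu_s(h_s G_\varepsilon^{E,\kappa}(x,\cdot)) = \int h_s(y) \partial_x G_\varepsilon^{E,\kappa}(x,y) \nu_s(dy)$, the constant-in-$y$ piece reproduces $h_s(x) \partial_x T_\varepsilon^{E,\kappa}\nu_s(x)$, and the linear-in-$(y-x)$ piece, after using the identity $\int (y-x) \partial_x G_\varepsilon^{E,\kappa}(x,y) \nu_s(dy) = -\mathcal{H}_{s,\varepsilon}(x)$, produces $-\partial_x h_s(x) \mathcal{H}_{s,\varepsilon}(x)$. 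Both cancel against the corresponding subtractions in \eqref{DerivativeError1}, leaving the clean identity
\[
\bar{\mathcal{E}}^h_{s,\varepsilon}(x) = \int R(x,y)\, \partial_x G_\varepsilon^{E,\kappa}(x,y)\, \nu_s(dy).
\]

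Next I would split $\partial_x G_\varepsilon^{E,\kappa}(x,y) = \partial_x p_\varepsilon(x-y) + \partial_x p_\varepsilon(x+y) - \partial_x g_\varepsilon^{E,\kappa}(x,y)$ and treat the three pieces separately. For the bulk kernel $\partial_x p_\varepsilon(x-y)$, applying Cauchy--Schwarz against the sub-probability measure $\nu_s$ and using Fubini reduces the task to bounding $\int_0^\infty \psi(x)^2 (y-x)^4\bigl(\partial_x p_\varepsilon(x-y)\bigr)^2\,dx$ uniformly in $y \geq 0$. Rewriting $(y-x)^2 \partial_x p_\varepsilon(x-y) = \varepsilon^{-1}(y-x)^3 p_\varepsilon(x-y)$ and rescaling $u = (x-y)/\sqrt{\varepsilon}$, an explicit Gaussian computation produces an $O(\sqrt{\varepsilon})$ bound: the Taylor factor $(y-x)^4$ exactly compensates the $\varepsilon^{-2}$ blow-up of $(\partial_x p_\varepsilon)^2$. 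Taking expectations and integrating in $s$ (and using $\nu_s(\mathbb{R})\leq 1$) yields the desired $O(\sqrt{\varepsilon})$ decay for this piece.

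For the two boundary kernels $\partial_x p_\varepsilon(x+y)$ and $\partial_x g_\varepsilon^{E,\kappa}(x,y)$, the key point is that $\psi$ is supported in $W \Subset (0,\infty)$, so $x + y \geq w := \inf W > 0$ throughout. The factor $e^{-(x+y)^2/(2\varepsilon)}$ (and the analogous bound on $\partial_x g_\varepsilon^{E,\kappa}$ derived from Lemma \ref{ElasticCorrectionTermEstimate}) then provides the super-polynomial decay $e^{-w^2/(2\varepsilon)}$. The arguments here mirror Lemmas \ref{BoundaryDerivateLemma1} and \ref{BoundaryDerivativeLemma2}, except that the extra quadratic Taylor weight $(y-x)^2$ must be dealt with.

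The main obstacle I anticipate is precisely this last point: after Cauchy--Schwarz one faces a factor $(y-x)^4$ times a Gaussian-in-$(x+y)$ kernel, and since $y$ is unbounded the polynomial weight could threaten the estimate. The way to close it is to split the $y$-integral at a threshold such as $y = \varepsilon^{-1/4}$, using $e^{-y^2/(2\varepsilon)}$ directly on $\{y \geq \varepsilon^{-1/4}\}$ for super-polynomial decay, and invoking the exponential tail property \ref{assptTail} of the class $\Lambda$ on the complement to control the $\mathbb{E}\int \nu_s$-mass against the polynomial weight. This book-keeping, rather than any conceptual difficulty, is the most delicate part of the proof.
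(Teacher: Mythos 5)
Your proposal is correct and follows essentially the same route as the paper: the second-order Taylor bound $\lvert \bar{\mathcal{E}}^h_{s,\varepsilon}\rvert \leq C\Vert\partial_{xx}h\Vert_\infty \int \lvert x-y\rvert^2 \lvert\partial_x G_\varepsilon^{E,\kappa}\rvert\,\nu_s(dy)$, the three-way split of $\partial_x G_\varepsilon^{E,\kappa}$, a Gaussian rescaling for the bulk kernel (which the paper instead imports from the absorbing case in \cite[Lemma 8.2]{HamblyLedger2017}), and the $e^{-w^2/2\varepsilon}$ decay of the boundary kernels on the support of $\psi$ as in Lemmas \ref{BoundaryDerivateLemma1} and \ref{BoundaryDerivativeLemma2}. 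The only remark worth making is that the ``main obstacle'' you anticipate is not one: on $\supp\psi$ one has $(y-x)^4\leq (x+y)^4$ and the factor $(x+y)^4 e^{-(x+y)^2/2\varepsilon}$ is absorbed directly into a slightly widened Gaussian, so no splitting of the $y$-integral or appeal to the tail property of $\Lambda$ is needed.
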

\begin{proof}
We can combine the estimates in \cite[Lemma 8.2]{HamblyLedger2017}  with the definition of the elastic heat kernel to get
\begin{align*}
    \lvert \bar{\mathcal{E}}_{s,\varepsilon}^h \rvert &\leq C \norm{\partial_{xx}h}_\infty \int_0^\infty \lvert x-y \rvert^2 \lvert \partial_x G_\varepsilon^{E,\kappa} \rvert \nu_s(dy) \\
    &\leq C \norm{\partial_{xx}h}_\infty \int_0^\infty \lvert x-y \rvert^3 \varepsilon^{-1}p_\varepsilon(x-y) \nu_s(dy) \\
    & \quad +C \norm{\partial_{xx}h}_\infty \int_0^\infty \lvert x-y \rvert^2 \lvert \partial_x p_\varepsilon(x+y) \rvert \nu_s(dy) \\
    & \quad + C\norm{\partial_{xx}h}_\infty \int_0^\infty \lvert x-y \rvert^2 \lvert \partial_x g_\varepsilon^{E,\kappa}(x,y) \rvert \nu_s(dy).
\end{align*}
The result follows for the first term by the results from the absorbing case in \cite[Lemma 8.2]{HamblyLedger2017} . Similar to the calculations in Lemma \ref{BoundaryDerivateLemma1} and Lemma \ref{BoundaryDerivativeLemma2} we can show that
\begin{equation*}
    \mathbb{E} \left[ \int_0^t \int_0^\infty \psi^2(x) \left( \int_0^\infty \lvert x-y \rvert^2 \lvert \partial_x g_\varepsilon(x,y)^{E,\kappa} \rvert \nu_s(dy) \right)^2 dx ds \right] = O(\varepsilon^{-1/2}e^{-w^2/2\varepsilon})
\end{equation*}
and
\begin{equation*}
     \mathbb{E} \left[ \int_0^t \int_0^\infty \psi^2(x) \left( \int_0^\infty \lvert x-y \rvert^2 \lvert \partial_x p_\varepsilon(x+y) \rvert \nu_s(dy) \right)^2 dx ds \right] = O(\varepsilon^{-5/2}e^{-w^2/2\varepsilon}).
\end{equation*}
\end{proof}
To simplify the notation, we denote by $o^\psi_{sq}(1)$ any family of functions $\{(f_{t,\varepsilon})_{t \in [0,T]} \}_{\varepsilon>0}$ such that
\begin{equation*}
    \mathbb{E} \left[ \int_0^T \norm{\psi f_{t,\varepsilon}}^2_2 dt \right] \to 0, \qquad \text{as $\varepsilon \to 0$}.
\end{equation*}
The final auxiliary result is a control on the other error term $\mathcal{H}_{t,\varepsilon}$.
\begin{lemma} \label{HErrorEstimate}
Let $\mathcal{H}_{t,\varepsilon}$ be the error term defined in \eqref{DerivativeError2}. Then there is a constant $c_{\mathcal H}$ such that
\begin{equation*}
    \lvert \mathcal{H}_{t,\varepsilon} \rvert \leq c_{\mathcal H} \lvert T_{2\varepsilon}^{E,\kappa} \nu_t \rvert + o_{sq}^\psi(1)
\end{equation*}
for $\varepsilon>0$ small enough.
\end{lemma}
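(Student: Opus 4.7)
The plan is to use the explicit decomposition
\[
G_\varepsilon^{E,\kappa}(x,y)=p_\varepsilon(x-y)+p_\varepsilon(x+y)-g_\varepsilon^{E,\kappa}(x,y)
\]
to split $\mathcal{H}_{t,\varepsilon}$ into three pieces and argue that the first produces the term $c_{\mathcal H}|T_{2\varepsilon}^{E,\kappa}\nu_t|$, while the other two are $o_{sq}^\psi(1)$. The unifying mechanism is that each polynomial factor of $(x-y)$, $(x+y)$ or $y$ sitting next to a derivative of a Gaussian at width $\varepsilon$ exactly cancels the $\varepsilon^{-1}$ produced by the derivative; via the elementary bound $u^2 e^{-u^2/2}\lesssim e^{-u^2/4}$, everything can then be dominated by a heat kernel at the doubled width $2\varepsilon$, which is precisely why $T_{2\varepsilon}^{E,\kappa}$ appears on the right-hand side.

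For the main piece, I would use the identity $(x-y)\partial_x p_\varepsilon(x-y)=-(x-y)^2\varepsilon^{-1}p_\varepsilon(x-y)$ together with the inequality above to obtain the pointwise bound $|(x-y)\partial_x p_\varepsilon(x-y)|\leq c\,p_{2\varepsilon}(x-y)$. Writing
\[
T_{2\varepsilon}^{E,\kappa}\nu_t(x)=\int_0^\infty p_{2\varepsilon}(x-y)\nu_t(dy)+\int_0^\infty p_{2\varepsilon}(x+y)\nu_t(dy)-\int_0^\infty g_{2\varepsilon}^{E,\kappa}(x,y)\nu_t(dy),
\]
and invoking Lemma \ref{boundaryEstimate1} and Lemma \ref{BoundaryEstimate2} at width $2\varepsilon$ to show that the last two integrals are $o_{sq}(1)$, I obtain $\int_0^\infty p_{2\varepsilon}(x-y)\nu_t(dy)\leq T_{2\varepsilon}^{E,\kappa}\nu_t(x)+o_{sq}(1)$, which handles this contribution. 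For the reflecting Gaussian piece, I would decompose $x-y=(x+y)-2y$ and use $y\leq x+y$ together with the same elementary inequality to bound $(x-y)\partial_xp_\varepsilon(x+y)$ by $c\,p_{2\varepsilon}(x+y)$; Lemma \ref{boundaryEstimate1} at width $2\varepsilon$ then makes this entire piece $o_{sq}(1)\subseteq o_{sq}^\psi(1)$.

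Finally, for the correction piece I would invoke the explicit formula for $\partial_x g_\varepsilon^{E,\kappa}$ computed inside the proof of Lemma \ref{BoundaryDerivativeLemma2}, which gives super-Gaussian decay $|\partial_x g_\varepsilon^{E,\kappa}(x,y)|\lesssim(1+\varepsilon^{-1/2})e^{-(x+y)^2/(2\varepsilon)}$; the extra factor $|x-y|$ is absorbed into the exponential at the price of another constant, and after squaring, multiplying by $\psi^2$ (which restricts $x$ to $W\subset(0,\infty)$ and so forces $x\geq w>0$), and taking expectation, the contribution is of order $O(\varepsilon^{-k}e^{-w^2/(2\varepsilon)})$ for some fixed $k$, hence $o_{sq}^\psi(1)$. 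Combining the three estimates yields the claimed inequality, with $c_{\mathcal H}$ determined by the first step. The main obstacle is purely bookkeeping: one must make sure the cancellation between the polynomial weight $(x-y)$ and the $\varepsilon^{-1}$ from the derivative goes through uniformly in all three pieces, which forces us to accept the doubled width $2\varepsilon$ on the right-hand side and is the reason the two preceding lemmas are stated at width $\varepsilon$ rather than directly at $2\varepsilon$.
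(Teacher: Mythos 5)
Your proposal is correct and follows essentially the same route as the paper: split $\mathcal{H}_{t,\varepsilon}$ via $G_\varepsilon^{E,\kappa}=p_\varepsilon(x-\cdot)+p_\varepsilon(x+\cdot)-g_\varepsilon^{E,\kappa}$, absorb the weight $(x-y)$ into a Gaussian of doubled width to get the $T_{2\varepsilon}^{E,\kappa}\nu_t$ term (the paper outsources this cancellation to \cite[Lemma 8.2]{HamblyLedger2017}, whereas you spell it out), and dispose of the $p_\varepsilon(x+\cdot)$ and $g_\varepsilon^{E,\kappa}$ pieces as in Lemmas \ref{BoundaryDerivateLemma1} and \ref{BoundaryDerivativeLemma2}. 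The only cosmetic difference is that the paper passes from $\nu_t(p_{2\varepsilon}(x-\cdot))$ to $T_{2\varepsilon}^{E,\kappa}\nu_t$ directly via the pointwise domination $g_{2\varepsilon}^{E,\kappa}(x,y)\leq p_{2\varepsilon}(x+y)$ (so $G_{2\varepsilon}^{E,\kappa}\geq p_{2\varepsilon}(x-\cdot)$), while you add and subtract the remaining kernel pieces and fold them into the $o_{sq}^\psi(1)$ error, which the statement equally permits.
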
 
\begin{proof}
We can use the definition of the elastic heat kernel $G_\varepsilon^{E,\kappa}$ to estimate
\begin{equation*}
    \lvert \mathcal{H}_{t,\varepsilon} \rvert \leq \lvert \nu_t((x-y)\partial_x p_\varepsilon(x-y)) \rvert + \lvert \nu_t((x-y)\partial_x p_\varepsilon(x+y)) \rvert + \lvert \nu_t((x-y)\partial_x g_\varepsilon^{E,\kappa}(x,y)) \rvert.
\end{equation*}
For the first term there is a constant $c$ such that
\begin{equation*}
    \lvert \nu_t((x-y)\partial_x p_\varepsilon(x-y)) \rvert \leq c \nu_t(p_{2\varepsilon}(x-y))
\end{equation*}
by \cite[Lemma 8.2]{HamblyLedger2017}. Moreover, we have the estimate
\begin{equation*}
    g_\varepsilon^{E,\kappa}(x,y) \leq \kappa e^{-\frac{(x+y)^2}{2\varepsilon}} \leq p_{\varepsilon}(x+y),
\end{equation*}
for $\varepsilon$ small enough. Thus there is a constant $C$ such that
\begin{equation*}
    \lvert \nu_t((x-y)\partial_x p_\varepsilon(x-y)) \rvert \leq C  T_{2\varepsilon}^{E,\kappa} \nu_t. 
\end{equation*}
The estimates for the other two terms follows as in Lemma \ref{BoundaryDerivateLemma1} and Lemma \ref{BoundaryDerivativeLemma2}.
\end{proof}
Now we have all the tools to prove the existence of a density in the interior.
\begin{proposition} \label{DensityInterior}
Let $(\nu,W^0)$ be the unique solution to the SPDE \eqref{MeasureSPDE} with $\nu$ in the class $\Lambda$.  The measure $\nu_t$ restricted to $(0,\infty)$ has a density $V_t$ for all $t \in [0,T]$ with probability 1. The density is square integrable on $(\delta,\infty)$ for every $\delta>0$.
\end{proposition}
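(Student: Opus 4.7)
The plan is to derive a localized $L^2$-energy estimate for the mollified measure $T_\varepsilon^{E,\kappa} \nu_t$ away from the origin, uniform in $\varepsilon$, and then extract the density by weak $L^2$-compactness. The localization through a cut-off function vanishing at zero is what allows us to dispense with the subtle noisy boundary condition that forced the $H^{-1}$-framework of the uniqueness proof.

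Fix $\delta > 0$ and let $\psi \in C^\infty(\R)$ satisfy $\psi \equiv 0$ on $[0,\delta/2]$, $\psi \equiv 1$ on $[\delta,\infty)$, and $\psi \in [0,1]$ elsewhere. First I would apply the SPDE \eqref{MeasureSPDE} with the test function $y \mapsto G_\varepsilon^{E,\kappa}(x,y) \in \mathcal{C}^{E,\kappa}_0(\R)$ and use Lemma \ref{derivativeSwitch} to swap the derivatives from $y$ to $x$, obtaining dynamics for $f_\varepsilon(t,x) := T_\varepsilon^{E,\kappa} \nu_t(x)$ in conservative form, modulo correction terms involving $p_\varepsilon(x+\cdot)$, $g_\varepsilon^{E,\kappa}(x,\cdot)$ and the error quantities $\bar{\mathcal{E}}^h_{t,\varepsilon}$ and $\mathcal{H}_{t,\varepsilon}$. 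Then apply It\^o's formula to $f_\varepsilon(t,x)^2$, integrate against $\psi^2(x)\,dx$, and integrate by parts: all boundary terms vanish (at $x=0$ because $\psi(0) = 0$, and at $x=\infty$ by the exponential tail from the class $\Lambda$). The second-order drift produces the key negative contribution $-\int \psi^2 \sigma_t^2 (\partial_x f_\varepsilon)^2 dx$, while the quadratic variation contributes $+\rho_t^2 \int \psi^2 (\partial_x(\sigma_t f_\varepsilon))^2 dx$. Since $\rho_t^2 < 1$, Young's inequality with a small parameter $\eta$ leaves a net negative coefficient $-c_0$ in front of $\int \psi^2 (\partial_x f_\varepsilon)^2 dx$, exactly as in \eqref{GronwallSetup}.

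The remaining terms split into three kinds: (i) lower-order terms dominated by $C_\eta \int \psi^2 f_\varepsilon^2 dx$ via Young's inequality; (ii) localization terms arising from $\partial_x \psi$ and $\partial_{xx} \psi$, all supported on the compact set $[\delta/2, \delta] \subset (0,\infty)$ and therefore controlled by the global $L^2$-bound of Lemma \ref{IntegratedSmoothedL2}; and (iii) the boundary-kernel contributions and the error terms $\bar{\mathcal{E}}^h$ and $\mathcal{H}$, all of which are $o_{sq}^\psi(1)$ by Lemmas \ref{BoundaryDerivateLemma1}, \ref{BoundaryDerivativeLemma2}, \ref{EErrorEstimate}, and \ref{HErrorEstimate}. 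After taking expectation (the stochastic integral is a martingale thanks to the exponential tails), applying the Burkholder--Davis--Gundy inequality to the martingale piece and absorbing its quadratic variation into the good $-c_0 \int \psi^2 (\partial_x f_\varepsilon)^2 dx$ term, Gronwall's lemma yields
\begin{equation*}
\mathbb{E}\Bigl[\sup_{t \in [0,T]} \int_\delta^\infty f_\varepsilon(t,x)^2\,dx\Bigr] \leq C_\delta, \qquad \text{uniformly in } \varepsilon > 0.
\end{equation*}

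On a set of full probability and for every $t \in [0,T]$, this uniform bound lets us extract, by weak $L^2$-compactness, a subsequential weak limit $V_t^\delta \in L^2(\delta,\infty)$ of $f_\varepsilon(t,\cdot)\rvert_{(\delta,\infty)}$; the weak-$*$ convergence of measures $f_\varepsilon(t,\cdot)\,dx \to \nu_t$ identifies $V_t^\delta$ as the density of $\nu_t$ on $(\delta,\infty)$. Letting $\delta$ traverse a countable sequence tending to $0$ and using consistency on overlaps yields a single density $V_t$ for $\nu_t\rvert_{(0,\infty)}$ belonging to $L^2(\delta,\infty)$ for every $\delta > 0$, simultaneously for all $t \in [0,T]$, with probability one. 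The main obstacle will be the careful bookkeeping of cross-terms in the integration by parts so as to guarantee the negative net coefficient of $\int \psi^2 (\partial_x f_\varepsilon)^2 dx$ and to absorb the BDG quadratic variation uniformly in $\varepsilon$; this is precisely the step where the bounds from the auxiliary lemmas on the kernel corrections and the error terms $\bar{\mathcal{E}}^h$, $\mathcal{H}$ become essential.
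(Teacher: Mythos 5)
Your proposal follows essentially the same route as the paper's proof: mollify with the elastic kernel, apply It\^o's formula to the square of $T_\varepsilon^{E,\kappa}\nu_t$, localize with a cut-off vanishing near the origin so the noisy boundary condition never enters, exploit $\rho^2<1$ to retain a negative gradient term after integration by parts and Young's inequality, control the kernel corrections and error terms with the auxiliary lemmas, apply Gronwall, and conclude by weak $L^2$-compactness. The only cosmetic difference is that your cut-off equals $1$ on all of $[\delta,\infty)$ rather than being compactly supported as in the paper, which is harmless since the relevant estimates only require $\psi$ to vanish near the origin.
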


\begin{proof}
We proceed similarly to the uniqueness proof in Section \ref{Sec:LinUniq}. We use the test function $y \mapsto G_\varepsilon^{E,\kappa}(x,y)$ in the SPDE \eqref{MeasureSPDE}, switch derivatives from $y$ to $x$ and take the coefficients outside the integration against the measure. This creates the error terms defined in \eqref{DerivativeError1} and \eqref{DerivativeError2}. Applying It{\^o}'s formula we can then derive the dynamics

\begin{align*}
    d(T_\varepsilon \nu_t)^2 =& - 2 T_\varepsilon^{E,\kappa} \nu_t \left(\mu_t \partial_x T_\varepsilon^{E,\kappa} \nu_t - \partial_x \mu_t \mathcal{H}_{t,\varepsilon} + \bar{\mathcal{E}}_{t,\varepsilon}^\mu \right)dt \\
    &+ T_\varepsilon^{E,\kappa} \nu_t \partial_x \left(\sigma_t^2 \partial_x T_\varepsilon^{E,\kappa} \nu_t - \partial_x \sigma_t^2 \mathcal{H}_{t,\varepsilon} + \bar{\mathcal{E}}_{t,\varepsilon}^{\sigma^2} \right) dt \\
    &- 2 T_\varepsilon^{E,\kappa} \nu_t \rho_t \left(\sigma_t \partial_x T_\varepsilon^{E,\kappa} \nu_t - \partial_x \sigma_t \mathcal{H}_{t,\varepsilon} + \bar{\mathcal{E}}_{t,\varepsilon}^\sigma \right)dW_t^0 \\
    &+4 T_\varepsilon^{E,\kappa} \partial_x \nu_t(\mu_t p_\varepsilon(x+\cdot))dt + 4 T_\varepsilon^{E,\kappa} \nu_t \partial_x \nu_t(\rho_t \sigma_t p_\varepsilon(x+\cdot))dW_t^0 \\
    &-4 T_\varepsilon^{E,\kappa} \nu_t \partial_x \nu_t(\mu_t g_\varepsilon^{E,\kappa}(x,\cdot))dt - 4 T_\varepsilon^{E,\kappa} \nu_t \partial_x \nu_t(\rho_t \sigma_t g_\varepsilon^{E,\kappa}(x,\cdot))dW_t^0 \\
    &+ \left(\rho_t \left(\sigma_t \partial_x T_\varepsilon^{E,\kappa} \nu_t - \partial_x \sigma_t \mathcal{H}_{t,\varepsilon} + \bar{\mathcal{E}}_{t,\varepsilon}^\sigma \right)+2 \partial_x \nu_t(\rho_t \sigma_t p_\varepsilon(x+\cdot))- 2 \partial_x \nu_t(\rho_t \sigma_t g_\varepsilon(x,\cdot)) \right)^2dt.
\end{align*}

Next, we multiply the equation by $\psi^2$ and integrate over $x$. We then estimate all the terms on the right-hand side individually using the results of Lemma \ref{BoundaryDerivateLemma1}, Lemma \ref{BoundaryDerivativeLemma2}, Lemma \ref{EErrorEstimate} and Lemma \ref{HErrorEstimate} together with the generalized Young's inequality. For the first term we have

\begin{align*}
    &-2 \int_0^t \int_0^\infty  \psi^2 T_\varepsilon^{E,\kappa} \nu_s \left( \mu_s \partial_x T_\varepsilon^{E,\kappa} \nu_s- \partial_x \mu_s \mathcal{H}_{s,\varepsilon} + \Bar{\mathcal{E}}_{s,\varepsilon}^\mu \right)dxds \\
    &\leq c_\eta \int_0^t \norm{\psi T_\varepsilon^{E,\kappa} \nu_s}_2^2ds + c_\eta  \int_0^t \norm{\psi T_{2 \varepsilon}^{E,\kappa} \nu_s}_2^2ds + \eta \int_0^t \int_0^\infty \mu_s^2 \psi^2 \lvert \partial_x T_\varepsilon^{E,\kappa} \nu_s \rvert^2dxds +o(1).
\end{align*}

For the term in the second line we apply integration by parts and obtain
\begin{align*}
    & \int_0^t \int_0^\infty \psi^2 T_\varepsilon^{E,\kappa} \nu_s \partial_x \left(\sigma^2 \partial_x T_\varepsilon^{E,\kappa} \nu_s - \partial_x \sigma^2_s \mathcal{H}_{s,\varepsilon}+\bar{\mathcal{E}}_{s,\varepsilon}^{\sigma^2}  \right)dxds \\
    &= - \int_0^t \int_0^\infty \psi^2 \sigma^2 \lvert \partial_x T_\varepsilon^{E,\kappa} \nu_s \rvert^2 dxds - 2\int_0^t \int_0^\infty \psi \psi^\prime T_\varepsilon^{E,\kappa} \nu_s \partial_x T_\varepsilon^{E,\kappa} \nu_s \sigma^2_s dxds \\
    &\quad+\int_0^t \int_0^\infty \psi^2 \partial_x T_\varepsilon^{E,\kappa} \nu_s \partial_x \sigma_s^2 \mathcal{H}_{s,\varepsilon} dxds + 2\int_0^t \int_0^\infty \psi \psi^\prime T_\varepsilon^{E,\kappa} \nu_s \partial_x \sigma_s^2 \mathcal{H}_{s,\varepsilon} dxds \\
    &\quad-  \int_0^t \int_0^\infty \psi^2 \partial_x T_\varepsilon^{E,\kappa} \nu_s \bar{\mathcal{E}}_{s,\varepsilon}^{\sigma^2} - 2 \int_0^t \int_0^\infty \psi \psi^\prime T_\varepsilon^{E,\kappa} \nu_s \bar{\mathcal{E}}_{s,\varepsilon}^{\sigma^2}dxds \\
    &\leq -  \int_0^t \int_0^\infty \psi^2 \sigma^2 \lvert \partial_x T_\varepsilon^{E,\kappa} \nu_s \rvert^2 dxds + \eta \int_0^t \int_0^\infty \psi^2 \lvert \partial_x T_\varepsilon^{E,\kappa} \nu_s \rvert^2 dxds  \\
    &\quad + c_\eta \int_0^t \int_0^\infty \lvert T_\varepsilon^{E,\kappa} \nu_s \rvert^2dxds + c_\eta  \int_0^t \int_0^\infty \lvert T_{2\varepsilon}^{E,\kappa} \nu_s \rvert^2 dxds +o (1).
\end{align*}

For now, we leave the terms involving stochastic integrals unchanged. The remaining terms in the fourth and fifth line can be estimated via
\begin{align*}
    &4 \int_0^t \int_0^\infty \psi^2 T_\varepsilon^{E,\kappa} \nu_s \partial_x \nu_s(\mu_s p_\varepsilon(x+\cdot))dx ds + 4 \int_0^t \int_0^\infty \psi^2 T_\varepsilon^{E,\kappa} \nu_s \partial_x \nu_s(\mu_s g_\varepsilon^{E,\kappa}(x,\cdot))dx ds\\
    &\leq c_\eta \int_0^t \norm{\psi T_\varepsilon^{E,\kappa} \nu_s}_2^2ds  + o(1).
\end{align*}
We can write the last term as
\begin{align*}
    & \int_0^t \int_0^\infty \psi^2 \Big(\rho_s \left(\sigma_s \partial_x T_\varepsilon^{E,\kappa} \nu_s - \partial_x \sigma_s \mathcal{H}_{s,\varepsilon} + \bar{\mathcal{E}}_{s,\varepsilon}^\sigma \right)+ o_{sq}^\psi(1) \Big)^2 dxds \\
    &= \int_0^t \int_0^\infty \psi^2\rho_s^2 \left(\sigma_s \partial_x T_\varepsilon^{E,\kappa} \nu_s - \partial_x \sigma_s \mathcal{H}_{s,\varepsilon}
    + \bar{\mathcal{E}}_{s,\varepsilon}^\sigma \right)^2 dxds \\
    &\quad+  \int_0^t \int_0^\infty \psi^2 \rho_s \left(\sigma_s \partial_x T_\varepsilon^{E,\kappa} \nu_s - \partial_x \sigma_s \mathcal{H}_{s,\varepsilon} + \bar{\mathcal{E}}_{s,\varepsilon}^\sigma \right) o_{sq}^\psi(1)dxds + o(1) \\
    &\leq (1+\eta)  \int_0^t \int_0^\infty \psi^2 \rho_s^2 \sigma_s^2 \lvert \partial_x T_\varepsilon^{E,\kappa} \nu_s \rvert^2 dxds\\
    &\quad-(1+\eta)  \int_0^t \int_0^\infty \psi^2 \rho_s^2 \sigma_s \partial_x T_\varepsilon^{E,\kappa} \nu_s \left(\partial_x \sigma_s \mathcal{H}_{s,\varepsilon}-\bar{\mathcal{E}}^\sigma_{s,\varepsilon} \right)dxds \\
    &\quad+ (1+\eta)  \int_0^t \int_0^\infty \psi^2 \rho^2 \left( \partial_x \sigma \mathcal{H}_{s,\varepsilon}-\bar{\mathcal{E}}^\sigma_{s,\varepsilon} \right)^2dxds + o(1).
\end{align*}
We obtain 
\begin{align*}
    &\int_0^t \int_0^\infty \psi^2 \Big(\rho_s \left(\sigma_s \partial_x T_\varepsilon^{E,\kappa} \nu_s - \partial_x \sigma_s \mathcal{H}_{s,\varepsilon} + \bar{\mathcal{E}}_{s,\varepsilon}^\sigma \right)+ o_{sq}^\psi(1) \Big)^2 dxds\\
    &\leq (1+2\eta+\eta^2) \int_0^t \int_0^\infty \psi^2 \rho_s^2 \sigma_s^2 \lvert \partial_x T_\varepsilon^{E,\kappa} \nu_s \rvert^2 dxds + c_\eta \int_0^t \norm{\psi T_{2\varepsilon}^{E,\kappa}\nu_s}_2^2ds + o(1).    
\end{align*}
Putting everything back together and choosing $\eta>0$ small enough such that
\begin{equation*}
    \sigma_s^2 - \rho_s^2(1+2\eta+\eta^2) \sigma^2_s-\eta \mu_s^2 -\eta \geq c_0
\end{equation*}
for some $c_0>0$ yields  
\begin{align*}
    \norm{ \psi T_\varepsilon^{E,\kappa} \nu_t}_2^2 
    \leq& \norm{T_\varepsilon^{E,\kappa}\nu_0}_2^2 + c_\eta \int_0^t \norm{T_\varepsilon^{E,\kappa}\nu_s}_2^2ds + c_\eta \int_0^t \norm{T_{2\varepsilon}^{E,\kappa}\nu_s}_2^2ds + o(1)\\
    &- 2 \int_0^t \int_0^\infty \psi^2 T_\varepsilon^{E,\kappa} \nu_s \rho_s \left(\sigma_s \partial_x T_\varepsilon^{E,\kappa} \nu_s - \partial_x \sigma_t \mathcal{H}_{t,\varepsilon} + \bar{\mathcal{E}}_{t,\varepsilon}^\sigma \right)dxdW_s^0 \\
    &+ 4 \int_0^t \int_0^\infty \psi^2 T_\varepsilon^{E,\kappa} \nu_s \partial_x \nu_s(\rho_s \sigma_s p_\varepsilon(x+\cdot))dxdW_s^0\\
    &- 4 \int_0^t \int_0^\infty \psi^2 T_\varepsilon^{E,\kappa} \nu_s \partial_x \nu_s(\rho_s \sigma_s g_\varepsilon^{E,\kappa}(x,\cdot))dxdW_s^0.
\end{align*}
Next, we take supremum over $t$ and then expectation. Using \cite[Lemma 8.5]{HamblyLedger2017}  to estimate the stochastic integrals, which can easily be adapted to this case, we get 
\begin{align*}
    \mathbb{E} \left[ \sup_{s \in [0,t]} \norm{\psi T_\varepsilon^{E,\kappa}\nu_s }_2^2 \right] \leq& c^\prime \mathbb{E} \left[ \norm{T_\varepsilon^{E,\kappa}\nu_0}_2^2 \right] + c^\prime \mathbb{E} \left[ \int_0^t \norm{T_\varepsilon^{E,\kappa}\nu_s}_2^2ds \right]\\
    &+ c^\prime \mathbb{E} \left[ \int_0^t \norm{T_{2\varepsilon}^{E,\kappa}\nu_s}_2^2ds \right]+ o(1)
\end{align*}
with a constant $c^\prime>0$. Taking the limit $\varepsilon \to 0$ and using Proposition \ref{IntegratedSmoothedL2}, Assumption \ref{AssumptionInitial} and Lemma \ref{L2contraction} we obtain 
\begin{equation}
    \lim_{\varepsilon \to 0} \mathbb{E}  \left[\sup_{s \in [0,t]}\norm{T_\varepsilon^{E,\kappa} \nu_s}_{L^2(U)}^2 \right] \leq \lim_{\varepsilon \to 0} \mathbb{E}  \left[\sup_{s \in [0,t]} \norm{\psi T_\varepsilon^{E,\kappa} \nu_s}_2^2 \right] < \infty.
\end{equation}
Thus, we have the existence of an $L^2$-density in the set $U$ using the same arguments as in the proof of Proposition \ref{PropIntegratedDensityRegularity} and since $U$ was arbitrary the result follows.
\end{proof}

\subsection{Ruling out a Dirac mass at the origin}

As a special case of Theorem \ref{probrep} in the next section, we can characterize the unique solution $(\nu, W^0)$ to the SPDE \eqref{MeasureSPDE} as the conditional law
\begin{equation}\label{eq:diffusion_char}
\nu_t(dx) = \mathbb{P} \left( X_t \in  dx,\, t < \tau \right \vert \mathcal{F}_t^{W^0})
\end{equation}
of a given reflected diffusion
\begin{equation}\label{eq:reflected_SDE}
dX_t = \mu(t,X_t)dt + \sigma(t,X_t) \rho(t) W_t^0 + \sigma(t,X_t) (1-\rho(t)^2)^{\frac{1}{2}}dW_t+dL_t,
\end{equation}
where $W$ is a Brownian motion independent of $W^0$ and $\tau$ is the elastic killing time defined by a standard exponential random variable independent of $X$. This point of view makes it easy to deduce from \cite{Burdzy2003} that there cannot be an atom at the origin.

\begin{proof}[Proof of Proposition \ref{prop:Burdzy}]
Express the unique solution $\nu$ as \eqref{eq:diffusion_char}. We can then apply the scale transformation $\zeta$ defined in Lemma \ref{Scale} to \eqref{eq:reflected_SDE} in order to obtain
\begin{equation*}
    \nu_t(A) \leq \mathbb{P} \left(Z_0 + \rho W_t^0 + \sqrt{1-\rho^2} W_t^1 + \int_0^t \tilde{\mu}_s ds + L_t \in \zeta(t,A) \big\vert \mathcal{F}_t^{W^0}  \right),
\end{equation*}
for any Borel set $A$ in $[0,\infty)$. By Lemma \ref{Scale}, we have that 
\begin{equation} \label{relfectionIneq1}
    Z_0 + \rho W_t^0 + \sqrt{1-\rho^2}W_t^1 + \int_0^t \tilde{\mu}_s ds \geq Z_0 + \rho W_t^0 + \sqrt{1-\rho^2}W_t^1 - Ct  
\end{equation}
for all $t \in [0,T]$ almost surely with a constant $C>0$. Note that the processes in \eqref{relfectionIneq1} only differ by the drift term. Moreover the drift $-Ct$ is always decreasing by more than the drift $\int_0^t \tilde{\mu}_sds$.
Thus we have
\begin{align*}
    &\left\{Z_0 + \rho W_t^0 + \sqrt{1-\rho^2}W_t^1 + \int_0^t \tilde{\mu}_s ds +L_t =0  \right\}\\
    &= \left\{Z_0 + \rho W_t^0 + \sqrt{1-\rho^2}W_t^1 + \int_0^t \tilde{\mu}_s ds = \inf_{u\leq t} \left\{Z_0 + \rho W_u^0 + \sqrt{1-\rho^2}W_u^1 + \int_0^u \tilde{\mu}_s ds  \right\} \right\} \\
    &\subseteq \left\{Z_0 + \rho W_t^0 + \sqrt{1-\rho^2}W_t^1 -Ct = \inf_{u\leq t} \left\{Z_0 + \rho W_u^0 + \sqrt{1-\rho^2}W_u^1 -uC  \right\} \right\} \\
    &=\left\{Z_0 + \rho W_t^0 + \sqrt{1-\rho^2}W_t^1 -Ct +\tilde{L}_t =0  \right\},
\end{align*}
where $\tilde{L}$ is the local time of the process in the last line.
As a result we obtain

\begin{equation*}
    0 \leq \nu_t(\{0\}) \leq \mathbb{P} \left(Z_0 + \rho W_t^0 + \sqrt{1-\rho^2}W_t^1 - Ct + \tilde{L}_t =0  \Big\vert \mathcal{F}_t^{W^0} \right),
\end{equation*}
for all $t \in [0,T]$ almost surely. With $f(t) :=Ct$, we obviously have that
$f(t)/\sqrt{t}$ is non-decreasing, so $f$ is not in the upper class of Brownian motion by \cite[p.144]{Knight1981}. Thus, \cite[Theorem 2.2 and Theorem 2.5]{Burdzy2003} gives that the probability on the right-hand side is zero for all $t \in [0,T]$ almost surely, and hence $\nu_t$ cannot have an atom at zero for any $t$. Combining this with Proposition \ref{DensityInterior} we can conclude that we have a density everywhere on the positive half-line.
\end{proof}
While the previous theorem establishes existence of a density for the entire positive half-line, we do not get the square integrability all the way up to the boundary, beyond a set of times of full measure as guaranteed by Theorem \ref{thm:L2-density}.
It is unclear to us how to extend this result to include all $t \in [0,T]$ and we actually conjecture it to be false for all $t \in [0,T]$. To see where the problem lies we consider the simpler case of a reflecting boundary and set the drift $\mu$ to zero and the volatility $\sigma$ to one. The solution to the SPDE then takes the form
\begin{equation*}
    \nu_t(dx) = \mathbb{P} \left(\rho W_t^0 + \sqrt{1-\rho^2} W_t^1+L_t \in dx\, \vert\, \mathcal{F}^{W^0}_t \right).
\end{equation*}
We can interpret this conditional distribution as the distribution of the scaled Brownian motion $\sqrt{1-\rho^2} W^1$ reflected on a given fixed Brownian path $-\rho W^0$. This is a particular instance of a Brownian motion reflected in a time-dependent domain, which, together with the related heat equation, have been studied in a series of papers \cite{ Burdzy2004b,Burdzy2003, Burdzy2004a, Burdzy2002}. In \cite{Burdzy2003} the authors show that, at the boundary of such domains, both singularities of the density as well as atoms are possible. While they establish that there are no atoms when the boundary is a path of a Brownian motion, in \cite{Burdzy2002} it is shown that in this case the density exhibits blow-up at the boundary on a dense subset of times $t$. These effects arise when the boundary sharply moves into the domain and the diffusion cannot transport the heat away fast enough. The authors show that, for any boundary that locally moves in sharper than Brownian motion (i.e.,~is in the upper class of Brownian motion), an atom occurs. This leads us to suspect that the blow-up is too strong to allow for square integrability of the density at the origin.  

\section{Extension to nonlinear interactions in the drift}
In this section we discuss some of the results we can obtain and some of the difficulties associated to a natural extension of the particle system which allows for interaction through a dependence on the empirical measure $\nu^N_t$ in the drift coefficient $\mu$. The particle system then becomes
\begin{align} \label{particle2 SDE}
\begin{split}
    X_t^{i,N} &= X_0^i + \int_0^t \mu(s,X_s^{i,N},\nu_s^N)ds + \int_0^t \sigma(s,X_s^{i,N}) \rho(s) dW_s^0 \\
    &+ \int_0^t \sigma(s,X_s^{i,N})(1- \rho(s)^2)^{\frac{1}{2}}dW_s^i+L_t^{i,N},
\end{split}
\end{align}
together with the elastic stopping times $\tau^{i,N}$. In this case we need additional assumptions on the regularity of $\mu$ with respect to the measure variable.

\begin{assumption}[Regularity in the Measure Variable] \label{AssumptionMeanField}
The drift coefficient $\mu$ is Lipschitz continuous in the measure variable $\nu$ on the space of sub-probability measures $\mathbf{M}_{\leq 1}(\R)$ with respect to the bounded Lipschitz distance $d_0$ which is given by
\begin{equation}
    d_0(\nu, \Tilde{\nu}) \coloneqq \sup \{\lvert \langle \psi, \nu - \Tilde{\nu} \rangle \rvert : \norm{\psi}_{Lip} \leq 1, \norm{\psi}_{\infty}  \leq 1  \},
\end{equation}
i.e., there exists a fixed constant $c>0$ such that $\lvert \mu(t,x,\nu)- \mu(t,x,\Tilde{\nu}) \rvert \leq c d_0 (\nu, \tilde{\nu})$.
\end{assumption}

Through the empirical measure in the drift coefficient the elastic stopping times $\tau^{i,N}$ become part of the equations for the particles. Thus, well-posedness of the particle system needs additional arguments compared to the system  \eqref{particle SDE}. The existence of such a particle system can be established by adjusting the arguments in  \cite[Theorem 2.3]{SojmarkWorkingPaper22} to this case. We need to add a common noise in the filtrations defined therein. Another difference is that in our case the interactions are through the empirical measure not the loss function. An important step in the construction in \cite[Section 3]{SojmarkWorkingPaper22} is, for a given particle $i$, to consider the system without particle $i$ given that $t < \tau^{i,N}$. In our case such a system will, in contrast to the system in \cite{SojmarkWorkingPaper22}, not be independent of the particle $i$ because of the empirical-measure interaction. However, given the condition $ t <\tau^{i,N}$ we can consider this system with the empirical measure
\begin{equation*}
    \nu_t^{N,-i}(dx) = \frac{1}{N} \left(\sum_{j=1, j\neq i}^N \delta_{X_t^{j,N}}(dx)\mathbbm{1}_{t< \tau^{j}} + \delta_{X_t^{i,N}}(dx) \right).
\end{equation*}
Thus, the system without particle $i$ depends on $W^i$ and $X^i_0$ but not $\chi^i$. Considering this fact, the construction in \cite{SojmarkWorkingPaper22} carries through in our case. Using Assumptions \ref{AssumptionInitial},\ref{AssumptionSpace} and \ref{AssumptionMeanField} a standard argument relying on the Lipschitz continuity and Gronwall's lemma can be used to establish uniqueness.

Following the same arguments as in Section \ref{existenceSection} we obtain the existence result in this case. The methods we used in the proofs in Section \ref{existenceSection} can easily be generalised to allow for interaction in the drift. In particular the martingale approach can be extended in a straight forward way to $\mu$ depending also on $\xi$ in \eqref{martingaleComponents}. 
\begin{theorem} \label{NonLinExistenceThm}
Let $(\nu^N,W^0)$ be a sequence with $\nu^N$ being empirical-measure processes corresponding to the particle system (\ref{particle2 SDE}) that satisfy Assumptions \ref{AssumptionInitial}, \ref{AssumptionSpace} and \ref{AssumptionMeanField}. Then $(\nu^N,W^0)$ possesses converging subsequences in $(D_{\mathcal{S}^{\prime}},M1) \times (C_{\R},\norm{}_{\infty})$. Moreover, for any limit point $(\nu,W^0)$ the process $\nu$ is in the class $\Lambda$ and $(\nu,W^0)$ satisfies the SPDE
\begin{align} \label{MeasureSPDE2}
    \begin{split}
        \langle \nu_t,\phi \rangle = \langle \nu_0, \phi \rangle &+ \int_0^t  \langle \nu_s,\mu(s,\cdot,\nu_s)  \phi^\prime \rangle ds + \frac{1}{2} \int_0^t \langle \nu_s, \sigma^2(s,\cdot)  \phi^{\prime \prime} \rangle ds\\
        &+ \int_0^t \langle \nu_s, \rho(s) \sigma(s,\cdot)  \phi^\prime \rangle dW_s^0
    \end{split}
\end{align}
for all times $t \in [0,T]$ and all test functions $\phi \in \mathcal{C}^{E,\kappa}_0(\mathbb{R})$, where
\[
   \mathcal{C}^{E,\kappa}_0(\mathbb{R}) = \{\phi \in \mathcal{S}: \partial_x \phi(0) = \kappa \phi(0)  \}.
\]
\end{theorem}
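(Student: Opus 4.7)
The strategy follows the blueprint laid out for Theorem \ref{existenceThm} in Sections \ref{probEstSection}--\ref{regularitySection}. The key new ingredient is the continuity afforded by Assumption \ref{AssumptionMeanField}, which is precisely what permits passage to the limit in the nonlinear drift term.

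First I would verify that all probabilistic estimates of Section \ref{probEstSection} remain valid uniformly in $N$. The scale transformation of Lemma \ref{Scale} and the Girsanov change of measure of Lemma \ref{removeDrift} rely only on uniform boundedness of the drift, and since $(s,x) \mapsto \mu(s,x,\nu^N_s)$ is uniformly bounded by $C_{\sigma,\mu}$ along any realisation, the transformed drift $\tilde{\mu}$ remains uniformly bounded. Consequently, the spatial concentration (Lemma \ref{FiniteSpatial}), the boundary estimate (Proposition \ref{boundaryEst}), and the tail estimate (Proposition \ref{tail}) go through with unchanged proofs. Applying It\^o's formula to $\phi(X^{i,N}_{t\wedge \tau^{i,N}})$ for $\phi \in \mathcal{C}^{E,\kappa}_0(\R)$ then yields an evolution equation identical to \eqref{eq:eefornu} apart from the drift term $\int_0^t \langle \nu^N_s, \mu(s,\cdot,\nu^N_s) \phi' \rangle ds$, with the martingale and local-time remainders vanishing exactly as in Proposition \ref{finiteEvolEq}.

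Next, tightness of $(\nu^N, W^0)$ follows verbatim from Proposition \ref{TightnessN}, as the moment estimate $\mathbb{E}[|X^{i,N}_t - X^{i,N}_s|^4] = O(|t-s|^2)$ requires only coefficient boundedness and continuity of local time, and the vanishing of empirical-loss jumps is provided by Proposition \ref{EmpLossProp}. Having extracted a weakly convergent subsequence $(\nu^{N_k}, W^0) \to (\nu^*, W^0)$, Lemma \ref{BrownianFiltration} still yields a filtration in which $\nu^*$ is adapted and $W^0$ is a Brownian motion, and Proposition \ref{subProbValued} ensures sub-probability values. The verification that $\nu^*$ belongs to the class $\Lambda$ then repeats the argument of Proposition \ref{limitRegularity} word for word.

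The crux is identifying limit points as solutions of \eqref{MeasureSPDE2}. I would redefine the martingale components $M^\phi, S^\phi, C^\phi$ in \eqref{martingaleComponents} by replacing $\mu(s,\cdot)$ with $\mu(s,\cdot,\xi_s)$. For each $\phi \in \mathcal{C}^{E,\kappa}_0(\R)$, one must show along the subsequence and for $t$ in a cocountable subset of $[0,T]$ that
\[
\int_0^t \langle \nu^{N_k}_s, \mu(s,\cdot,\nu^{N_k}_s) \phi'(\cdot) \rangle \, ds \;\longrightarrow\; \int_0^t \langle \nu^*_s, \mu(s,\cdot,\nu^*_s) \phi'(\cdot) \rangle \, ds
\]
in law. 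Invoking a Skorokhod representation, I may assume M1-convergence almost surely, which yields $\nu^{N_k}_s \to \nu^*_s$ weakly as sub-probability measures at every $s$ in the (random) cocountable set of continuity points of $\nu^*$. Since for sub-probability measures the bounded Lipschitz distance metrises weak convergence, Assumption \ref{AssumptionMeanField} gives
\[
\sup_{x \geq 0} |\mu(s,x,\nu^{N_k}_s) - \mu(s,x,\nu^*_s)| \leq c\, d_0(\nu^{N_k}_s, \nu^*_s) \to 0,
\]
and combining this with the weak convergence of $\nu^{N_k}_s$ against the bounded continuous function $x \mapsto \mu(s,x,\nu^*_s) \phi'(x)$ gives pointwise (in $s$) convergence of the integrand, while uniform boundedness justifies dominated convergence in $s$. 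The martingale properties of $M^\phi, S^\phi, C^\phi$ at the limit then transfer as in Proposition \ref{convergenceToEv}. The hard part will be precisely this interchange: the interaction is \emph{quadratic} in $\nu^N$ in the sense that the measure enters twice, so one must carefully couple the upgrade from M1 convergence to pointwise weak convergence with the Lipschitz estimate in $d_0$ to close the argument.
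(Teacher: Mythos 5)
Your proposal is correct and follows essentially the same route as the paper: the paper likewise reuses the probabilistic estimates and tightness arguments of Sections \ref{probEstSection}--\ref{regularitySection} unchanged (noting they depend only on uniform boundedness of the drift) and extends the martingale approach of \eqref{martingaleComponents} by letting $\mu$ depend on $\xi_s$, with the passage to the limit in the nonlinear term handled exactly via the Lipschitz estimate in $d_0$ combined with weak convergence at continuity points, as in Proposition \ref{CasesIntegralConvergence}. Your write-up in fact supplies more detail on this last step than the paper does.
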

Note that in this case the limit equation becomes a nonlinear SPDE with a nonlinearity in the drift term $\mu$. A convenient way of obtaining regularity for solutions to this nonlinear SPDE is to use the following probabilistic representation.

\begin{theorem} \label{probrep}
Let $(\nu,W^0)$ be a solution to the SPDE \eqref{MeasureSPDE2} with $\nu$ in the class $\Lambda$.  Then, for all $t \in [0,T]$ we have the following representation of $\nu_t$
\begin{equation}
    \nu_t = \mathbb{P} \left( X_t \in  \cdot, t < \tau \right \vert \mathcal{F}_t^{\nu,W^0}),
\end{equation}
where $X$ is a reflecting particle with dynamics given by
\begin{equation*}
    dX_t = \mu(t,X_t,\nu_t)dt + \sigma(t,X_t) \rho_t W_t^0 + \sigma(t,X_t) (1-\rho_t^2)^{\frac{1}{2}}dW_t+dL_t,
\end{equation*}
with $W^0,W$ independent Brownian motions and $X_0$ being distributed according to $\nu_0$. The stopping time  $\tau$ is the elastic stopping time associated with $X$ and $(\mathcal{F}^{\nu,W^0}_t)$ the filtration generated by $\nu$ and $W^0$.
\end{theorem}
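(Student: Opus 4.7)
The plan is a conditional McKean--Vlasov argument that reduces the nonlinear SPDE to the linear theory of Section \ref{Sec:LinUniq}. Given $(\nu,W^0)$ a solution of \eqref{MeasureSPDE2} with $\nu\in\Lambda$, I would enlarge the probability space to carry an independent Brownian motion $W$, an independent $\mathrm{Exp}(\kappa)$ random variable $\chi$, and an initial value $X_0\sim\nu_0$ independent of $(\nu,W^0,W,\chi)$. Treating the random but $\mathcal{F}^{\nu,W^0}$-adapted drift $\tilde{\mu}(t,x;\omega):=\mu(t,x,\nu_t(\omega))$ as a fixed coefficient---which inherits uniform bounds and $C^2$-regularity in $x$ from Assumptions \ref{AssumptionSpace} and \ref{AssumptionMeanField}---I would solve the reflected SDE
\[
dX_t=\tilde{\mu}(t,X_t)dt+\sigma(t,X_t)\rho_tdW^0_t+\sigma(t,X_t)\sqrt{1-\rho_t^2}\,dW_t+dL_t,
\]
and define $\tau:=\inf\{t>0:L_t>\chi\}$. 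The candidate representation is the conditional law $\tilde{\nu}_t(\cdot):=\mathbb{P}(X_t\in\cdot,\,t<\tau\mid\mathcal{F}^{\nu,W^0}_t)$; the goal is to show $\tilde{\nu}=\nu$.

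The core step is to verify that $\tilde{\nu}$ satisfies the \emph{linear} SPDE driven by $W^0$ with drift $\tilde{\mu}$. For $\phi\in\mathcal{C}^{E,\kappa}_0(\R)$, I would apply It{\^o}'s formula to $\phi(X_{t\wedge\tau})$, mimicking the computation of Proposition \ref{finiteEvolEq}: the local-time and killing contributions collapse via $\partial_x\phi(0)=\kappa\phi(0)$ together with the fact that $\kappa L_{t\wedge\tau}-\mathbbm{1}_{\tau\leq t}$ has zero conditional mean given $\sigma(X)$ (hence also given $\mathcal{F}^{\nu,W^0}_t$). Conditioning on $\mathcal{F}^{\nu,W^0}_t$, the $dW$-integral vanishes by independence of $W$, $X_0$ and $\chi$ from the conditioning $\sigma$-field, while the $dW^0$-integral passes through by a conditional stochastic Fubini since $W^0$ is $\mathcal{F}^{\nu,W^0}$-adapted. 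To then apply Theorem \ref{uniqueness} I also need $\tilde{\nu}\in\Lambda$. Since $X$ is a single reflected diffusion with bounded drift and uniformly non-degenerate bounded volatility, the scale transformation and Girsanov change of measure of Lemmas \ref{Scale} and \ref{removeDrift} apply directly, yielding the exponential tail and spatial concentration estimates for the (conditional) law $\tilde{\nu}_t$ via the tower property. For the boundary-decay estimate I would introduce a conditionally independent copy $X^\prime$ of $X$ given $W^0$, driven by a fresh independent Brownian motion with its own elastic clock, and rewrite $\mathbb{E}[\tilde{\nu}_t(0,\varepsilon)^2]=\mathbb{P}(0<X_t,X^\prime_t<\varepsilon,\,t<\tau\wedge\tau^\prime)$, to which Proposition \ref{boundaryEst} applies verbatim.

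With both $\nu$ and $\tilde{\nu}$ in $\Lambda$ and solving the same linear SPDE driven by $W^0$ with the common drift $\tilde{\mu}$, I would invoke the proof of Theorem \ref{uniqueness}: that proof only uses the uniform bounds on $\mu,\sigma$ and $\rho^2<1$, never the fact that the coefficients are deterministic rather than adapted, so it extends with no change to this random-coefficient setting. This forces $\tilde{\nu}_t=\nu_t$ for all $t\in[0,T]$ almost surely, which is the desired representation. The main obstacle I expect is the conditional-expectation step inside the It{\^o} expansion: pulling $\mathbb{E}[\,\cdot\mid\mathcal{F}^{\nu,W^0}_t]$ through the $dW^0$-stochastic integral requires the standard but delicate conditional stochastic Fubini for a measure-valued integrand, and the transfer of the class-$\Lambda$ decay properties from the unconditional law of $X$ to the conditional law $\tilde{\nu}$ relies on the two-particle trick being set up so that $X$ and $X^\prime$ remain conditionally independent given $W^0$ without contaminating the filtration to which $\tilde{\nu}$ is adapted.
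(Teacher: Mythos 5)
Your proposal is correct and follows essentially the same route as the paper: freeze the measure-dependent drift as $\tilde{\mu}(t,x)=\mu(t,x,\nu_t)$, build the reflected diffusion with its elastic clock, show its conditional law given $\mathcal{F}^{\nu,W^0}$ solves the resulting linear SPDE and lies in the class $\Lambda$, and then conclude by the linear uniqueness of Theorem \ref{uniqueness}. If anything you are more explicit than the paper on two points it leaves implicit, namely the verification of the class-$\Lambda$ boundary decay via a conditionally independent copy and the observation that the uniqueness argument tolerates random adapted coefficients.
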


\begin{proof} 
The proof is based on a decoupling argument and the linear uniqueness result. Let $\nu$ be a weak solution to the nonlinear SPDE  which exists by Theorem \ref{NonLinExistenceThm}.  We then define a stochastic process $X$ as the solution to the SDE
\begin{equation}
    dX_t = \mu_t^{\nu}dt +  \sigma(t,X_t) (\rho_t dW_t^0 + (1-\rho_t^2)^{1/2}dW_t^1)+ dL_t, 
\end{equation}
where the drift term $\mu^{\nu}$ is given by
\begin{equation*}
    \mu_t^{\nu} \coloneqq \mu(t,X_t,\nu_t)
\end{equation*}
and $W^0,W^1$ are independent Brownian motions. The associated elastic stopping time is $\tau^{lin} \coloneqq \inf\{t>0:  L_t > \xi  \}$ for an independent exponential random variable $\xi \sim \mbox{Exp}(\kappa)$. Define a process $(\nu_t^{lin})$ taking values in the space of sub-probability measures as
\begin{equation}
    \langle \nu_t^{lin}, \phi \rangle \coloneqq \mathbb{E} \left[\phi(X_t) \mathbbm{1}_{t < \tau^{lin}}  \vert \mathcal{F}_t^{\nu,W^0} \right]
\end{equation}
for $\phi \in \mathcal{S}$. Take a test function $\phi \in \mathcal{C}^{E,\kappa}_0(\mathbb{R})$, apply It{\^o}'s formula, rearrange and take conditional expectation to find that $\nu_t^{lin}$ solves the linear SPDE
\begin{align} \label{linearSPDE}
\begin{split}
  d \langle \nu_t^{lin}, \phi \rangle = &\langle \nu_t^{lin}, \mu_t^{\nu}  \phi^\prime \rangle dt + \frac{1}{2} \langle \nu_t^{lin}, \sigma^2_t  \phi^{\prime \prime} \rangle dt \\
  &+ \langle \nu_t^{lin}, \rho_t \sigma_t  \phi^\prime \rangle dW_t^0
  \end{split}
\end{align}
for all $\phi \in \mathcal{C}^{E,\kappa}_0(\mathbb{R})$. By the same arguments as for the particle system in Section \ref{probEstSection}, one easily checks that the solution is in the class $\Lambda$. Since $\nu$ is a solution to the nonlinear SPDE, it also solves the SPDE (\ref{linearSPDE}). By Therorem \ref{uniqueness} solutions to (\ref{linearSPDE}) are unique in the class $\Lambda$, so we have $\nu_t =\nu_t^{lin}$. This yields the desired result.
\end{proof}
Based on this representation the same regularity results as in Theorem \ref{thm:L2-density} and Proposition \ref{prop:Burdzy} can be shown using the methods in Section \ref{DensityRegularitySection}. The question of uniqueness of solutions to the nonlinear SPDE is more subtle. In the case of an absorbing boundary, uniform $L^2$ regularity is used to deal with the nonlinearity by employing an argument involving a sequence of stopping times (see \cite{HamblyLedger2017, HamblySojmark2019}). As discussed in Section \ref{DensityRegularitySection} we do not expect such regularity to hold in the elastic case. This makes the extension to the nonlinear case appear significantly more difficult than in the absorbing case and an entirely different approach may be needed.

\section{Absorption and Reflection as Limiting Cases}

Intuitively, an elastic boundary condition acts as a mixture of an absorbing boundary and a reflecting boundary -- with the positive parameter $\kappa$ controlling the balance of the two. This can be seen from the elastic condition
\begin{equation}
    \partial_x \phi(0) = \kappa \phi (0),
\end{equation}
if we take the limits $\kappa \to 0$ and $\kappa \to \infty$. When taking $\kappa$ to $\infty$ we obtain the absorbing boundary condition $\phi(0) =0 $ and taking $\kappa$ to $0$ yields the reflecting boundary condition $\partial_x \phi(0) = 0$. This shows that the absorbing and reflecting cases can be obtained as limits of the elastic case. Moreover, we observe the same when analysing the elastic stopping times 
\begin{equation*}
    \tau = \inf \{t>0 : L_t > \chi^{\kappa} \}, \quad \chi^{\kappa} \sim \mbox{Exp}(\kappa).
\end{equation*}
In the limit $\kappa \to \infty$ we get $\chi^{\kappa} \to 0$ and the stopping time $\tau$ becomes the first time the process $X$ hits the boundary at 0, $i.e.$ the absorbing stopping time. For $\kappa \to 0$ we have the convergence $\chi^{\kappa}  \to \infty$ and the elastic killing time $\tau^i$ converges to $\infty$. As a result the limit process becomes purely reflecting. The goal of this section is to show that we also have this convergence at the level of the measure-valued processes as we let $\kappa$ go to $0$ or $\infty$. We will write $\nu^{\kappa}$ for the solution to the elastic SPDE to emphasis the dependence on the elastic-killing parameter $\kappa$. 

SPDEs of the type (\ref{MeasureSPDE}) with an absorbing boundary are studied in \cite{HamblyLedger2017} and in \cite{HamblySojmark2019}. Existence and uniqueness of solutions is proved in these articles. The absorbing nature of the boundary is defined through the space of test functions
\begin{equation}
\mathcal{C}^{A}_0(\mathbb{R}) \coloneqq \{\phi \in \mathcal{S}(\R) : \phi(0)=0 \}.
\end{equation}
By using the space of test functions
\begin{equation}
    \mathcal{C}^{R}_0(\mathbb{R}) \coloneqq \{ \phi \in \mathcal{S}(\R) : \partial_x \phi (0) = 0 \}
\end{equation}
in the formulation of the SPDE \eqref{MeasureSPDE} we have instead a reflecting boundary. With a few adjustments to the proof for the elastic case, we can also prove Theorem \ref{UniquenessReflecting}.

\begin{proof} [Proof of Theorem \ref{UniquenessReflecting}]
The existence and necessary regularity of the weak solution follows through a particle approximation as done in Section \ref{probEstSection} and Section \ref{existenceSection}. Note that the required estimates for the reflecting system were established alongside the estimates for the elastic system in Section \ref{probEstSection}. 

The kernel smoothing method with the reflecting heat kernel $G_\varepsilon^R$ can be employed to prove uniqueness. In the reflecting case the weak formulation of the boundary simplifies because the process takes values in the space of probability measures. Denoting by $T_\varepsilon^R$ the convolution operator with $G_\varepsilon^R$ and by $\nu^0$ a solution to the reflecting SPDE we see that for the anti-derivative at the boundary,  an application of Fubini's theorem and the fact that $G^R_\varepsilon$ integrates to 1 gives $\partial_x^{-1} T_{\varepsilon}^R \nu^0_t(0)=-\nu^0_t[0,\infty)$. The rest of the proof now follows with the same arguments as in the elastic case.
\end{proof}

Now that we have existence and uniqueness for the three boundary cases,
we can prove Theorem \ref{CaseConvergenceThm} on the weak convergence of the solution to the elastic equation to the absorbing and reflecting counter parts in the following way
\begin{itemize}
    \item verify tightness of $(\nu^{\kappa})$ to establish existence of limit points 
    \item show that limit points solve the absorbing or reflecting SPDEs, respectively, 
    \item deduce weak convergence from uniqueness of solutions.
\end{itemize}

The first step concerns tightness. As in the case of the particle approximations we consider tightness on the space of c{\`a}dl{\`a}g processes with values in the space of tempered distributions.
\begin{lemma} \label{tightnessKappa}
Let $(\nu^{\kappa},W^0)$ be a sequence of solutions with $\nu^\kappa$ in the class $\Lambda$ to the SPDE \eqref{MeasureSPDE} with elastic boundary condition for parameter $\kappa>0$. Then, the sequence $(\nu^{\kappa},W^0)$ is tight on $(D_{\mathcal{S}^{\prime}},M1) \times (C_{\R}, \norm{\cdot}_{\infty})$ for both cases, $\kappa \to \infty$ and $\kappa \to 0$ .
\end{lemma}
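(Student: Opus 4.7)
The plan is to adapt the strategy of Proposition \ref{TightnessN}, replacing the particle-system estimates by arguments that exploit the probabilistic representation from Theorem \ref{probrep}. That representation realises each $\nu^\kappa$ through a single reflected diffusion $X$: one has $\nu_t^\kappa = \mathbb{P}(X_t \in \cdot,\, t<\tau^\kappa \mid \mathcal{F}_t^{W^0})$, where the underlying $X$ and its local time $L$ at the origin do not depend on $\kappa$, and only the elastic killing time $\tau^\kappa = \inf\{u: L_u > \chi^\kappa\}$, with $\chi^\kappa \sim \mathrm{Exp}(\kappa)$ independent of $X$, does. Realising the whole family on a common probability space via this shared $X$ is precisely what makes the subsequent bounds uniform in $\kappa$.

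By \cite[Theorem 3.2]{Ledger2016}, it suffices to show tightness of $\langle \nu^\kappa, \phi\rangle$ on $(D_{\R}, \mathrm{M1})$ for every $\phi \in \mathcal{S}$. Mirroring \eqref{TightnessDecomp}, I would use $X_{\tau^\kappa} = 0$ to decompose
\begin{equation*}
\langle \nu_t^\kappa, \phi\rangle = \langle \hat\nu_t^\kappa, \phi\rangle - \phi(0) \mathcal{L}_t^\kappa,
\end{equation*}
with $\hat\nu_t^\kappa := \mathbb{P}(X_{t \wedge \tau^\kappa} \in \cdot \mid \mathcal{F}_t^{W^0})$ and $\mathcal{L}_t^\kappa := \mathbb{P}(\tau^\kappa \leq t \mid \mathcal{F}_t^{W^0})$. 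The loss $\mathcal{L}^\kappa$ is non-decreasing, so it contributes nothing to the M1 modulus of continuity. For the continuous piece $\hat\nu^\kappa$, I would establish the $O(|t-s|^2)$ fourth-moment increment bound analogous to \eqref{FirstTightness} by applying It\^o's formula to $\phi(X_{t\wedge\tau^\kappa})$ and controlling each term (the drift, the stochastic integral, and the local-time piece $\phi'(0)(L_{t\wedge\tau^\kappa}-L_{s\wedge\tau^\kappa})$) exactly as in Proposition \ref{TightnessN}; the resulting constants depend only on $\phi$ and on moments of $X$ and $L$, and are therefore uniform in $\kappa$.

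Next, the analogue of \eqref{SecondTightness} must be obtained uniformly in $\kappa$. By Markov's inequality and monotonicity of $\mathcal{L}^\kappa$, this reduces to showing that $\mathbb{P}(\tau^\kappa \leq \delta)$ and $\mathbb{P}(T-\delta < \tau^\kappa \leq T)$ both vanish as $\delta \to 0$, uniformly in $\kappa$. The key step exploits the independence of $\chi^\kappa$ and $X$ via the identity
\begin{equation*}
\mathbb{P}(s < \tau^\kappa \leq t) = \mathbb{E}\bigl[e^{-\kappa L_s}\bigl(1 - e^{-\kappa (L_t - L_s)}\bigr)\bigr],
\end{equation*}
whose integrand vanishes on the $\kappa$-independent event $\{L_t = L_s\}$. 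Consequently $\mathbb{P}(s < \tau^\kappa \leq t) \leq \mathbb{P}(\inf_{u \in [s,t]} X_u = 0)$, and this $\kappa$-free upper bound tends to $0$ as $|t-s| \to 0$ by continuity of $X$ together with the facts that $X_0 > 0$ almost surely (Assumption \ref{AssumptionInitial}) and $\mathbb{P}(X_T = 0) = 0$ (the reflected diffusion has a marginal density).

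The main obstacle is exactly this uniform-in-$\kappa$ control of the loss at the endpoints: the killing mechanism itself varies with $\kappa$, so a direct appeal to continuity of local time would only yield $\kappa$-dependent estimates. The critical observation unlocking uniformity is that the increments of $\mathcal{L}^\kappa$ are controlled pathwise by the $\kappa$-independent event that $X$ visits the boundary during the time window, so the $\mathrm{Exp}(\kappa)$ parameter can be dispensed with entirely in the bound.
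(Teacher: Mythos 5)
Your proposal is correct and shares the paper's overall skeleton (the probabilistic representation from Theorem \ref{probrep}, the decomposition $\langle \nu^\kappa_t,\phi\rangle = \langle \hat\nu^\kappa_t,\phi\rangle - \phi(0)\mathcal{L}^\kappa_t$, Ledger's M1 criterion, and the fourth-moment increment bound carried over verbatim from Proposition \ref{TightnessN} since $X$ does not depend on $\kappa$), but your treatment of the loss increments is genuinely different from the paper's. The paper establishes \eqref{KappaLoss} by writing $\mathbb{P}(\tau^\kappa\leq t)=\mathbb{P}(\inf_{s\leq t}Y^\kappa_s\leq 0)$ for the shifted process $Y^\kappa=\tilde Y+\chi^\kappa$, splitting on whether $Y^\kappa_t\geq\varepsilon$, lies in $(0,\varepsilon)$, or is nonpositive, and then invoking the scale transformation, Girsanov, a reflection-type estimate $\Phi(-c_1\delta^{-1/2}(\varepsilon-\delta))$, and the choice $\varepsilon(\delta)=\delta^{1/2}\log(1/\delta)$; this only yields the iterated limit $\lim_{\delta\to0}\limsup_{\kappa}$. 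Your argument instead integrates out the exponential clock exactly, $\mathbb{P}(t<\tau^\kappa\leq t+\delta)=\mathbb{E}\bigl[e^{-\kappa L_t}\bigl(1-e^{-\kappa(L_{t+\delta}-L_t)}\bigr)\bigr]\leq\mathbb{P}(L_{t+\delta}>L_t)\leq\mathbb{P}\bigl(\inf_{u\in[t,t+\delta]}X_u=0\bigr)$, which is a bound free of $\kappa$ and tends to $\mathbb{P}(X_t=0)=0$ by continuity from above. This is more elementary and buys genuine uniformity in $\kappa$ rather than just the iterated limit. The only points you should make explicit are (a) that $\mathbb{E}[\mathcal{L}^\kappa_{t+\delta}-\mathcal{L}^\kappa_t]=\mathbb{P}(t<\tau^\kappa\leq t+\delta)$ via the tower property before applying Markov, and (b) that $\mathbb{P}(X_t=0)=0$ for $t>0$, which is not automatic but follows immediately from Lemma \ref{removeDrift} (the law of $X_t$ is dominated by a power of the reflected-Brownian marginal, which has no atom at the origin), with the case $t=0$ covered by Assumption \ref{AssumptionInitial}.
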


\begin{proof}
We follow the same approach as in Proposition \ref{TightnessN}. 
By the probabilistic representation from Proposition \ref{probrep} we know that for $\phi \in \mathcal{S}$ we have

\begin{equation}
    \langle \nu_t^{\kappa},\phi \rangle = \mathbb{E} \left[ \phi(X_t) \mathbbm{1}_{t < \tau^{\kappa}} \vert \mathcal{F}_t^{\nu^\kappa,W^0} \right].
\end{equation}
We can then again apply the decomposition of \cite[Proposition 4.2]{Ledger2016} and obtain that the first condition we need to show is
\begin{align} \label{KappaTightnessCondition1}
\begin{split}
    \mathbb{E} \left[ \lvert \langle \hat{\nu}_t^{\kappa},\phi \rangle - \langle \hat{\nu}_s^{\kappa},\phi \rangle \rvert^4 \right]
    =  O(\lvert t-s \rvert^2), \qquad{\text{as } \lvert t-s \rvert \to 0}
\end{split}
\end{align}
for 
\begin{equation*}
    \langle \hat{\nu}_t^\kappa, \phi \rangle = \mathbb{E} \left[\phi(X_{t \wedge \tau^\kappa}) \vert \mathcal{F}_t^{\nu^\kappa,W^0} \right].
\end{equation*}
The definition of $X$ is independent of $\kappa$ and we can conclude condition \eqref{KappaTightnessCondition1} with the same methods as in Proposition~\ref{TightnessN}.
To conclude tightness using the approach of Proposition~\ref{TightnessN} we need to verify that for all $t \in [0,T]$ and $\eta>0$
\begin{equation} \label{KappaLoss}
    \lim_{\delta \to 0} \limsup_{\kappa \to 0} \mathbb{P} (\mathcal{L}^\kappa_{t + \delta} - \mathcal{L}^\kappa_{t} \geq \eta)=0 
\end{equation}
where
\begin{equation*}
    \mathcal{L}_t^{\kappa} = \mathbb{P} \left(\tau^\kappa \leq t \vert \mathcal{F}_t^{\nu^\kappa,W^0}  \right)
\end{equation*}
to obtain tightness in the case $\kappa \to 0$ and the same condition with $\limsup_{\kappa \to \infty}$ for the other case. Markov's inequality yields
\begin{equation*}
    \mathbb{P} (\mathcal{L}^\kappa_{t + \delta} -  \mathcal{L}^\kappa_{t} \geq \eta) \leq \eta^{-1} \mathbb{P}(t<\tau^\kappa \leq t+ \delta) 
\end{equation*}
We consider a process $\tilde{Y}$ given by
\begin{equation*}
    \tilde{Y}_t^\kappa = X_0^\kappa + \int_0^t \mu_sds + \int_0^t \sigma_s dW_s
\end{equation*}
where $dW_s =  \rho_s dW_s^0 + \sqrt{1-\rho^2_s}dW_s^1$. The reflecting particle $X$ is then given by
\begin{equation*}
    X_t = \tilde{Y}_t + L_t
\end{equation*}
using the Skorokhod problem. Moreover, we set
\begin{equation*}
    Y_t^\kappa = \tilde{Y}_t^\kappa + \chi^\kappa.
\end{equation*}
We can apply this together with the definition of the stopping time $\tau^\kappa$ to obtain
\begin{align*}
    \mathbb{P} \left(\tau^\kappa \leq t \right) &= \mathbb{P} \left(L_t  \geq \chi^\kappa \right) = \mathbb{P} \left(\inf_{s \in [0,t]}\tilde{Y}_s \leq - \chi^\kappa \right) = \mathbb{P} \left(\inf_{s \in [0,t]}Y_s^\kappa \leq 0 \right).
\end{align*}
Taking $\varepsilon>0$ we have
\begin{align*}
    \mathbb{P} \left(t < \tau^\kappa \leq t + \delta \right) = & \mathbb{P} \left(t< \tau^\kappa \leq t + \delta, Y_t^\kappa \geq \varepsilon \right) + \mathbb{P} \left(t< \tau^\kappa \leq t+ \delta , Y_t^\kappa \in (0,\varepsilon) \right) \\
    &+ \mathbb{P} \left(t < \tau^\kappa \leq t + \delta, Y_t^\kappa \leq 0 \right).
\end{align*}
The last term is zero because $Y_t^\kappa \leq 0$ implies $\tau^\kappa \leq t$. This gives the estimate
\begin{equation} \label{CDFOfTau}
    \mathbb{P} \left(t < \tau^\kappa \leq t+\delta \right) \leq \mathbb{P} \left( t< \tau^\kappa \leq t + \delta, Y_t^\kappa \geq \varepsilon \right) + \mathbb{P} \left(Y_t^\kappa \in (0,\varepsilon) \right).
\end{equation}
To deal with the second term on the right-hand side we apply the scale transformation and change of measure discussed in Section \ref{probEstSection} to transform $Y^\kappa$ into a Brownian motion. The only dependence on $\kappa$ is then left in the initial condition. However, note that $\chi^\kappa$ converges to zero or infinity almost surely if we take the limit $\kappa \to \infty$ or $\kappa \to 0$, respectively. Since the function $\zeta$ is continuous in the $x$ variable and the measure $\mathbb{Q}$ equivalent to $\mathbb{P}$ we also get $\mathbb{Q}$-almost sure convergences to zero or infinity for $\zeta(t,\chi^\kappa)$. Using the properties of Brownian motion we then have
\begin{equation*}
    \limsup_{\kappa \to \infty} \mathbb{P} (Y_t^\kappa \in (0,\varepsilon)) \leq C_q \mathbb{Q} \left(B_t \in (0,C_{\mu,\sigma} \varepsilon) \right)^{1/q} = o(1), \quad \text{as $\varepsilon \to 0$},
\end{equation*}
for $q>1$, a constant $C_q>0$ depending on $q$ and $B$ a Brownian motion under $\mathbb{Q}$. In the case $\kappa \to 0$ we get
\begin{equation*}
    \limsup_{\kappa \to 0} \mathbb{P} (Y_t^\kappa \in (0,\varepsilon))=0. 
\end{equation*}
We follow the structure in the proof of \cite[Proposition 4.7 ]{HamblyLedger2017} to estimate the other term in \eqref{CDFOfTau}. We get
\begin{align*}
    \mathbb{P} \left(t < \tau^\kappa \leq t+ \delta, Y_t^\kappa \geq \varepsilon \right) & \leq \mathbb{P} \left(\inf_{s \in [t,t+\delta]} Y_s^\kappa \leq 0, Y_t^\kappa \geq \varepsilon \right) \\
    &\leq \mathbb{P} \left(\inf_{s \in [t,t+\delta]} (Y_s^\kappa-Y_t^\kappa) \leq - \varepsilon \right).
\end{align*}
Using the scale transformation $\zeta$ from Section \ref{probEstSection} we define
$U_s^\kappa \coloneqq \zeta(t+s,Y_{t+s}^\kappa)-\zeta(t+s,Y_t^\kappa)$. The dynamics of $U^\kappa$ are given by
\begin{equation*}
    dU_s^\kappa = u_s^\kappa ds + dW_s
\end{equation*}
where $u^{\kappa}$ is a uniformly bounded drift coefficient. This means we can find a constant $c_1>0$ such that
\begin{align*}
    \mathbb{P} \left(\inf_{s \in [t,t+\delta]} (Y_s^\kappa-Y_t^\kappa) \leq - \varepsilon \right)  &= \mathbb{P} \left(\inf_{s \in [0,\delta]} U_s^\kappa \leq - \kappa \right) \\
    &\leq \mathbb{P} \left(\inf_{s \in [0,\delta]}W_s \leq - c_1 (\varepsilon-\delta) \right) = \Phi(-c_1 \delta^{-1/2}(\varepsilon-\delta))
\end{align*}
where $\Phi$ is the normal c.d.f. We need that $\varepsilon-\delta>0$ and the convergences
\begin{equation*}
    \delta^{-1/2}(\varepsilon(\delta)-\delta) \to \infty, \quad \varepsilon(\delta) \to 0.
\end{equation*}
Choosing $\varepsilon(\delta) = \delta^{1/2}\log(1/\delta)$ gives the desired convergences and we get (\ref{KappaLoss}). This completes the tightness proof.
\end{proof}

To show weak convergence, it is now sufficient to prove that the limit points solve the respective SPDE for the reflecting and absorbing cases. To do this, we again rely on the martingale approach described in Section \ref{convergenceSection}. Consider the maps $M^{\phi}(\nu^{\kappa}), S^{\phi}(\nu^{\kappa})$ and $ C^{\phi}(\nu^{\kappa})$ defined in (\ref{martingaleComponents}). We need to show that, as we take the limit $\kappa \to 0/ \infty$, we obtain the corresponding quantities for the reflecting and absorbing cases, respectively. 

\begin{proposition} \label{CasesIntegralConvergence}
 Fix $t \leq T$ and define $\Psi_h: \{\xi \in D_{\mathcal{S}^{\prime}}: \xi_s \in \mathbf{M}_{\leq 1}(\R) \} \times \mathcal{S} \to \R$ by
\begin{equation}
    \Psi_h(\xi, \phi) \coloneqq \int_0^t \langle \xi_s, h(s,\cdot)\phi \rangle ds
\end{equation}
with $h$ denoting a placeholder for $\mu, \sigma^2$ or $\rho \sigma$. If  $\nu^{\kappa} \to \nu^*$ weakly on $(D_{\mathcal{S}^{\prime}},M1)$ and $\phi^\kappa \to \phi^*$ in $\mathcal{S}$, then we also have the convergence $\Psi^{\kappa}_h \to \Psi^*_h$ weakly on $\R$.
\end{proposition}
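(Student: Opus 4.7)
The plan is to exploit bilinearity of $\Psi_h$ in its test-function argument, together with the continuity of integration in the M1 topology (Whitt's theorem), and handle the time-dependence of $h$ by a piecewise-constant-in-time approximation. First, by bilinearity,
\begin{equation*}
\Psi_h(\nu^\kappa,\phi^\kappa) - \Psi_h(\nu^*,\phi^*) = \int_0^t \langle \nu_s^\kappa, h(s,\cdot)(\phi^\kappa-\phi^*)\rangle\, ds + \int_0^t \langle \nu_s^\kappa-\nu_s^*, h(s,\cdot)\phi^*\rangle\, ds.
\end{equation*}
Since each $\nu_s^\kappa$ is a sub-probability measure and $h$ is uniformly bounded by $C_{\mu,\sigma}$ (Assumption~\ref{AssumptionSpace}), the first term is deterministically dominated by $t\, C_{\mu,\sigma}\,\|\phi^\kappa-\phi^*\|_\infty$, which tends to $0$ because convergence in the Schwartz topology implies uniform convergence. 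The task thus reduces to handling the second integral.

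For that second integral I would invoke the Skorokhod representation theorem---applicable since $(D_{\mathcal{S}^\prime},\mathrm{M1})$ is Polish---to realise $\nu^\kappa\to\nu^*$ almost surely in M1, and then argue pathwise, since almost sure convergence implies convergence in distribution. For any \emph{fixed} Schwartz function $\psi$, M1-convergence in $D_{\mathcal{S}^\prime}$ entails M1-convergence of the real-valued paths $\langle\nu_\cdot^\kappa,\psi\rangle\to\langle\nu_\cdot^*,\psi\rangle$ (this is how the M1 topology on $D_{\mathcal{S}^\prime}$ is defined in \cite{Ledger2016}), after which \cite[Theorem 11.5.1]{whitt2002}---the same continuity-of-integration tool already exploited in Proposition~\ref{limitRegularity}---gives the convergence $\int_0^t\langle\nu_s^\kappa,\psi\rangle\, ds\to\int_0^t\langle\nu_s^*,\psi\rangle\, ds$ pathwise.

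To pass from this fixed test function to the $s$-dependent integrand $h(s,\cdot)\phi^*$, I would, for each $\eta>0$, partition $[0,t]$ into subintervals $0=s_0<\cdots<s_n=t$ of mesh at most $\eta$ and approximate
\begin{equation*}
\int_0^t \langle \nu_s^\kappa, h(s,\cdot)\phi^*\rangle\, ds \;\approx\; \sum_{k=0}^{n-1}\int_{s_k}^{s_{k+1}} \langle \nu_s^\kappa, h(s_k,\cdot)\phi^*\rangle\, ds,
\end{equation*}
with an error bounded uniformly in $\kappa$ by $t\,\|\phi^*\|_\infty\sup_{|s-s'|\leq\eta}\|h(s,\cdot)-h(s',\cdot)\|_\infty$. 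For $h=\sigma^2$ or $h=\rho\sigma$, this error vanishes as $\eta\downarrow 0$ by the $C^1$-in-$t$ regularity of $\sigma$ (Assumption~\ref{AssumptionSpace}(iii)); for $h=\mu$, the same vanishing is obtained by approximating $\mu(\cdot,x)$ by a simple function in $L^1([0,t])$ and using bounded convergence, since the integrands are uniformly bounded. On each piece the test function $h(s_k,\cdot)\phi^*$ is a single Schwartz function, so each summand converges pathwise by the previous step, and a standard diagonal argument (send $\kappa\to\infty$ or $\kappa\to 0$ first and then $\eta\downarrow 0$) completes the almost sure, hence in-distribution, convergence.

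The main obstacle is precisely this $s$-dependence in $h$: were it absent, the statement would reduce to a direct application of Whitt's theorem exactly as used in Proposition~\ref{limitRegularity}. Overcoming it requires the time-partition argument above, which is the only nontrivial step; everything else follows from bilinearity, the uniform sub-probability bound on $\nu_s^\kappa$, and existing topological continuity results for the M1 topology.
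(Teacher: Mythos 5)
Your proposal is correct and takes essentially the same route as the paper: the identical bilinear decomposition, with the $\phi^\kappa-\phi^*$ term killed deterministically by boundedness of $h$ and the sub-probability property, and the $\nu^\kappa-\nu^*$ term handled via Skorokhod representation; the only divergence is that you spell out the latter step (fixed-test-function convergence via \cite[Theorem 11.5.1]{whitt2002} plus a piecewise-constant-in-time approximation of $h$) where the paper simply defers to \cite[Proposition 4.7]{HamblySojmark2019}. One small caveat in that added detail: for $h=\rho\sigma$ the coefficient $\rho$ carries no assumed time-regularity (and Assumption \ref{AssumptionSpace}(iii) does not give a modulus of continuity for $t\mapsto\sigma(t,\cdot)$ that is uniform in $x$), so the sup-norm time-modulus bound does not apply there and you must fall back on the $L^1$-in-time simple-function approximation you already describe for $h=\mu$.
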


\begin{proof}
We show the result for the case $\kappa \to 0$. The other case follows using the same line of argument.
We fix a bounded Lipschitz function $f \in Lip(\R)$. By Lemma \ref{tightnessKappa} and the resulting relative compactness we can find a weakly convergent subsequence, again denoted by $(\nu^{\kappa}, \phi^{\kappa})$. By Skorokhod representation we can assume almost sure convergence and use the triangle inequality and the Lipschitz continuity of $f$ to obtain
\begin{align*}
    \left\lvert \mathbb{E} \left[f(\Psi^{\kappa}_h) \right] - \mathbb{E} \left[f(\Psi^{*}_h) \right] \right\rvert \leq & C \Bigg( \mathbb{E} \left[ \left\lvert \int_0^t \langle \nu_s^{\kappa} - \nu_s^*, h(s,\cdot) \phi^* \rangle ds  \right\rvert \right] \\
    & + \mathbb{E} \left[\left\lvert \int_0^t \langle \nu_s^{\kappa}, h(s,\cdot) \phi^* - h(s,\cdot) \phi^{\kappa} \rangle ds \right\rvert \right] \Bigg) \\
    \eqqcolon & C \left( \mathbb{E} [ I_1^{\kappa} ] + \mathbb{E} [ I_2^{\kappa}] \right)
\end{align*}
for some $C>0$. We can treat the first term as in \cite[Proposition 4.7]{HamblySojmark2019}. 
For the second term involving $I_2^{\kappa}$, using that $h$ is bounded and $\nu_s^{\kappa}$ is a sub-probability measure gives

\begin{equation*}
    I_2^{\kappa} \leq c \norm{\phi^*-\phi^{\kappa}}_{\infty} \to 0 \quad \text{as } \kappa \to 0.
\end{equation*}
This completes the proof for convergence to the reflecting case. The same arguments show the result for the case $\kappa \to \infty$.
\end{proof}

\begin{proof}[Proof of Theorem \ref{CaseConvergenceThm}]
Combining the martingale approach we used in Section \ref{convergenceSection} with the results of Lemma \ref{tightnessKappa} and Proposition \ref{CasesIntegralConvergence} it follows that $\nu^{\kappa}$ converges to a solution to the reflecting or absorbing SPDE when taking the limit $\kappa \to 0$ and $\kappa \to \infty$, respectively. The necessary regularity results follow in the same way as in Section \ref{regularitySection}. We can then conclude weak convergence from the uniqueness result of Theorem \ref{UniquenessReflecting} for the reflecting case and \cite[Theorem 2.6]{HamblySojmark2019}  for the absorbing case.
\end{proof}

\appendix
\section{Appendix: Properties of the Elastic Heat Kernel}

We begin by introducing several heat kernels, corresponding to a standard Brownian motion either on the whole real line or on the positive half-line with either reflection or elastic killing imposed at the origin.

Recall first that the Gaussian heat kernel on $\mathbb{R}$ with variance $\varepsilon$, which we denote by $p_{\varepsilon}$, is given by
\begin{equation} \label{GaussianHeatKernel}
    p_{\varepsilon}(x) = \frac{1}{\sqrt{2 \pi \varepsilon}} e^{- \frac{x^2}{2\varepsilon}}.
\end{equation}
Starting from this, the reflecting (or Neumann) heat kernel $G_{\varepsilon}^R$ on $[0,\infty)$ can then be defined as
\begin{equation} \label{reflectingKernel}
    G_{\varepsilon}^R(x,y) \coloneqq p_{\varepsilon}(x-y) + p_{\varepsilon}(x+y).
\end{equation}
Finally, the elastic (or Robin) heat kernel $G_{\varepsilon}^{E,\kappa}$ is given by \cite[Appendix 1, Sect.~10]{BorodinSalminen2002}.
\begin{align} \label{ElasticKernel}
\begin{split}
    G_{\varepsilon}^{E,\kappa}(x,y) =& \frac{1}{\sqrt{2 \pi \varepsilon}} \left[e^{-(x-y)^2/2 \varepsilon} + e^{-(x+y)^2/2 \varepsilon} \right]\\
    &-\kappa \exp \left(\kappa(x+y) + \frac{\kappa^2 \varepsilon}{2} \right) \left(1- \mbox{Erf}\left(\frac{x+y+\kappa \varepsilon}{\sqrt{2 \varepsilon}} \right) \right) \\
    =& G_{\varepsilon}^R(x,y) - g_{\varepsilon}^{E,\kappa}(x,y)
\end{split}    
\end{align}
where $\mbox{Erf}$ is the error function
\begin{equation*}
    \mbox{Erf}(x) = \frac{2}{\sqrt{\pi}} \int_0^x e^{-z^2}dz.
\end{equation*}

\begin{lemma}
The two maps $ x \mapsto G_{\varepsilon}^{E,\kappa}(x,y)$, for $y \geq 0$, and $y \mapsto G_{\varepsilon}^{E,\kappa}(x,y)$, for $x \geq 0$, satisfy the elastic boundary condition and belong to $\mathcal{C}^{E,\kappa}_0(\mathbb{R})$ for all $\varepsilon >0$.
\end{lemma}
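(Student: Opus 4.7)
My plan is to exploit the symmetry $G_\varepsilon^{E,\kappa}(x,y)=G_\varepsilon^{E,\kappa}(y,x)$, which is evident from the formula in \eqref{ElasticKernel}, so that it suffices to treat $\phi(x):=G_\varepsilon^{E,\kappa}(x,y)$ for fixed $y\geq 0$. Following the decomposition $G_\varepsilon^{E,\kappa}=G_\varepsilon^R-g_\varepsilon^{E,\kappa}$, I will check Schwartz regularity and the elastic boundary condition separately for the two pieces.

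For Schwartz regularity, $G_\varepsilon^R$ is a sum of translated Gaussians and is plainly in $\mathcal{S}(\R)$. For $g_\varepsilon^{E,\kappa}$, the factor $e^{\kappa(x+y)}$ decays exponentially as $x\to-\infty$ while $1-\mathrm{Erf}$ remains bounded there, and as $x\to+\infty$ the classical asymptotic $1-\mathrm{Erf}(z)\sim e^{-z^2}/(z\sqrt{\pi})$ combines with the prefactor $e^{\kappa(x+y)+\kappa^2\varepsilon/2}$ to produce Gaussian decay after the exponent cancellation
\begin{equation*}
\kappa(x+y)+\tfrac{\kappa^2\varepsilon}{2}-\tfrac{(x+y+\kappa\varepsilon)^2}{2\varepsilon}=-\tfrac{(x+y)^2}{2\varepsilon}.
\end{equation*}
Rapid decay of every derivative then follows from the recursion derived in the next step, propagated inductively.

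The core computation is the identity
\begin{equation*}
\partial_x g_\varepsilon^{E,\kappa}(x,y)=\kappa g_\varepsilon^{E,\kappa}(x,y)-2\kappa p_\varepsilon(x+y),
\end{equation*}
which I will derive from $(1-\mathrm{Erf})'(z)=-\tfrac{2}{\sqrt{\pi}}e^{-z^2}$ together with the same exponent collapse. Iterating expresses $\partial_x^n g_\varepsilon^{E,\kappa}$ as a finite linear combination of $g_\varepsilon^{E,\kappa}$ and derivatives $\partial_x^k p_\varepsilon(x+y)$; all summands are Schwartz, so $\phi\in\mathcal{S}(\R)$. At $x=0$, antisymmetry $p_\varepsilon'(-y)=-p_\varepsilon'(y)$ gives $\partial_x G_\varepsilon^R(0,y)=0$ while $G_\varepsilon^R(0,y)=2p_\varepsilon(y)$, and the identity yields $\partial_x g_\varepsilon^{E,\kappa}(0,y)=\kappa g_\varepsilon^{E,\kappa}(0,y)-2\kappa p_\varepsilon(y)$. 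Subtracting,
\begin{equation*}
\partial_x\phi(0)=-\partial_x g_\varepsilon^{E,\kappa}(0,y)=2\kappa p_\varepsilon(y)-\kappa g_\varepsilon^{E,\kappa}(0,y)=\kappa\bigl(G_\varepsilon^R(0,y)-g_\varepsilon^{E,\kappa}(0,y)\bigr)=\kappa\phi(0),
\end{equation*}
which is the elastic boundary condition; by the $(x,y)$-symmetry of $G_\varepsilon^{E,\kappa}$, the analogous identity holds in the $y$-variable with no extra work.

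The only real obstacle is the algebraic bookkeeping needed to verify the exponent collapse to $-(x+y)^2/(2\varepsilon)$; once this cancellation is isolated it simultaneously powers the Schwartz tail estimates as $x\to+\infty$, the recursive formula for the higher derivatives of $g_\varepsilon^{E,\kappa}$, and the boundary identity itself.
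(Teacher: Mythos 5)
Your proposal is correct and takes essentially the same approach as the paper, whose proof is simply the statement that the lemma ``follows from direct computation of the derivatives''; you have carried out that computation explicitly, and the key identity $\partial_x g_\varepsilon^{E,\kappa}(x,y)=\kappa g_\varepsilon^{E,\kappa}(x,y)-2\kappa p_\varepsilon(x+y)$, the exponent collapse to $-(x+y)^2/(2\varepsilon)$, and the boundary verification all check out.
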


\begin{proof}
This follows from direct computation of the derivatives.
\end{proof}

We need a relation between derivatives in $x$ and $y$ of $G_{\varepsilon}^{E,\kappa}$ to be able to switch derivatives from one variable to the other. This is the content of the following lemma.

\begin{lemma} \label{derivativeSwitch}
The derivatives of the elastic heat kernel $G_{\varepsilon}^{E,\kappa}$ satisfy
\begin{enumerate}
    \item $\partial_y G_{\varepsilon}^{E,\kappa}(x,y) = - \partial_x G_{\varepsilon}^{E,\kappa}(x,y) + 2 \partial_x p_{\varepsilon}(x+y) - 2 \partial_x g_{\varepsilon}^{E,\kappa}(x,y)$
    \item  $\partial_{yy}G_{\varepsilon}^{E,\kappa}(x,y) = \partial_{xx}G_{\varepsilon}^{E,\kappa}(x,y)$.
\end{enumerate}
\end{lemma}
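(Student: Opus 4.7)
The plan is to exploit the fact that each of the three building blocks of $G_\varepsilon^{E,\kappa}(x,y) = p_\varepsilon(x-y) + p_\varepsilon(x+y) - g_\varepsilon^{E,\kappa}(x,y)$ is a function of either $x-y$ or $x+y$ alone, so that derivatives in $y$ can be related to derivatives in $x$ by a simple chain rule sign.

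For part (i), I would first observe that $p_\varepsilon(x-y)$ is a function of $x-y$, which gives $\partial_y p_\varepsilon(x-y) = -\partial_x p_\varepsilon(x-y)$. Similarly, $p_\varepsilon(x+y)$ is a function of $x+y$, which yields $\partial_y p_\varepsilon(x+y) = +\partial_x p_\varepsilon(x+y)$. Finally, from the explicit formula \eqref{ElasticKernel}, $g_\varepsilon^{E,\kappa}(x,y)$ depends on $(x,y)$ only through the combination $x+y$, so $\partial_y g_\varepsilon^{E,\kappa}(x,y) = \partial_x g_\varepsilon^{E,\kappa}(x,y)$. Summing the contributions gives
\[
\partial_y G_\varepsilon^{E,\kappa}(x,y) = -\partial_x p_\varepsilon(x-y) + \partial_x p_\varepsilon(x+y) - \partial_x g_\varepsilon^{E,\kappa}(x,y),
\]
while
\[
-\partial_x G_\varepsilon^{E,\kappa}(x,y) = -\partial_x p_\varepsilon(x-y) - \partial_x p_\varepsilon(x+y) + \partial_x g_\varepsilon^{E,\kappa}(x,y).
\]
Subtracting, the discrepancy is exactly $2\partial_x p_\varepsilon(x+y) - 2\partial_x g_\varepsilon^{E,\kappa}(x,y)$, as claimed.

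For part (ii), the same observations apply but now the signs square away: for any smooth $f$, $\partial_{yy} f(x\pm y) = \partial_{xx}f(x\pm y)$, regardless of the sign. Applying this to each of $p_\varepsilon(x-y)$, $p_\varepsilon(x+y)$, and $g_\varepsilon^{E,\kappa}(x,y)$ (which depends only on $x+y$) yields $\partial_{yy}G_\varepsilon^{E,\kappa} = \partial_{xx}G_\varepsilon^{E,\kappa}$ termwise.

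There is no real obstacle here: the entire content of the lemma is the structural observation that $g_\varepsilon^{E,\kappa}$ depends on $(x,y)$ only through $x+y$. Once this is highlighted directly from the closed form \eqref{ElasticKernel}, both identities are immediate chain-rule computations, and I would simply state them without grinding through the derivative of the error function term.
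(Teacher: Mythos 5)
Your proposal is correct and follows essentially the same route as the paper: both reduce the identities to the observation that $p_\varepsilon(x-y)$, $p_\varepsilon(x+y)$, and $g_\varepsilon^{E,\kappa}$ (which depends only on $x+y$, hence is symmetric in $x$ and $y$) convert $y$-derivatives to $x$-derivatives up to sign via the chain rule. The bookkeeping of the signs matches the paper's computation, so nothing further is needed.
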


\begin{proof}
(i) We have
\begin{align*}
    \partial_y G_{\varepsilon}^{E,\kappa}(x,y) &= \partial_y G_{\varepsilon}^R(x,y) - \partial_y g_{\varepsilon}^{E,\kappa}(x,y) \\
    \partial_x G_{\varepsilon}^{E,\kappa}(x,y) &= \partial_x G_{\varepsilon}^R(x,y) - \partial_x g_{\varepsilon}^{E,\kappa}(x,y).
\end{align*}
Note that by symmetry of $g_{\varepsilon}^{E,\kappa}$ in $x$ and $y$ we have $\partial_y g_{\varepsilon}^{E,\kappa}(x,y) = \partial_x g_{\varepsilon}^{E,\kappa}(x,y)$. Recall that the reflecting heat kernel is given by
\begin{equation*}
    G_{\varepsilon}^R(x,y) = p_{\varepsilon}(x-y) + p_{\varepsilon}(x+y).
\end{equation*}
Thus, we get the relation
\begin{equation*}
    \partial_y G_{\varepsilon}^R(x,y) = - \partial_x G_{\varepsilon}^R(x,y) + 2 \partial_x p_{\varepsilon}(x+y).
\end{equation*}
Considering the two  equations for the elastic heat kernel we have
\begin{equation*}
    \partial_y G_{\varepsilon}^{E,\kappa}(x,y) = - \partial_x G_{\varepsilon}^{E,\kappa}(x,y) + 2 \partial_x p_{\varepsilon}(x+y) - 2 \partial_x g_{\varepsilon}^{E,\kappa}(x,y).
\end{equation*}\\
(ii) First of all, it is easy to see that
\begin{equation*}
    \partial_{yy} G_{\varepsilon}^R(x,y) = \partial_{xx} G_{\varepsilon}^R(x,y).
\end{equation*}
The same relation holds for $g_{\varepsilon}^{E,\kappa}$ by symmetry in $x$ and $y$. This gives the result.
\end{proof}
We also need a bound on the elastic correction term $g_\varepsilon^{E,\kappa}$.
\begin{lemma}\label{ElasticCorrectionTermEstimate}
For all $x,y \geq 0$ and all $\varepsilon>0$ we have
  $  g^{E,\kappa}_{\varepsilon}(x,y) \leq \kappa \exp (-(x+y)^2/2\varepsilon)$.
\end{lemma}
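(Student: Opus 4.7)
The plan is to unfold the definition of $g_\varepsilon^{E,\kappa}$ from \eqref{ElasticKernel}, write the complementary error function as a Gaussian tail integral, and apply a one-line sub-Gaussian estimate. Setting $s := x+y \geq 0$, we have
\begin{equation*}
g_\varepsilon^{E,\kappa}(x,y) = \kappa \exp\!\Bigl(\kappa s + \tfrac{\kappa^2\varepsilon}{2}\Bigr)\Bigl(1-\mathrm{Erf}\Bigl(\tfrac{s+\kappa\varepsilon}{\sqrt{2\varepsilon}}\Bigr)\Bigr),
\end{equation*}
and the target bound $\kappa e^{-s^2/(2\varepsilon)}$ suggests that the algebra should work out exactly, since with $z := (s+\kappa\varepsilon)/\sqrt{2\varepsilon}$ one computes
\begin{equation*}
z^2 = \frac{s^2}{2\varepsilon} + \kappa s + \frac{\kappa^2\varepsilon}{2},
\end{equation*}
so the exponential prefactor cancels perfectly against $e^{-z^2}$. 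Thus the whole statement is reduced to the inequality $1-\mathrm{Erf}(z) \leq e^{-z^2}$ for $z \geq 0$.

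To prove that inequality I would use the change of variables $u = z + v$:
\begin{equation*}
1-\mathrm{Erf}(z) = \frac{2}{\sqrt{\pi}}\int_z^\infty e^{-u^2}\,du = \frac{2}{\sqrt{\pi}}\int_0^\infty e^{-(z+v)^2}\,dv = \frac{2\,e^{-z^2}}{\sqrt{\pi}}\int_0^\infty e^{-2zv}\,e^{-v^2}\,dv.
\end{equation*}
For $z \geq 0$ the factor $e^{-2zv} \leq 1$ on $[0,\infty)$, so the remaining integral is bounded by $\int_0^\infty e^{-v^2}\,dv = \sqrt{\pi}/2$, yielding $1-\mathrm{Erf}(z) \leq e^{-z^2}$.

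Substituting back and using the cancellation noted above gives the claim. There is no real obstacle here: the only non-trivial step is noticing that the constants in the exponent of the elastic correction term are precisely tuned to match $z^2$, so that a simple Gaussian-tail bound suffices.
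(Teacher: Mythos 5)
Your proposal is correct and follows essentially the same route as the paper: both write $1-\mathrm{Erf}$ as a Gaussian tail integral, exploit the exact cancellation of the prefactor $e^{\kappa(x+y)+\kappa^2\varepsilon/2}$ against the square in the exponent, and discard the non-negative cross term ($e^{-2zv}\leq 1$, respectively $e^{-\kappa z}\leq 1$ in the paper). Your packaging via the scalar inequality $1-\mathrm{Erf}(z)\leq e^{-z^2}$ for $z\geq 0$ is a clean and equivalent way to organise the same computation.
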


\begin{proof}
By definition of the error function and a change of variable, we can estimate
\begin{align*}
    g_{\varepsilon}^{E,\kappa}(x,y) &= \kappa e^{\kappa (x+y) + \frac{\kappa^2 \varepsilon}{2}} \left(1-\mbox{Erf}\left(\frac{x+y+\kappa \varepsilon}{\sqrt{2 \varepsilon}} \right) \right) \\
    &= 2\kappa e^{\kappa (x+y) + \frac{\kappa^2 \varepsilon}{2}} \frac{1}{\sqrt{2 \pi \varepsilon}} \int_0^{\infty} e^{-\frac{(z+x+y+\kappa\varepsilon)^2}{2\varepsilon}}dz\\
    &=2 \kappa e^{\kappa(x+y)+\frac{\kappa^2 \varepsilon}{2}} \frac{1}{\sqrt{2 \pi \varepsilon}} \int_0^{\infty} e^{-\frac{(z+x+y)^2}{2\varepsilon}} e^{-\frac{\kappa^2\varepsilon}{2}} e^{-\kappa(z+x+y)}dz \\
    &\leq \kappa \frac{2}{\sqrt{2 \pi \varepsilon}} \int_0^{\infty} e^{-\frac{z^2}{2\varepsilon}} e^{-\frac{(x+y)^2}{2\varepsilon}} dz \leq \kappa e^{-\frac{(x+y)^2}{2\varepsilon}}.
\end{align*}
\end{proof}

\section{Appendix: Elastic Heat Kernel Mollification in $H^{-1}$} \label{AppendixH1}

Let $\zeta$ be a finite signed measure and let $p_{\varepsilon}$ denote the Gaussian heat kernel. The convolution $\zeta * p_{\varepsilon}$ given by
\begin{equation}
    \zeta * p_{\varepsilon}(x) = \int_{\R} p_{\varepsilon}(x-y) \zeta(dy)
\end{equation}
is a function in $C^{\infty}(\R)$ and the sequence $(\zeta * p_{\varepsilon})$ converges weakly to $\zeta$ as $\varepsilon \to 0$, i.e. for every bounded and continuous function $\phi: \R \to \R$ we have
\begin{equation*}
    \int_{\R} \phi(x) (\zeta * p_{\varepsilon})(x) dx \to \int_{\R} \phi(x) \zeta(dx), \quad \text{as } \varepsilon \to 0.
\end{equation*}
This smooth approximation is the basis of the kernel smoothing method but instead of the Gaussian heat kernel we use the elastic heat kernel $G_{\varepsilon}^{E,\kappa}$. We define the smoothed measure $T^{E,\kappa}_{\varepsilon} \zeta$ as
\begin{equation} \label{convDefn}
    T^{E,\kappa}_{\varepsilon} \zeta(x) = \langle \zeta, G_{\varepsilon}^{E,\kappa}(x,\cdot) \rangle = \int_0^{\infty} G_{\varepsilon}^{E,\kappa}(x,y) \zeta(dy).
\end{equation}
The reason for this choice is that it is an element of the test function space $\mathcal{C}^{E,\kappa}_0(\mathbb{R})$. This means we can use it in the weak formulation of the SPDE. Furthermore, it is known in an explicit form which enables explicit computations such as the switching of derivatives in Lemma \ref{derivativeSwitch}. The  weak convergence  still holds for $T^{E,\kappa}_{\varepsilon} \zeta$ in the case we are interested in. Furthermore, we have the following property.
\begin{proposition}[Contraction] \label{L2contraction}
If $f \in L^2(0,\infty)$, then $\Vert T^{E,\kappa}_{\varepsilon}f \Vert_2 \leq \Vert f\Vert_2$ for all $\varepsilon>0$.
\end{proposition}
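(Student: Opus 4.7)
The strategy is the standard Cauchy--Schwarz argument for a self-adjoint sub-Markov kernel. What I need to extract from the explicit formula \eqref{ElasticKernel} for $G^{E,\kappa}_\varepsilon$ are two properties: symmetry $G^{E,\kappa}_\varepsilon(x,y) = G^{E,\kappa}_\varepsilon(y,x)$, and the sub-Markov bound
\[
\phi^{E,\kappa}_\varepsilon(x) \;=\; \int_0^\infty G^{E,\kappa}_\varepsilon(x,y)\,dy \;\leq\; 1 \qquad \text{for every } x\geq 0,\;\varepsilon>0.
\]
Symmetry is immediate from the explicit form, since both $p_\varepsilon(x-y)+p_\varepsilon(x+y)$ and the correction $g^{E,\kappa}_\varepsilon(x,y) = \kappa\exp(\kappa(x+y)+\kappa^2\varepsilon/2)(1-\mathrm{Erf}((x+y+\kappa\varepsilon)/\sqrt{2\varepsilon}))$ are symmetric in $(x,y)$.

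For the sub-Markov bound I would argue probabilistically: $G^{E,\kappa}_\varepsilon(x,y)$ is the transition sub-density of Brownian motion on $[0,\infty)$ started at $x$ and elastically killed at $0$ with parameter $\kappa$, so $\phi^{E,\kappa}_\varepsilon(x)$ is precisely the survival probability up to time $\varepsilon$, hence $\leq 1$. (Alternatively, one can observe directly that $\phi^{E,\kappa}_\varepsilon$ solves the heat equation with initial data $\mathbbm{1}_{[0,\infty)}$ and the Robin condition $\partial_x\phi^{E,\kappa}_\varepsilon(0)=\kappa\phi^{E,\kappa}_\varepsilon(0)$, and apply the maximum principle, since the boundary killing can only decrease the supremum below $1$.)

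Given these two facts, the proof is three lines. For $f\in L^2(0,\infty)$ and $x\geq 0$, Cauchy--Schwarz applied to the measure $G^{E,\kappa}_\varepsilon(x,y)\,dy$ yields
\begin{align*}
\bigl(T^{E,\kappa}_\varepsilon f(x)\bigr)^2
&= \left(\int_0^\infty G^{E,\kappa}_\varepsilon(x,y)\,f(y)\,dy\right)^{\!2}
\\
&\leq \left(\int_0^\infty G^{E,\kappa}_\varepsilon(x,y)\,dy\right)\!\!\left(\int_0^\infty G^{E,\kappa}_\varepsilon(x,y)\,f(y)^2\,dy\right)
\;\leq\; \int_0^\infty G^{E,\kappa}_\varepsilon(x,y)\,f(y)^2\,dy.
\end{align*}
Integrating in $x$, using Fubini (everything non-negative), symmetry, and the sub-Markov bound again in the $x$-variable gives
\[
\Vert T^{E,\kappa}_\varepsilon f \Vert_2^2
\;\leq\; \int_0^\infty f(y)^2 \int_0^\infty G^{E,\kappa}_\varepsilon(x,y)\,dx\,dy
\;=\; \int_0^\infty f(y)^2\,\phi^{E,\kappa}_\varepsilon(y)\,dy
\;\leq\; \Vert f \Vert_2^2,
\]
which is the claim. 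The only non-trivial input is the sub-Markov bound $\phi^{E,\kappa}_\varepsilon\leq 1$; if one wishes to avoid invoking the probabilistic interpretation, it can be proved by hand from \eqref{ElasticKernel} by writing $\phi^{E,\kappa}_\varepsilon(x) = \int_0^\infty G^R_\varepsilon(x,y)\,dy - \int_0^\infty g^{E,\kappa}_\varepsilon(x,y)\,dy$, noting that the first integral equals $1$ (since $G^R_\varepsilon$ is a probability density on $[0,\infty)$) and that $g^{E,\kappa}_\varepsilon\geq 0$, a fact evident from the integral representation used in the proof of Lemma \ref{ElasticCorrectionTermEstimate}.
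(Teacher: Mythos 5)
Your proof is correct and follows essentially the same route as the paper's: Cauchy--Schwarz against the measure $G^{E,\kappa}_\varepsilon(x,y)\,dy$, followed by integration in $x$ using the symmetry and the sub-Markov bound $\int_0^\infty G^{E,\kappa}_\varepsilon(x,y)\,dy\leq 1$. The paper compresses the final step into ``integrating over $x\geq 0$ yields the result,'' whereas you spell out the Fubini/symmetry/sub-Markov details and justify $g^{E,\kappa}_\varepsilon\geq 0$ explicitly, which is a faithful expansion of the same argument.
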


\begin{proof}
Note that $G^{E,\kappa}_{\varepsilon} \geq 0$ and apply the Cauchy-Schwarz inequality to get
\begin{align*}
    \lvert T^{E,\kappa}_{\varepsilon} f(x) \rvert^2 &= \lvert \int_0^{\infty} G_{\varepsilon}^{E,\kappa}(x,y) f(y) dy \rvert^2 \\
    &= \lvert \int_0^{\infty} (G_{\varepsilon}^{E,\kappa}(x,y))^{\frac{1}{2}}(G_{\varepsilon}^{E,\kappa}(x,y))^{\frac{1}{2}} f(y) dy \rvert^2 \\
    & \leq \int_0^{\infty} G_{\varepsilon}^{E,\kappa}(x,y)dy \cdot \int_0^{\infty} G_{\varepsilon}^{E,\kappa}(x,y)f(y)^2dy.
\end{align*}
Integrating over $x \geq 0$ yields the result.
\end{proof}

The space in which we perform the energy estimates is $H^{-1}$, the dual space of $H^1$. The Sobolev space $H^1$ is the space of $L^2$-functions with weak derivative in $L^2$ equipped with the norm
\begin{equation*}
    \norm{f}_{H^1(0,\infty)} \coloneqq \left( \norm{f}^2_{L^2(0,\infty)} + \norm{\partial_x f}^2_{L^2(0,\infty)} \right)^{1/2}.
\end{equation*}
The space $H^{-1}$ is its dual space given by the linear functionals on $H^1$ with norm
\begin{equation*}
    \norm{\zeta}_{-1} \coloneqq \sup_{\norm{\phi}_{H^1(0,\infty)}=1} \lvert \zeta(\phi) \rvert.
\end{equation*}
The following proposition is adapted from \cite{HamblyLedger2017}. It justifies that the empirical measures and their limit points are indeed valued in $H^{-1}$.

\begin{proposition}
Let $\zeta$ be a finite signed measure. Then $\zeta \in H^{-1}$.
\end{proposition}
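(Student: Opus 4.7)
The plan is to show that a finite signed measure $\zeta$ on $[0,\infty)$ induces a bounded linear functional on $H^1(0,\infty)$ by combining two ingredients: the one-dimensional Sobolev embedding $H^1(0,\infty) \hookrightarrow L^\infty(0,\infty)$ and the Jordan decomposition of $\zeta$ into its positive and negative parts.

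First I would establish, or recall, the embedding. For $\phi \in C^\infty_c([0,\infty))$ and $x \geq 0$, the fundamental theorem of calculus gives
\[
\phi(x)^2 = -2\int_x^\infty \phi(y)\phi'(y)\,dy,
\]
which combined with the elementary inequality $2|\phi\phi'| \leq \phi^2 + (\phi')^2$ yields $\|\phi\|_\infty^2 \leq \|\phi\|_{H^1(0,\infty)}^2$. A standard density argument then extends this to all $\phi \in H^1(0,\infty)$, showing that every such $\phi$ admits a continuous representative on $[0,\infty)$ with $\|\phi\|_\infty \leq \|\phi\|_{H^1(0,\infty)}$. In particular, the evaluation $\phi(0)$ and the integral $\int_0^\infty \phi\, d\zeta$ are well-defined.

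Next I would write $\zeta = \zeta^+ - \zeta^-$ by the Jordan decomposition, where $\zeta^\pm$ are finite positive measures on $[0,\infty)$, so $|\zeta|([0,\infty)) = \zeta^+([0,\infty)) + \zeta^-([0,\infty)) < \infty$. Then for any $\phi \in H^1(0,\infty)$,
\[
|\langle \zeta, \phi \rangle| \leq \int_0^\infty |\phi(x)|\, d|\zeta|(x) \leq \|\phi\|_\infty\, |\zeta|([0,\infty)) \leq |\zeta|([0,\infty))\, \|\phi\|_{H^1(0,\infty)}.
\]
Taking the supremum over $\phi$ with $\|\phi\|_{H^1(0,\infty)} = 1$ gives $\|\zeta\|_{-1} \leq |\zeta|([0,\infty)) < \infty$, which is exactly the claim $\zeta \in H^{-1}$.

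There is no real obstacle here; the only subtlety worth noting is that the pointwise evaluation at the boundary $x=0$ needs to make sense, which is guaranteed by the embedding into continuous functions. This justifies applying $\zeta$ to test functions that include a possible Dirac mass at the origin, which is crucial for our treatment of the elastic boundary where sub-probability measures $\nu_t$ may a priori carry mass at $0$.
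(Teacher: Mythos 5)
Your proof is correct, and it is the standard argument: the one-dimensional embedding $H^1(0,\infty)\hookrightarrow C_b([0,\infty))$ with $\Vert\phi\Vert_\infty\leq\Vert\phi\Vert_{H^1(0,\infty)}$ combined with the total variation bound $|\langle\zeta,\phi\rangle|\leq|\zeta|([0,\infty))\Vert\phi\Vert_\infty$. The paper itself gives no proof, deferring to \cite{HamblyLedger2017}, but the same embedding (via Morrey's inequality, including control of $\phi(0)$) is exactly what the paper invokes in the proof of Lemma \ref{H1lemma}, so your route matches the intended one; your closing remark about why duality with $H^1(0,\infty)$ rather than $H^1_0(0,\infty)$ is needed to see a possible atom at the origin is also consistent with the paper's discussion.
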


Another tool we need for the energy estimates is the notion of an anti-derivative operator $\partial_x^{-1}$, which we define as
\begin{equation} \label{antiDeriv}
    \partial_x^{-1} f(x) = - \int_x^{\infty} f(y)dy
\end{equation}
for an integrable function $f: \R \to \R$. Note that we then have $\partial_x \partial_x^{-1} f = f$. We use this operator in the following estimate for the $H^{-1}$-norm.

\begin{lemma}\label{H1lemma}
Let $\zeta  \in  H^{-1}$. Let furthermore the convolution operator $T^{E,\kappa}_{\varepsilon}$ and the anti-derivative $\partial_x^{-1}$ be defined as above. Then there exists a constant $C>0$ such that
\begin{equation} \label{H-1Estimate}
    \norm{\zeta}_{-1} \leq C \liminf_{\varepsilon \to 0} \left(\norm{\partial_x^{-1}T_{\varepsilon}^{E,\kappa}\zeta}_{L^2(0,\infty)}+ \left\lvert \int_0^{\infty}T_{\varepsilon}^{E,\kappa}\zeta(y)dy \right\rvert \right).
\end{equation}
\end{lemma}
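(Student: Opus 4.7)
The plan is to work directly from the dual definition $\norm{\zeta}_{-1}=\sup_{\norm{\phi}_{H^{1}(0,\infty)}\leq 1}\lvert\zeta(\phi)\rvert$ and, for each unit-norm $\phi$ (which by density may be taken smooth and decaying at infinity), to produce the required bound on $\lvert\zeta(\phi)\rvert$. The three ingredients will be: (a) an integration-by-parts identity that naturally introduces $\partial_x^{-1}T^{E,\kappa}_\varepsilon\zeta$ together with the mass $\int_0^\infty T^{E,\kappa}_\varepsilon\zeta\,dy$, (b) the one-dimensional trace bound $\lvert\phi(0)\rvert\leq\sqrt 2\,\norm{\phi}_{H^{1}}$ combined with Cauchy--Schwarz, and (c) passage to the limit $\varepsilon\to 0$ using that the elastic heat semigroup acts on $\phi$ as an approximate identity.

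First, the symmetry of the kernel $G^{E,\kappa}_\varepsilon(x,y)$ in its two arguments combined with Fubini yields
\[
\int_0^\infty T^{E,\kappa}_\varepsilon\zeta(y)\,\phi(y)\,dy \;=\; \langle \zeta,\,T^{E,\kappa}_\varepsilon\phi\rangle .
\]
As $\varepsilon\downarrow 0$, $T^{E,\kappa}_\varepsilon\phi\to\phi$ in $H^{1}(0,\infty)$ -- this is strong continuity at time zero of the semigroup of elastically killed Brownian motion. It can be established by splitting $G^{E,\kappa}_\varepsilon=G^{R}_\varepsilon-g^{E,\kappa}_\varepsilon$, using the classical strong continuity of the Gaussian/reflecting semigroup in $H^{1}$, and observing that the contribution of $g^{E,\kappa}_\varepsilon$ vanishes in $H^{1}$-norm by the pointwise decay of Lemma \ref{ElasticCorrectionTermEstimate}. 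Consequently the left-hand side above tends to $\zeta(\phi)$.

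Next, integration by parts in $y$ against the antiderivative \eqref{antiDeriv} gives
\[
\int_0^\infty T^{E,\kappa}_\varepsilon\zeta(y)\phi(y)\,dy \;=\; \phi(0)\int_0^\infty T^{E,\kappa}_\varepsilon\zeta(y)\,dy \;-\; \int_0^\infty \partial_x^{-1}T^{E,\kappa}_\varepsilon\zeta(y)\,\phi'(y)\,dy,
\]
since $\partial_x^{-1}T^{E,\kappa}_\varepsilon\zeta(+\infty)=0$ by the definition \eqref{antiDeriv}, $\phi(+\infty)=0$ as $\phi\in H^{1}(0,\infty)$, and $\partial_x^{-1}T^{E,\kappa}_\varepsilon\zeta(0)=-\int_0^\infty T^{E,\kappa}_\varepsilon\zeta(y)\,dy$. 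Applying the trace inequality to the boundary term and Cauchy--Schwarz to the bulk term produces
\[
\left\lvert\int_0^\infty T^{E,\kappa}_\varepsilon\zeta(y)\phi(y)\,dy\right\rvert \leq C\,\norm{\phi}_{H^{1}}\left(\norm{\partial_x^{-1}T^{E,\kappa}_\varepsilon\zeta}_{L^{2}(0,\infty)}+\left\lvert\int_0^\infty T^{E,\kappa}_\varepsilon\zeta(y)\,dy\right\rvert\right).
\]
Taking $\liminf_{\varepsilon\to 0}$ on both sides, identifying the limit of the left-hand side with $\zeta(\phi)$ via the first step, and then taking the supremum over unit-norm $\phi$, yields \eqref{H-1Estimate}.

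The main obstacle I anticipate is the clean justification of the $H^{1}$-strong convergence $T^{E,\kappa}_\varepsilon\phi\to\phi$, which is not entirely routine near the boundary because of the elastic correction term. If this proves delicate, a fallback sufficient for every application in the paper is to restrict to $\zeta$ that is a finite signed measure on $[0,\infty)$ and to replace that step by weak convergence of the smoothed measure $T^{E,\kappa}_\varepsilon\zeta\,dy\rightharpoonup\zeta$ tested against continuous bounded $\phi$, which follows from the approximate-identity property of the Gaussian part of $G^{E,\kappa}_\varepsilon$ combined with the exponential decay of $g^{E,\kappa}_\varepsilon$ away from the origin established in Lemma \ref{ElasticCorrectionTermEstimate}.
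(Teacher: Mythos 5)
Your proposal is correct and follows essentially the same route as the paper's proof: the adjoint identity $\langle\zeta,T^{E,\kappa}_{\varepsilon}\phi\rangle=(T^{E,\kappa}_{\varepsilon}\zeta,\phi)_{L^2(0,\infty)}$, integration by parts against $\partial_x^{-1}T^{E,\kappa}_{\varepsilon}\zeta$ (whose value at $0$ is exactly $-\int_0^\infty T^{E,\kappa}_{\varepsilon}\zeta(y)\,dy$), the trace bound $\lvert\phi(0)\rvert\leq C\norm{\phi}_{H^1}$ together with Cauchy--Schwarz, and then $\liminf$ and the supremum over unit-norm $\phi$. You are in fact more explicit than the paper about the one genuinely delicate point, namely justifying $\langle\zeta,T^{E,\kappa}_{\varepsilon}\phi\rangle\to\zeta(\phi)$, which the paper leaves implicit.
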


\begin{proof}
Take a function $\phi \in H^1(0,\infty)$. By integration by parts we have
\begin{align*}
    \langle\zeta,T_{\varepsilon}^{E,\kappa} \phi \rangle = (T_{\varepsilon}^{E,\kappa} \zeta, \phi)_{L^2(0,\infty)} &=  \int_0^{\infty} \partial_x \partial_x^{-1} T^{E,\kappa}_{\varepsilon} \zeta(x) \phi(x) dx  \\
    &= - \partial_x^{-1} T_{\varepsilon}^{E,\kappa} \zeta(0) \phi(0) - \int_0^{\infty} \partial_x^{-1} T_{\varepsilon}^{E,\kappa} \zeta (x) \partial_x \phi(x) dx,
\end{align*}
where the boundary term at infinity vanishes due to the decay $\partial_x^{-1} T_{\varepsilon}^{E,\kappa} \zeta(x) \rightarrow 0$, as $x\rightarrow \infty$, and Morrey's inequality \cite{evans}, which gives $\norm{\phi}_{\infty} <\infty$. Furthermore, Morrey's inequality also gives $\phi(0)\leq C \norm{\phi}_{H^1}$, for a universal constant $C>0$, and hence
\begin{align*}
    \lvert \zeta(\phi) \rvert & \leq \liminf_{\varepsilon \to 0}\left(C \lvert \partial_x^{-1}T_{\varepsilon}^{E,\kappa}\zeta(0)\rvert \norm{\phi}_{H^1}  + \norm{\partial_x^{-1}T_{\varepsilon}^{E,\kappa}\zeta}_2 \norm{\phi}_{H^1}\right).
\end{align*}
Taking the supremum over all $\phi$ with $\norm{\phi}_{H^1}=1$ yields the result.
\end{proof}

\printbibliography

\noindent
This research has been supported by the EPSRC Centre for Doctoral Training in Mathematics of Random Systems: Analysis, Modelling and Simulation (EP/S023925/1).

\end{document}